\documentclass[reqno,12pt]{amsart}
\usepackage{times}
\usepackage{amsmath,amsfonts,amsthm,amscd,amssymb,graphicx,enumerate}
\usepackage{graphicx}

\makeatletter
\def\@strippedMR{}
\def\@scanforMR#1#2#3\endscan{
  \ifx#1M\ifx#2R\def\@strippedMR{#3}
  \else\def\@strippedMR{#1#2#3}
  \fi\fi}
\renewcommand\MR[1]{\relax\ifhmode\unskip\spacefactor3000 \space\fi
  \@scanforMR#1\endscan
  MR\MRhref{\@strippedMR}{\@strippedMR}}
\makeatother

\addtolength{\textwidth}{+4cm}
\addtolength{\textheight}{+2cm}
\hoffset-2cm
\voffset-1cm
\hfuzz2pt
\vfuzz2pt

\parindent=0in
\parskip=\medskipamount

\newtheorem*{Thm*}{Theorem}
\newtheorem{Thm}{Theorem}[section]
\newtheorem{Cor}[Thm]{Corollary}
\newtheorem{Prop}[Thm]{Proposition}
\newtheorem{Lemma}[Thm]{Lemma}

\newtheorem{theorem}[Thm]{Theorem}
\newtheorem{Corollary}[Thm]{Corollary}
\newtheorem{Proposition}[Thm]{Proposition}

\theoremstyle{definition}

\newtheorem{Notation}[Thm]{Notation}

\newtheorem{Remark}[Thm]{Remark}
\newtheorem{Example}[Thm]{Example}

\newcommand{\mf}[1]{\mathbb{#1}}
\newcommand{\mc}[1]{\mathcal{#1}}

\DeclareMathOperator{\NC}{\mathcal{NC}}

\DeclareMathOperator{\Inner}{\mathrm{Inn}}
\DeclareMathOperator{\Outer}{\mathrm{Out}}

\newcommand{\norm}[1]{\left\Vert#1\right\Vert}
\newcommand{\abs}[1]{\left\vert#1\right\vert}

\newcommand{\set}[1]{\left\{#1\right\}}
\newcommand{\ip}[2]{\left \langle #1, #2 \right \rangle}

\allowdisplaybreaks[1]

\title{Operator-valued Jacobi parameters and examples of operator-valued distributions}
\author{Michael Anshelevich}
\thanks{The first author was supported in part by NSF grant DMS-1160849.}
\address{Department of Mathematics, Texas A\&M University, College Station, TX 77843-3368}
\email{manshel@math.tamu.edu}
\author{John D. Williams}
\address{Universit{\"a}t des Saarlandes, Fachrichtung Mathematik.  Postfach 151150, 66041. Saarbr{\"u}cken}
\email{williams@math.uni-sb.de}
\subjclass[2010]{Primary 46L54; Secondary 42C05}
\date{\today}

\begin{document}

\begin{abstract}
In the setting of distributions taking values in a $C^\ast$-algebra $\mathcal{B}$, we define generalized Jacobi parameters and study distributions they generate. These include numerous known examples and one new family, of $\mathcal{B}$-valued free binomial distributions, for which we are able to compute free convolution powers.
Moreover, we develop a convenient combinatorial method for calculating the joint distributions of $\mathcal{B}$-free random variables with Jacobi parameters, utilizing two-color non-crossing partitions. This leads to several new explicit examples of free convolution computations in the operator-valued setting. Additionally, we obtain a counting algorithm for the number of two-color non-crossing pairings of relative finite depth, using only free probabilistic techniques. Finally, we show that the class of distributions with Jacobi parameters is not closed under free convolution.
\end{abstract}

\maketitle
\section{Introduction}

Let $\mu$ denote a probability measure on $\mathbb{R}$ all of whose moments are finite.  Then $\mu$ is associated to two numerical sequences
$\set{(\lambda_{i})_{i = 1}^\infty, (\alpha_{i})_{i =1}^\infty}$ where $\lambda_{i} , \alpha_{i} \in \mathbb{R}$ and $\alpha_{i} \geq 0$ for all $i \in \mathbb{N}$, the so-called \textit{Jacobi parameters} (see \cite{Chihara-book} for an overview).
The moments of the measure $\mu$ are calculated from these parameters using sums over Motzkin paths or non-crossing partitions, and $\mu$ has a moment generating function with the continued fraction expansion
\begin{equation}
\label{Continued-fraction}
M_{\mu}(z) = \frac{1}{1 - \lambda_{1}z - \frac{\alpha_{1}z^{2}}{1-\lambda_{2} - \frac{\alpha_{2}z^{2}}{\cdots}} }.
\end{equation}
See Section~\ref{Section:Scalar} for more details. Recall that $\mu$ can also be considered as a (positive-definite) linear functional on the algebra of polynomials $\mf{C}[X]$.

The study of $\mathcal{B}$-valued probability was initiated by Voiculescu in \cite{Voiculescu1}.
Let $\mathcal{B}$ denote a unital C$^{\ast}$-algebra and $X$ a self-adjoint symbol.  We define the \textit{non-commutative polynomials}
to be the algebraic free product of $\mathcal{B}$ and $X$.
Probability measures are replaced by non-commutative distributions, which are completely positive, $\mathcal{B}$-bimodular maps
$$ \mu : \mathcal{B}\langle X \rangle \mapsto \mathcal{B}.$$

When provided with appropriate notions of boundedness, $\mathcal{B}$-valued distributions may be realized in \textit{$\mc{B}$-valued probability spaces}, which are triples $(\mathcal{A}, E, \mathcal{B})$ with $\mathcal{B} \subset \mathcal{A}$ a unital containment of $C^{\ast}$ algebras and $E : \mathcal{A} \rightarrow \mathcal{B}$ a conditional expectation.  We say that random variables $X_{1} , X_{2} \in \mathcal{A}$ are $\mathcal{B}$-free if
$$E(P_{1}(X_{i_{1}}) P_{2}(X_{i_{2}}) \cdots P_{n}(X_{i_{n}}) ) = 0$$
whenever the $P_{j}(X) \in \mathcal{B}\langle X \rangle$ satisfy $E(P_{j}(X_{i_{j}})) = 0$ for all $j=1 , \ldots , n$ and $i_{1} \neq i_{2}, i_{2} \neq i_{3}, \cdots , i_{n-1} \neq i_{n}$.
If $X_{1}$ has distribution $\mu$ (that is $\mu(P(X)) = E(P(X_{1}))$ for all $P(X) \in \mathcal{B}\langle X \rangle$) and $X_{2}$ has distribution $\nu$
then we call the \textit{free convolution} of $\mu$ and $\nu$ the distribution of the random variable $X_{1} + X_{2}$.  In  symbols, this distribution is denoted $\mu \boxplus \nu$.

The goal of this article is to define $\mc{B}$-valued distributions associated to Jacobi parameters, and to study their properties. Here
in the $\mathcal{B}$-valued setting, Jacobi parameters will be pairs of sequences $\set{(\lambda_{i})_{i=1}^\infty, (\alpha_{i})_{i=1}^\infty}$
where $\lambda_{i} \in \mathcal{B}$ are self-adjoint elements and $\alpha_{i} \in \mathcal{CP(B)}$ are completely positive self maps of $\mathcal{B}$.

In Proposition \ref{Prop:Jacobi-definition}, we show that each such pair of sequences generates a $\mathcal{B}$-valued distribution, which we call a Jacobi-Szeg\H{o} distribution (to distinguish it from several other types of distributions named after Jacobi).
In Proposition \ref{Prop:Shift}, we recover the analog of the continued-fraction expansion \eqref{Continued-fraction}, and extend to this setting some familiar results for scalar-valued Jacobi parameters.
The remainder of section \ref{section2} is dedicated to constructing free Meixner distributions from Jacobi parameters.  This class contains most of the ``named'' distributions such as the semicircular, Bernoulli and free Poisson distributions. The key result is that free convolution powers of free Meixner distributions again belong to this class. This allows us to compute explicitly the $\mc{B}$-valued free binomial distributions, the free convolution powers of general $\mc{B}$-valued Bernoulli distributions. This computation is made even more explicit in the particular case treated in Proposition \ref{Bernoulli}.  This is notable as there are at present very few explicit computations of such convolutions in the literature.

In Section \ref{section3}, we show that the joint distribution of $\mc{B}$-free random variables, each of which has a Jacobi-Szeg\H{o} distribution, has a remarkably simple combinatorial structure based on certain subsets of $2$-color non-crossing partitions $\mathcal{TCNC}_{1,2}(n)$ (see Theorem \ref{maintheorem}). This is surprising since there is in general no direct relation between Jacobi parameters and freeness. In fact, our formula is new even in the scalar valued case (but see Example 2 in \cite{Ans-Product}, and \cite{Mlotkowski-Cumulants-Jacobi}, for related results).
Moreover, in Section \ref{section4}, we consider what is, in some sense, the class of atomic $\mathcal{B}$-valued distributions.  The formulas for the free convolution of two such distributions reduce to considering certain subsets $\mathcal{TCNC}_{1,2}^{k,\ell}(n) \subset \mathcal{TCNC}_{1,2}(n)$ where $k$ and $\ell$ refer to a constraint on the types of nesting that can occur in these pairings. Conversely, we obtain a recursive formula for the size of the sets $\mathcal{TCNC}_2^{k,k}(n) $ using free probabilistic methodology. It would be interesting to find a direct combinatorial argument for counting these sets.

Section \ref{section5} is concerned with  the consequences of these theorems. In Example \ref{simplest}, the simplest possible strictly $\mathcal{B}$-valued convolution operation is performed explicitly, through the Cauchy transform rather than combinatorial technology.  Lastly, Example \ref{counterexample} shows that the convolution of two $\mc{B}$-valued Bernoulli distributions is not in general a Jacobi-Szeg\H{o} distribution, providing a negative answer to a question posed by Roland Speicher.

\textbf{Acknowledgements.} The first author would like to thank all the co-authors of \cite{Ans-Bel-Fev-Nica}; a number of results in this article are a follow-up to our discussions. The authors would also like to thank Roland Speicher for numerous questions about operator-valued Jacobi parameters. Comments by a referee resulted in substantial improvements to the article, and are greatly appreciated.

\section{Preliminaries and scalar Jacobi parameters}
\label{Section:Scalar}

\subsection{Combinatorial preliminaries}

Let $\mathcal{NC}(n)$ be the collection of non-crossing partitions of the set $\set{1, 2, \ldots, n}$, and $\mathcal{NC}_{1,2}(n)$ the sub-collection of partitions whose blocks are singletons or pairs. For $\pi \in \NC(n)$, there is a natural partial order on its blocks: if $U, V \in \pi$, $U \succeq V$ if for some $a, b \in U$ and all $c \in V$, $a \leq c \leq b$. We say that $U$ covers $V$ if there is no $W \in \pi$ with $U \succ W \succ V$. The \emph{depth} of a block $V$ in $\pi$ is
\[
d(V, \pi) = \abs{\set{U \in \pi : U \succeq V}}.
\]
A block is outer if its depth is $1$, otherwise it is inner.

\subsection{Preliminaries on operator-valued distributions}

We briefly summarize the notions to be used throughout the paper. See, for example, \cite{Ans-Bel-Fev-Nica} for more details. Let $\mc{B}$ be a unital $C^\ast$-algebra. Denote by $\mc{CP}(\mc{B})$ the completely positive maps on $\mc{B}$, and by $\mc{B} \langle X \rangle$ the non-commutative polynomials with coefficients in $\mc{B}$. A \emph{non-commutative distribution} is a map $\mu : \mc{B} \langle X \rangle \rightarrow \mc{B}$ satisfying conditions (a,b) from
\begin{enumerate}
\item
$\mu$ is a $\mf{C}$-linear, unital, $\mc{B}$-bimodule map.
\item
$\mu$ is completely positive.
\item
$\mu$ is exponentially bounded, that is, there is a constant $M$ such that for any $b_0, b_1, \ldots, b_n \in \mc{B}$,
\[
\norm{\mu[b_0 X b_1 X \ldots X b_n]} \leq M^n \norm{b_0} \norm{b_1} \ldots \norm{b_n}.
\]
\end{enumerate}
We will denote the space of all non-commutative distributions by $\Sigma(\mc{B})$. The smaller set of exponentially bounded distributions, those satisfying (a-c), will be denoted by $\Sigma^0(\mc{B})$. Finally, algebraic non-commutative distributions are those satisfying only condition (a) above, and will be denoted by $\Sigma_{alg}(\mc{B})$. For $\mc{B} = \mf{C}$, these three classes correspond to, respectively, positive linear functionals on $\mf{C}[x]$, compactly supported measures on $\mf{R}$, and all linear functionals on $\mf{C}[x]$.

Let $(\mc{A}, E, \mc{B})$ be a non-commutative probability space, that is, $\mc{A}$ is a $C^\ast$-algebra containing $\mc{B}$ and $E : \mc{A} \rightarrow \mc{B}$ is a conditional expectation. Let $a \in \mc{A}$ be self-adjoint. Then \cite{Popa-Vinnikov-NC-functions}
\[
\mu[b_0 X b_1 X \ldots X b_n] = E[b_0 a b_1 a \ldots a b_n]
\]
is an exponentially bounded non-commutative distribution, and every such distribution arises in this way. The following proposition is also likely well-known, see \cite{Ans-Bel-Fev-Nica,SpeHab}, etc., but we do not have a precise reference.

\begin{Prop}
\label{Prop:Algebraic-distr}
Let $\ip{\cdot}{\cdot}$ be a (possibly degenerate) positive semi-definite $\mc{B}$-valued inner product on $\mc{B} \langle X \rangle$, and $x$ an operator on the corresponding pre-Hilbert bimodule symmetric with respect to this inner product, in the sense that for any $P, Q \in \mc{B} \langle X \rangle$, $\ip{x P}{Q} = \ip{P}{x Q}$. Then
\[
\mu[b_0 X b_1 X \ldots X b_n] = \ip{1_{\mc{B}}}{(b_0 x b_1 x \ldots x b_n) 1_{\mc{B}}}
\]
is a non-commutative distribution, and every such distribution arises in this way.
\end{Prop}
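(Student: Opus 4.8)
The plan is to prove the two assertions separately: first, that any such pair $(\ip{\cdot}{\cdot}, x)$ produces a non-commutative distribution in the sense of conditions (a,b); second, that every non-commutative distribution arises this way via a GNS-type construction. For the first direction, I would define $\mu$ by the stated formula and immediately observe that, since $\ip{\cdot}{\cdot}$ is a $\mc{B}$-bimodule map in each variable and $x$ is $\mc{B}$-linear (as an operator on the bimodule), $\mu$ is $\mf{C}$-linear and $\mc{B}$-bimodular; unitality $\mu[1_\mc{B}] = \ip{1_\mc{B}}{1_\mc{B}}$ holds after normalizing the inner product (or is part of the hypothesis, up to identifying $\ip{1_\mc{B}}{1_\mc{B}}$ with $1_\mc{B}$). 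The substantive point is complete positivity. Here I would use the standard characterization: given $b_1, \ldots, b_m \in \mc{B}\langle X\rangle$ (or rather, given $\mc{B}$-coefficients assembled into polynomials), the matrix $\big(\mu[P_i^\ast P_j]\big)_{i,j}$ must be positive in $M_m(\mc{B})$. Writing $P_i^\ast P_j$ in terms of $x$ and using symmetry of $x$, one gets $\mu[P_i^\ast P_j] = \ip{P_i(x)1_\mc{B}}{P_j(x)1_\mc{B}}$, so the matrix in question is a Gram matrix for the $\mc{B}$-valued inner product, hence completely positive by positive semi-definiteness of $\ip{\cdot}{\cdot}$. The matrix amplification is handled identically with entries in $M_m(\mc{B})$.

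For the converse, given a non-commutative distribution $\mu \in \Sigma(\mc{B})$, I would run the $\mc{B}$-valued GNS construction on $\mc{B}\langle X\rangle$: define $\ip{P}{Q} := \mu[P^\ast Q]$, which is a positive semi-definite $\mc{B}$-valued inner product precisely because $\mu$ is completely positive and $\mc{B}$-bimodular (positivity of all the Gram matrices $(\mu[P_i^\ast P_j])$ is equivalent to complete positivity of $\mu$). Let $x$ be the operator of left multiplication by $X$ on $\mc{B}\langle X\rangle$; since $X = X^\ast$ as a symbol, $\ip{xP}{Q} = \mu[P^\ast X Q] = \ip{P}{xQ}$, so $x$ is symmetric. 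One then checks $\ip{1_\mc{B}}{(b_0 x b_1 x \cdots x b_n)1_\mc{B}} = \mu[b_0 X b_1 X \cdots X b_n]$ directly, since the vector $(b_0 x b_1 x \cdots x b_n)1_\mc{B}$ is literally the polynomial $b_0 X b_1 X \cdots X b_n \in \mc{B}\langle X\rangle$, and applying $\ip{1_\mc{B}}{\cdot}$ recovers $\mu$ of that polynomial. This closes the loop.

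The main obstacle, and the one deserving the most care, is getting the complete positivity bookkeeping right in the $\mc{B}$-valued setting: one must be precise that "completely positive" for $\mu$ means positivity of $\big(\mu[P_i^\ast P_j]\big)_{i,j} \in M_m(\mc{B})$ for all finite families, and correspondingly that "positive semi-definite $\mc{B}$-valued inner product" means positivity of all such Gram matrices — so that the two notions are tautologically matched once the formula $\mu[P^\ast Q] = \ip{P(x)1_\mc{B}}{Q(x)1_\mc{B}}$ is established. A secondary point is the possible degeneracy of $\ip{\cdot}{\cdot}$: one works on the pre-Hilbert bimodule before quotienting by the kernel (or quotients and checks $x$ descends), which is why the proposition explicitly allows a degenerate inner product and only asks for a "pre-Hilbert bimodule." No boundedness or exponential bound is claimed, consistent with the target class being $\Sigma(\mc{B})$ rather than $\Sigma^0(\mc{B})$, so no analytic estimates are needed — the entire argument is algebraic plus the positivity identity.
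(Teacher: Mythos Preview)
Your proposal is correct and follows essentially the same line as the paper. The forward direction is identical: both verify complete positivity via the identity $\mu[P_i^\ast P_j] = \ip{P_i(x)1_{\mc{B}}}{P_j(x)1_{\mc{B}}}$ (using symmetry of $x$), which exhibits the matrix $\bigl(\mu[P_i^\ast P_j]\bigr)_{i,j}$ as a Gram matrix.

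For the converse there is a small but genuine difference worth noting. You take the standard GNS inner product $\ip{P}{Q} = \mu[P^\ast Q]$, whereas the paper defines a \emph{graded} inner product, inserting a $\delta_{nk}$ on the $X$-degrees so that monomials of different degrees are orthogonal. The paper's choice is motivated by the Fock-space inner product used immediately afterward in Proposition~\ref{Prop:Jacobi-definition}, but for the bare statement of Proposition~\ref{Prop:Algebraic-distr} your ungraded choice is cleaner: with it, left multiplication by $X$ is manifestly symmetric and $\ip{1_{\mc{B}}}{b_0 X b_1 \cdots X b_n} = \mu[b_0 X b_1 \cdots X b_n]$ is immediate. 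With the graded inner product these verifications are more delicate (indeed, the vector $b_0 X \cdots X b_n$ has degree $n$ and is literally orthogonal to $1_{\mc{B}}$ for $n>0$, so one has to interpret the reconstruction formula with some care). Your route avoids that subtlety entirely.
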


\begin{proof}
Bimodularity of $\mu$ follows from the definition of a $\mc{B}$-valued inner product. Complete positivity follows by observing that for any non-commutative polynomials $\set{P_i}_{i=1}^n$,
\begin{equation}
\label{Positivity}
\sum_{i, j=1}^n b_i^\ast \mu[P_i^\ast(X) P_j(X)] b_j = \ip{\sum_i P_i(x) b_i}{\sum_j P_j(x) b_j} \geq 0,
\end{equation}
where we have used the symmetry of $x$. For the converse, suppose $\mu$ is a non-commutative distribution. On $\mc{B} \langle X \rangle$, define the $\mc{B}$-valued inner product by a $\mf{C}$-linear extension of
\[
\ip{b_0 X b_1 X \ldots X b_n}{c_0 X c_1 X \ldots X c_k}
= \delta_{nk} \mu[b_n^\ast X \ldots X b_1^\ast X b_0^\ast c_0 X c_1 X \ldots X c_k].
\]
The $\mc{B}$-valued inner product property
\[
\ip{b P b'}{Q b''} = (b')^\ast \ip{P}{b^\ast Q} b''
\]
follows from bimodularity of $\mu$. Positive semi-definiteness follows by reversing equation~\eqref{Positivity}. If we take $x$ to be the operator of multiplication by $X$, its symmetry also follows directly from the definition.
\end{proof}

\begin{Notation}
For $\mu \in \Sigma(\mc{B})$, define its the moment generating function, considered as a formal power series, by
\[
M_\mu(b) = \sum_{n=0}^\infty \mu \left[ (X b)^n \right],
\]
The Cauchy transform of $\mu$ is
\[
G_\mu(b) = b^{-1} M_\mu(b^{-1}).
\]
For $\mu \in \Sigma^0(\mc{B})$ with bound $M$, $G_\mu$ can be identified with a non-commutative analytic function, and $M_\mu(b)$ is a convergent series for $\norm{b} \leq M^{-1}$. The free cumulant generating function of $\mu$ is defined, as a formal power series, implicitly via
\begin{equation}
\label{Free-cumulant-GF}
M_\mu(b) = 1_{\mc{B}} + R_\mu(b M_\mu(b))
\end{equation}
(compare with Corollary 5.4 in \cite{Popa-Vinnikov-NC-functions}). Occasionally, we will also use the Boolean cumulant generating function
\begin{equation}
\label{Boolean}
B_\mu(b) = 1 - (M_\mu(b))^{-1},
\end{equation}
and Boolean convolution, which satisfies
\begin{equation}
\label{Boolean-conv}
B_{\mu \uplus \nu}(b) = B_\mu(b) + B_\nu(b),
\end{equation}
and can be defined using the fully matricial version of this identity (see Proposition~\ref{Prop:Fully-matricial}). A key result in \cite{Ans-Bel-Fev-Nica} is that for any $\eta \in \mc{CP}(\mc{B})$, one can define Boolean and free convolution powers $\mu^{\uplus \eta}$ and $\mu^{\boxplus \eta}$.
\end{Notation}

\subsection{Scalar Jacobi parameters}

We recall the following fundamental theorem. See \cite{Flajolet,Viennot-Notes,AccBozGaussianization} for details, and further possible equivalences.

\begin{Thm}
\label{Thm;Favard}
Consider two sequences $\set{(\lambda_{i} \in \mf{R})_{i=1}^\infty, (\alpha_{i} > 0)_{i=1}^\infty}$, and a probability measure $\mu$ on $\mf{R}$ all of whose moments are finite. The following are equivalent.
\begin{enumerate}
\item
The moment generating function of $\mu$ has a continued fraction expansion
\[
M(z) = \sum_{n=0}^\infty z^n \mu[x^n] =
\dfrac{1}{1 - \lambda_1 z -
\dfrac{\alpha_1 z^2}{1 - \lambda_2 z -
\dfrac{\alpha_2 z^2}{1 - \ldots}}}.
\]
\item
$\mu$ is the distribution of the tridiagonal matrix
\[
\begin{pmatrix}
\lambda_1 & \alpha_1 & 0 & 0 & \ldots \\
1 & \lambda_2 & \alpha_2 & 0 & \ldots \\
0 & 1 & \lambda_3 & \alpha_3 & \ldots \\
0 & 0 & 1 & \lambda_4 & \ddots \\
\vdots & \vdots & \ddots & \ddots & \ddots
\end{pmatrix}
\]
with respect to the vector state corresponding to the top left entry of the matrix.
\item
For any $n$
\begin{equation*}
\mu[x^n] = \sum_{\pi \in \mathcal{NC}_{1,2}(n)}
\prod_{\substack{V \in \pi \\ |V| = 1}} \lambda_{d(V,\pi)} \cdot
\prod_{\substack{V \in \pi \\ |V|=2}} \alpha_{d(V,\sigma)}.
\end{equation*}
\end{enumerate}
In addition, finite sequences of the form $\set{(\lambda_{i} \in \mf{R})_{i=1}^{k+1}, (\alpha_{i} > 0)_{i=1}^k}$ for some $k$ correspond to finitely supported measures, terminating continued fractions, finite tridiagonal matrices, and sums over partitions $\mathcal{NC}^k_{1,2}(n)$ defined in Section~\ref{section4}.
\end{Thm}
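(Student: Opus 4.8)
The plan is to observe that all three objects attached to the given pair of sequences in (1)--(3)---the continued fraction; the moment generating function $z\mapsto\sum_{n\ge0}z^{n}\langle T^{n}e_{1},e_{1}\rangle$ of the tridiagonal matrix $T$ of (2) at its top-left basis vector $e_{1}$; and the partition sum $z\mapsto\sum_{n\ge0}z^{n}\sum_{\pi\in\mathcal{NC}_{1,2}(n)}\prod_{|V|=1}\lambda_{d(V,\pi)}\prod_{|V|=2}\alpha_{d(V,\pi)}$---satisfy, as formal power series in $z$, one and the same recursive functional equation
\[
F(z)=\frac{1}{1-\lambda_{1}z-\alpha_{1}z^{2}\,F^{(1)}(z)},
\]
where $F^{(1)}$ denotes the analogous object built from the shifted sequences $(\lambda_{i+1})_{i\ge1}$ and $(\alpha_{i+1})_{i\ge1}$. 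Iterating this identity determines the coefficient of each $z^{n}$ from the sequences alone, so the equation has a unique formal power series solution; hence the three objects coincide, and since each depends on $\mu$ only through the numbers $\mu[x^{n}]$, conditions (1)--(3) are equivalent. I would carry out the whole argument at the level of formal power series (equivalently, of moment sequences), which sidesteps all moment-problem and convergence subtleties; in the bounded situation---in particular in the finite case of the addendum---the same identities hold between genuine analytic functions.

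For (1) the functional equation is nothing but the self-similar form of the continued fraction. For (2) one writes $T=\left(\begin{smallmatrix}\lambda_{1}&r\\ c&T^{(1)}\end{smallmatrix}\right)$, splitting off the first row and column, where $r=(\alpha_{1},0,0,\dots)$, $c=(1,0,0,\dots)^{t}$, and $T^{(1)}$ is the tridiagonal matrix of the shifted sequences; the Schur-complement formula for the first diagonal entry of $(I-zT)^{-1}$ then reads
\[
\langle(I-zT)^{-1}e_{1},e_{1}\rangle=\Bigl(1-\lambda_{1}z-z^{2}\,r\,(I-zT^{(1)})^{-1}c\Bigr)^{-1}=\Bigl(1-\lambda_{1}z-\alpha_{1}z^{2}\langle(I-zT^{(1)})^{-1}e_{2},e_{2}\rangle\Bigr)^{-1},
\]
since the only surviving contribution of the corner blocks is the product of the lone above-diagonal entry $\alpha_{1}$ with the lone below-diagonal entry $1$. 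One point to address here is that the displayed matrix is not self-adjoint: to make sense of ``the distribution with respect to the vector state'' as a bona fide (spectral) measure one first conjugates $T$ by $\mathrm{diag}(1,\sqrt{\alpha_{1}},\sqrt{\alpha_{1}\alpha_{2}},\dots)$---well defined because $\alpha_{i}>0$---obtaining a symmetric Jacobi matrix with the same $e_{1}$-moments; alternatively one reads $\mu[x^{n}]=\langle T^{n}e_{1},e_{1}\rangle$ simply as the weighted count of length-$n$ lattice walks on $\{1,2,\dots\}$ from $1$ to $1$.

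For (3) I would classify $\pi\in\mathcal{NC}_{1,2}(n)$ by the block $V$ containing the element $1$. If $V=\{1\}$ then $V$ is outer and contributes $\lambda_{1}$, while $\pi\setminus\{V\}$ ranges over $\mathcal{NC}_{1,2}$ of $\{2,\dots,n\}$ with every block keeping its depth, giving the term $\lambda_{1}zS(z)$. If $V=\{1,k\}$ with $2\le k\le n$ then $V$ is outer and contributes $\alpha_{1}$, every remaining block lies entirely inside $\{2,\dots,k-1\}$ or entirely inside $\{k+1,\dots,n\}$, the blocks inside the arc have their depth raised by exactly one (hence are weighted by the shifted parameters) while those outside keep their depth; summing over $k$ gives $\alpha_{1}z^{2}S^{(1)}(z)S(z)$. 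Together these identities say $S(z)=1+\lambda_{1}zS(z)+\alpha_{1}z^{2}S^{(1)}(z)S(z)$, which is the functional equation above. The only delicate bookkeeping is the behaviour of depths under this decomposition---that nesting inside the outer pair $\{1,k\}$ increments the depth of each interior block by one and changes nothing else---and this is immediate from the definition of $d(\cdot,\cdot)$. I expect this depth bookkeeping, together with keeping the formal-versus-analytic and the non-self-adjointness points honest, to be the only real work; there is no deep obstacle.

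Finally, the addendum follows by running the same three arguments for the finite sequences $\{(\lambda_{i})_{i=1}^{k+1},(\alpha_{i})_{i=1}^{k}\}$: the recursion now terminates after $k$ steps with $F^{(k)}(z)=(1-\lambda_{k+1}z)^{-1}$, so the continued fraction is finite; the matrix $T$ is $(k+1)\times(k+1)$ and Hermitian after the diagonal conjugation above, so its $e_{1}$-distribution is the (finitely supported) spectral measure of a $(k+1)\times(k+1)$ Hermitian matrix; and on the combinatorial side the truncation of the recursion is exactly the passage from $\mathcal{NC}_{1,2}(n)$ to the subclass $\mathcal{NC}^{k}_{1,2}(n)$ of Section~\ref{section4}, on which the weights involve only the indices actually present.
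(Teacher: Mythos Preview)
The paper does not actually prove this theorem: it is stated as a recalled classical result, with the reader directed to \cite{Flajolet,Viennot-Notes,AccBozGaussianization} for details. So there is no ``paper's own proof'' to compare against.

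Your proposal is correct and is essentially the standard argument one finds in those references, organized cleanly around the single observation that all three objects obey the same coefficient-stripping recursion $F(z)=\bigl(1-\lambda_{1}z-\alpha_{1}z^{2}F^{(1)}(z)\bigr)^{-1}$. The three verifications---self-similarity of the continued fraction, the Schur complement for the $(1,1)$ entry of the resolvent, and the first-element decomposition of $\mathcal{NC}_{1,2}(n)$ with the attendant depth shift---are all valid, and the uniqueness of the formal power series solution is immediate since the $n$th coefficient of $F$ is determined by the first $n-1$ coefficients of $F$ and $F^{(1)}$. Your handling of the non-self-adjointness of the displayed tridiagonal matrix via diagonal conjugation (or, equivalently, via the Motzkin-path interpretation) is exactly the right fix, and your treatment of the finite case is correct: truncation of the recursion after $k$ steps is precisely the restriction to $\mathcal{NC}^{k}_{1,2}(n)$, and the resulting $(k{+}1)\times(k{+}1)$ Hermitian matrix has finitely supported spectral measure at $e_{1}$.

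One minor stylistic point: in the Schur-complement display you write $\langle(I-zT^{(1)})^{-1}e_{2},e_{2}\rangle$, but after deleting the first row and column the top-left entry of the remaining block is indexed by the \emph{first} basis vector of the quotient space; writing $e_{1}$ (or $e_{1}^{(1)}$) there would be clearer. This is notational, not mathematical.
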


\section{Jacobi parameters and continued fraction expansions}\label{section2}

\begin{Prop}
\label{Prop:Jacobi-definition}
Let $\set{\lambda_{i} \in \mc{B}}_{i=1}^\infty$ be self-adjoint, and $\set{\alpha_{i} \in \mc{CP}(\mc{B})}_{i=1}^\infty$. On the vector space $\mc{B} \langle X \rangle$, define the $\mc{B}$-valued inner product
\begin{equation}
\label{Inner-product}
\ip{b_0 X b_1 X \ldots X b_n}{c_0 X c_1 X \ldots X c_k}
= \delta_{nk} b_n^\ast \alpha_1 \Bigl[b_{n-1}^\ast \alpha_2 \bigl[\ldots \alpha_n [b_0^\ast c_0] c_1 \bigr] \ldots c_{n-1}\Bigr] c_n,
\end{equation}
in particular $\ip{b}{c} = b^\ast c$. This inner product may be degenerate, but we will only use it to compute moments. On the induced pre-Hilbert bimodule, define operators
\[
a^\ast(b_0 X b_1 X \ldots X b_n) = X b_0 X b_1 X \ldots X b_n,
\]
\[
p(b_0 X b_1 X \ldots X b_n) = \lambda_n b_0 X b_1 X \ldots X b_n,
\]
\[
a(b_0 X b_1 X \ldots X b_n) = \alpha_n[b_0] b_1 X \ldots X b_n,
\]
$a(b) = 0$, and
\[
x = a^\ast + p + a.
\]
Then $p$ and $a^\ast + a$, and so also $x$, are symmetric. Therefore $\mu : \mc{B} \langle X \rangle \rightarrow \mc{B}$ defined as in Proposition~\ref{Prop:Algebraic-distr}
is a non-commutative distribution.

We denote
\begin{equation}
\label{Jacobi-parameters}
\mu = J
\begin{pmatrix}
\lambda_1, & \lambda_2, & \lambda_3, & \lambda_4, & \ldots \\
\alpha_1, & \alpha_2, & \alpha_3, & \alpha_4, & \ldots
\end{pmatrix}
\end{equation}
and refer to it as the \emph{Jacobi-Szeg\H{o} distribution} with Jacobi parameters $\set{(\lambda_i)_{i=1}^\infty, (\alpha_i)_{i=1}^\infty}$.
\end{Prop}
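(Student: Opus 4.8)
The proof amounts to verifying the hypotheses of Proposition~\ref{Prop:Algebraic-distr}: that \eqref{Inner-product} defines a positive semi-definite $\mc{B}$-valued inner product on $\mc{B}\langle X\rangle$, that $p$, $a$, $a^\ast$ (hence $x = a^\ast + p + a$) descend to the induced pre-Hilbert $\mc{B}$-bimodule, and that $x$ is symmetric there. Once these are in hand, Proposition~\ref{Prop:Algebraic-distr} produces the claimed non-commutative distribution immediately.

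First I would record that \eqref{Inner-product} is a $\mc{B}$-valued inner product in the required sense. Reading off the formula, left- (resp.\ right-) multiplying a coefficient of the first argument by $b \in \mc{B}$ inserts $b^\ast$ in the innermost slot (resp.\ $b^\ast$ read as $b$ in the outermost slot), which is exactly $\ip{bPb'}{Qb''} = (b')^\ast\ip{P}{b^\ast Q}b''$, while $\ip{P}{Q}^\ast = \ip{Q}{P}$ is immediate because the $\alpha_i$ are $\ast$-preserving. Since distinct homogeneous degrees are orthogonal, positive semi-definiteness — in fact its matrix amplifications, which is the natural inductive hypothesis and is precisely what feeds the complete-positivity step \eqref{Positivity} — reduces to each homogeneous component. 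The key point is the \emph{peeling identity}: for all coefficients,
\[
\ip{b_0 X \cdots X b_{n+1}}{c_0 X \cdots X c_{n+1}}_{(\alpha_1,\alpha_2,\dots,\alpha_{n+1})}
= b_{n+1}^\ast\, \alpha_1[\,\ip{b_0 X \cdots X b_n}{c_0 X \cdots X c_n}_{(\alpha_2,\dots,\alpha_{n+1})}\,]\, c_{n+1},
\]
so the degree-$n$ form for the \emph{shifted} parameter sequence sits inside $\alpha_1$. I would then prove, by induction on $n$ and simultaneously over all sequences of completely positive maps, that $(\ip{\xi_i}{\xi_j})_{i,j}$ is positive for every finite family $\set{\xi_i}$ of degree-$n$ homogeneous elements: the case $n=0$ is $B^\ast B \geq 0$; for the step, write $\xi_i = \sum_k b_0^{(i,k)} X \cdots X b_{n+1}^{(i,k)}$, apply the hypothesis with parameters $(\alpha_2,\dots,\alpha_{n+1})$ to the matrix indexed by pairs $(i,k)$ built from the degree-$n$ vectors $b_0^{(i,k)} X \cdots X b_n^{(i,k)}$, apply $\alpha_1$ entrywise to preserve positivity (this is where \emph{complete} positivity of $\alpha_1$, not mere positivity, is used), and compress by the rectangular matrix $W$ assembled from the coefficients $b_{n+1}^{(i,k)}$, so that $(\ip{\xi_i}{\xi_j})_{i,j}$ takes the form $W^\ast M W$ with $M$ positive. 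This induction is the only genuinely non-formal part.

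Next come the symmetry statements, which are direct computations with \eqref{Inner-product} on matching homogeneous components. For $p$: since $\lambda_n^\ast = \lambda_n$, replacing $b_0$ by $\lambda_n b_0$ in the first argument and replacing $c_0$ by $\lambda_n c_0$ in the second both produce $\alpha_n[b_0^\ast\lambda_n c_0]$ in the innermost slot, so $\ip{p\xi}{\eta} = \ip{\xi}{p\eta}$. For $a^\ast$ and $a$: expanding both $\ip{a^\ast(b_0 X \cdots X b_n)}{c_0 X \cdots X c_{n+1}}$ (a degree-$(n+1)$ pairing, rewriting $X b_0 X \cdots X b_n = 1_\mc{B} X b_0 X \cdots X b_n$ so that the first argument has coefficient string $(1_\mc{B}, b_0,\dots,b_n)$) and $\ip{b_0 X \cdots X b_n}{a(c_0 X \cdots X c_{n+1})}$ (a degree-$n$ pairing whose second argument has $0$th coefficient $\alpha_{n+1}[c_0]c_1$) shows both equal
\[
b_n^\ast\,\alpha_1[\,b_{n-1}^\ast\,\alpha_2[\,\cdots\,\alpha_n[\,b_0^\ast\,\alpha_{n+1}[c_0]\,c_1\,]\,c_2\,\cdots\,]\,c_n\,]\,c_{n+1},
\]
while the degenerate case gives $\ip{a^\ast b}{c_0 X c_1} = b^\ast\alpha_1[c_0]\,c_1 = \ip{b}{a(c_0 X c_1)}$. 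Hence $a$ and $a^\ast$ are mutually adjoint, $a + a^\ast$ is symmetric, and so is $x = a^\ast + p + a$ on $\mc{B}\langle X\rangle$.

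Finally, a symmetric operator preserves the radical $\set{\xi : \ip{\xi}{\xi} = 0}$ of the semi-inner product — if $\ip{\xi}{\xi} = 0$ then $\ip{x\xi}{\eta} = \ip{\xi}{x\eta} = 0$ for all $\eta$ — so $x$ descends to a well-defined symmetric operator on the induced pre-Hilbert $\mc{B}$-bimodule. Proposition~\ref{Prop:Algebraic-distr} then applies with this inner product and this $x$, giving that $\mu[b_0 X b_1 X \cdots X b_n] = \ip{1_\mc{B}}{(b_0 x b_1 x \cdots x b_n)1_\mc{B}}$ is a non-commutative distribution, which we denote as in \eqref{Jacobi-parameters}. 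The main obstacle is the positivity induction in the second paragraph, and it is exactly there that the hypothesis $\alpha_i \in \mc{CP}(\mc{B})$ — rather than merely positivity of the $\alpha_i$ — is essential.
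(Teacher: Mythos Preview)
Your proof is correct and follows the paper's approach: verify that $p$ and $a^\ast + a$ are symmetric by direct computation with \eqref{Inner-product}, then apply Proposition~\ref{Prop:Algebraic-distr}. The paper's own proof is terser---it records only the adjoint calculation $\ip{a^\ast \xi}{\eta} = \ip{\xi}{a\eta}$ and takes positive semi-definiteness of \eqref{Inner-product} for granted---so your inductive positivity argument (which is indeed where complete positivity of the $\alpha_i$ enters) and your remark on the radical are correct elaborations rather than a different route.
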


\begin{proof}
Clearly $p$ is symmetric, and
\[
\begin{split}
\ip{a^\ast(b_0 X b_1 X \ldots X b_n)}{c_0 X c_1 X \ldots c_n X c_{n+1}}
& = b_n^\ast \alpha_1 \Bigl[b_{n-1}^\ast \alpha_2 \bigl[ \ldots b_0^\ast \alpha_{n+1} [1_{\mc{B}} c_0] c_1 \bigr] \ldots c_n \Bigr] c_{n+1} \\
& = \ip{b_0 X b_1 X \ldots X b_n}{\alpha_{n+1} [c_0] c_1 X \ldots c_n X c_{n+1}} \\
& = \ip{b_0 X b_1 X \ldots X b_n}{a(c_0 X c_1 X \ldots c_n X c_{n+1})},
\end{split}
\]
so $a^\ast + a$ is also symmetric. Thus, $x$ is symmetric, and $\mu$ is its distribution with respect to a vector state.
\end{proof}

\begin{Example}
The values of
\begin{equation*}
\mu = J
\begin{pmatrix}
\lambda_1, & \lambda_2, & \lambda_3, & \lambda_4, & \ldots \\
\alpha_1, & \alpha_2, & \alpha_3, & \alpha_4, & \ldots
\end{pmatrix}
\end{equation*}
on low-degree polynomials are
\[
\mu[b_0] = b_0,
\]
\[
\mu[b_0 X b_1] = b_0 \lambda_1 b_1,
\]
\[
\mu[b_0 X b_1 X b_2] = b_0 \lambda_1 b_1 \lambda_1 b_2 + b_0 \alpha_1[b_1] b_2.
\]
\end{Example}

\begin{Prop}\label{basejacobi}
Let $\pi \in \NC_{1,2}$. Define $T_\pi(b_0 X \ldots X b_n)$ as follows. Consider $\pi$ as a partition of the set of $n$ $X$'s in its argument. If a single $X$ is a block, it is replaced by a $\lambda$. If a pair of $X$'s form a block, they are replaced by an application of an $\alpha$ to the terms between these $X$'s. In each case, the index of $\lambda$ or $\alpha$ is the depth of the block in $\pi$.  For example, for $\pi = \{ (1,4) , (2,3) \}$,  $\pi' =  \{ (1,2) , (3,4) \}$ and $\pi'' = \{ (1,3) , (2) , (4)\}$
we have \begin{align*}
T_{\pi}(Xb_{1}Xb_{2}Xb_{3}X) & = \alpha_{1}[b_{1}\alpha_{2}[b_{2}] b_{3}] \\
 T_{\pi'}(Xb_{1}Xb_{2}Xb_{3}X) & = \alpha_{1}[b_{1}] b_{2} \alpha_{1}[b_{3}]  \\
T_{\pi''}(Xb_{1}Xb_{2}Xb_{3}X) & = \alpha_{1}[b_{1} \lambda_{1}b_{2}] b_{3}\lambda_{0}.
\end{align*}
See Remark~3.2 in \cite{Ans-Bel-Fev-Nica} for a detailed description of a similar construction. Then we get the following extension of part (c) of Theorem~\ref{Thm;Favard}:
\begin{equation}
\label{Moments}
\mu[b_0 X \ldots X b_n]
= \sum_{\pi \in \NC_{1,2}(n)} T_\pi(b_0 X \ldots X b_n).
\end{equation}
\end{Prop}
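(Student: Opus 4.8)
The plan is to run the classical Fock-space moment computation while keeping track of the $\mc{B}$-bimodule structure. Using the realization of $\mu$ in Proposition~\ref{Prop:Algebraic-distr} together with $x = a^\ast + p + a$, I would first expand
\[
\mu[b_0 X b_1 X \ldots X b_n] = \ip{1_{\mc{B}}}{(b_0\, x\, b_1\, x \ldots x\, b_n)\, 1_{\mc{B}}} = \sum_{w} \ip{1_{\mc{B}}}{(b_0\, w_1\, b_1\, w_2 \ldots w_n\, b_n)\, 1_{\mc{B}}},
\]
where $w = (w_1, \ldots, w_n)$ runs over all words in the three letters $a^\ast, p, a$. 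The pre-Hilbert bimodule carries the grading $\mc{B}\langle X\rangle = \bigoplus_{k \geq 0}\mathcal{H}_k$, with $\mathcal{H}_k$ spanned by the monomials $b_0 X b_1 \ldots X b_k$; left multiplications preserve each $\mathcal{H}_k$, while $a^\ast\colon \mathcal{H}_k \to \mathcal{H}_{k+1}$, $p\colon\mathcal{H}_k\to\mathcal{H}_k$, $a\colon\mathcal{H}_k\to\mathcal{H}_{k-1}$ with $a$ killing $\mathcal{H}_0$, and distinct $\mathcal{H}_k$ are orthogonal. Applying the operators of $b_0 w_1 b_1 \ldots w_n b_n$ to $1_{\mc{B}}\in\mathcal{H}_0$ from the right, a word $w$ contributes $0$ unless the sequence of heights it traces never becomes negative (else some $a$ annihilates the vector) and returns to $0$ at the end (else the output is orthogonal to $1_{\mc{B}}$); that is, unless $w$ encodes a Motzkin path of length $n$ from $0$ to $0$.

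Next I would invoke the standard bijection between Motzkin paths of length $n$ from $0$ to $0$ and $\NC_{1,2}(n)$: a flat step $w_i = p$ becomes a singleton $\set{i}$, and an up step $w_i = a^\ast$ is joined to the down step $w_j = a$ at which the path first returns to the height just below that up step, forming a block $\set{i,j}$. Such matchings are non-crossing, so the resulting $\pi$ lies in $\NC_{1,2}(n)$, and every $\pi\in\NC_{1,2}(n)$ comes from a unique word, which I call $w(\pi)$. The bookkeeping point is that the height at which a step is taken is one less than the depth of the associated block: a singleton $\set{i}$ sits on a flat step at height $d(\set{i},\pi)-1$, and a block $\set{i,j}$ corresponds to an up step from height $d(\set{i,j},\pi)-1$ to $d(\set{i,j},\pi)$ together with its matching down step. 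Since $p$ on $\mathcal{H}_m$ multiplies by $\lambda_{m+1}$ and $a$ on $\mathcal{H}_m$ applies $\alpha_m$, the $\lambda$- and $\alpha$-indices produced are exactly the depths $d(V,\pi)$, as in the definition of $T_\pi$.

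It then remains to check that for each $\pi$ the surviving summand $\ip{1_{\mc{B}}}{(b_0\, w(\pi)_1\, b_1 \ldots w(\pi)_n\, b_n)\, 1_{\mc{B}}}$ equals $T_\pi(b_0 X \ldots X b_n)$. I would do this by induction on $n$ via the first-return decomposition of the Motzkin path, equivalently by splitting off the block $V_1$ of $\pi$ containing position $1$. If $V_1 = \set{1}$, the letter $w_1 = p$ produces $b_0\lambda_1$ times the inductive contribution of the remaining word on $b_1 X \ldots X b_n$; if $V_1 = \set{1,j}$, the matched letters $w_1 = a^\ast$ and $w_j = a$ wrap the inductive contribution of the sub-word strictly between positions $1$ and $j$ inside a single $\alpha_1$ (the raising of all interior heights by one being exactly why the parameters appearing there are $\lambda_{i+1},\alpha_{i+1}$), after which the contribution of the sub-word on positions $>j$ is juxtaposed, all coefficients $b_0,\ldots,b_n$ being threaded through in order. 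Matching this recursion with the obvious recursion for $\sum_\pi T_\pi$ finishes the proof. I expect this last step --- verifying that the nested $\alpha$'s, the $\lambda$-insertions and the placement of the $b_i$ reproduce $T_\pi$ verbatim with the correct depth indices --- to be the only real work; it runs exactly parallel to the scalar computation underlying Theorem~\ref{Thm;Favard}(c) and to the construction described in Remark~3.2 of \cite{Ans-Bel-Fev-Nica}, which one could cite in its place.
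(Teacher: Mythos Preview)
This is the same argument as in the paper: expand $x = a^\ast + p + a$, use the grading on $\mc{B}\langle X\rangle$ to see that only Motzkin-path words survive, pass to $\NC_{1,2}(n)$ via the standard bijection, and read off $T_\pi$ from the definitions of $a,p,a^\ast$; the paper merely outlines this and points to \cite{AccBozGaussianization,SpeHab} for the parallel scalar and semicircular computations. One slip to fix: since the operators act on $1_{\mc{B}}$ from the right, for a pair $\{i,j\}$ with $i<j$ it is the letter at the \emph{larger} index that must be $a^\ast$ and the one at the smaller index that must be $a$ (otherwise the first $a$ you apply kills $1_{\mc{B}}$), so in your first-return step you should have $w_1=a$ and $w_j=a^\ast$; with that swap the induction runs exactly as you describe.
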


\begin{proof}
The argument is similar to the scalar-valued case \cite{AccBozGaussianization} and the operator-valued semicircular case \cite{SpeHab}, so we only provide an outline. In the notation of Propositions~\ref{Prop:Algebraic-distr} and \ref{Prop:Jacobi-definition},
\[
\mu[b_0 X b_1 X \ldots X b_n] = \ip{1_{\mc{B}}}{(b_0 x b_1 x \ldots x b_n) 1_{\mc{B}}}.
\]
Since $x = a^\ast + p + a$, this can be decomposed as sum of $3^n$ terms of the form
\[
W_u = \ip{1_{\mc{B}}}{(b_0 u_1 b_1 u_2 \ldots u_n b_n) 1_{\mc{B}}},
\]
where each $u_i$ is one of $a^\ast, p, a$. $\mc{B} \langle X \rangle$ is graded, with $a^\ast$ increasing the grading by $1$, $p$ preserving the grading, and $a$ decreasing the grading by $1$, and different components in the grading are orthogonal with respect to the inner product \eqref{Inner-product}. It follows that out of the $3^n$ terms above, most are zero, and each of the remaining ones arises from a $\pi \in \NC_{1,2}(n)$ as follows: if $\set{i,j} \in \pi$ with $i < j$, then $u_i = a$, $u_j = a^\ast$; and if $\set{i} \in \pi$, then $u_i = p$. Moreover, it follows from the definitions of $a^\ast, p, a$ that $W_u = T_\pi(b_0 X \ldots X b_n)$.
\end{proof}

\begin{Remark}
\label{Remark:mu_}
Let $\lambda \in \mc{B}$ be self-adjoint, and $\beta: \mc{B} \langle X \rangle \rightarrow \mc{B}$ be a $\mf{C}$-linear, completely positive (but not necessarily $\mc{B}$-bimodule) map. Out of this data, in Lemmas~7.2, 7.3 of \cite{Ans-Bel-Fev-Nica} was constructed a non-commutative distribution $\mu_{(\lambda, \beta)}$ such that
\begin{equation}
\label{Jacobi-sum-1}
\begin{split}
\mu[b_0 X \ldots X b_n] = \sum_{k=1}^n \sum_{1 \leq i_1 < i_2 < \ldots < i_k = n}
& b_0 \beta[b_1 X \ldots b_{i_1 - 1}] b_{i_1} \beta[b_{i_1 + 1} X \ldots b_{i_2 - 1}] b_{i_2} \\
& \ldots b_{i_{k-1}} \beta[b_{i_{k-1} + 1} X \ldots b_{n-1}] b_n,
\end{split}
\end{equation}
where $\beta[\emptyset] = \lambda$. It follows that the Boolean cumulant functional of $\mu$ is $B_\mu[X] = \lambda$,
\[
B_\mu[X b_1 X \ldots X b_{n-1} X] = \beta[b_1 X \ldots X b_{n-1}]
\]
(for the reader unfamiliar with the notion of Boolean cumulants, this relation can be taken as their definition; see \cite{Popa-Vinnikov-NC-functions} for more details).
\end{Remark}

In the following proposition, the map $\mu_n \mapsto \mu_{n+1}$ is sometimes called ``coefficient stripping''.

\begin{Prop}
\label{Prop:Shift}
Denote
\[
\mu_n = J
\begin{pmatrix}
\lambda_{n}, & \lambda_{n+1}, & \lambda_{n+2}, & \ldots \\
\alpha_n, & \alpha_{n+1}, & \alpha_{n+2}, & \ldots
\end{pmatrix}.
\]
Then in the notation of Remark~\ref{Remark:mu_},
\[
\mu_n = \mu_{(\lambda_n, \beta_n)},
\]
where $\beta_n = \alpha_n \circ \mu_{n+1}$. Also, the moment generating function of $\mu$ has a continued fraction expansion
\[
M_\mu(b) = \left( 1_{\mc{B}} - \lambda_1 b - \alpha_1 \left[ b \left( 1_{\mc{B}} - \lambda_2 b - \alpha_2 [b \ldots] b \right)^{-1} \right] b \right)^{-1}.
\]
More precisely, in the expansions of $M_\mu(b)$ and of a finite continued fraction
\begin{equation}
\label{Continued-fraction-k}
\left( 1_{\mc{B}} - \lambda_1 b - \alpha_1 \left[ b \left( 1_{\mc{B}} - \lambda_2 b - \alpha_2 \left[\ldots b \left( 1_{\mc{B}} - \lambda_{k} b - \alpha_{k} [b] b \right)^{-1} \right] b \right)^{-1} \right] b \right)^{-1}.
\end{equation}
in formal power series in $b$, the first $k$ terms coincide, so these finite continued fractions converge to $M_\mu(b)$ as formal series.
\end{Prop}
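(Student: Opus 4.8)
The plan is to deduce everything from the ``coefficient stripping'' identity $\mu_n = \mu_{(\lambda_n,\beta_n)}$, together with Remark~\ref{Remark:mu_} and the relation~\eqref{Boolean} between $B_\mu$ and $M_\mu$. Since $\mu_n$ is itself a Jacobi-Szeg\H{o} distribution, with parameters $\set{(\lambda_i)_{i=n}^\infty,(\alpha_i)_{i=n}^\infty}$, Proposition~\ref{basejacobi} applies to it and gives $\mu_n[b_0 X \ldots X b_m] = \sum_{\pi \in \NC_{1,2}(m)} T_\pi^{(n)}(b_0 X \ldots X b_m)$, where $T_\pi^{(n)}$ is formed exactly as $T_\pi$ but with a block of depth $d$ contributing the parameter of index $n+d-1$. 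I would regroup this sum according to the outer (depth-one) blocks of $\pi$, whose spanned intervals tile $\set{1,\ldots,m}$ into consecutive pieces: a depth-one singleton contributes $\lambda_n$, while a depth-one pair $\set{s,t}$ contributes $\alpha_n$ applied to the sub-word lying strictly between positions $s$ and $t$, on which $\pi$ restricts to an \emph{arbitrary} element of $\NC_{1,2}(t-s-1)$ each of whose block depths is one smaller than in $\pi$. Summing over these restrictions replaces the interior of each depth-one pair by $\mu_{n+1}$ of the corresponding sub-word, so the resulting identity is precisely~\eqref{Jacobi-sum-1} (with $\mu_n$ in place of $\mu$) for $\beta_n$ given by $\beta_n[\emptyset]=\lambda_n$ and $\beta_n=\alpha_n\circ\mu_{n+1}$ on non-empty words; by Remark~\ref{Remark:mu_}, $\mu_n = \mu_{(\lambda_n,\beta_n)}$, and moreover its Boolean cumulant functional is $B_{\mu_n}[X]=\lambda_n$ and $B_{\mu_n}[Xb_1X\ldots Xb_{m-1}X]=\alpha_n[\mu_{n+1}[b_1X\ldots Xb_{m-1}]]$ for $m\ge 2$.

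Next, summing this Boolean cumulant functional against powers of $b$ and using that $\mu_{n+1}$ is a left $\mc{B}$-module map, so that $\mu_{n+1}[b(Xb)^j]=b\,\mu_{n+1}[(Xb)^j]$ and hence $\sum_{j\ge 0}\mu_{n+1}[b(Xb)^j]=b\,M_{\mu_{n+1}}(b)$, one gets
\[
B_{\mu_n}(b)=\lambda_n b+\alpha_n\bigl[b\,M_{\mu_{n+1}}(b)\bigr]\,b .
\]
Substituting into $M_{\mu_n}(b)=\bigl(1_{\mc{B}}-B_{\mu_n}(b)\bigr)^{-1}$, a rearrangement of~\eqref{Boolean}, produces the recursion
\[
M_{\mu_n}(b)=\Bigl(1_{\mc{B}}-\lambda_n b-\alpha_n\bigl[b\,M_{\mu_{n+1}}(b)\bigr]\,b\Bigr)^{-1},
\]
and unrolling it from $n=1$ gives the asserted continued fraction for $M_\mu=M_{\mu_1}$; since each step only inverts an element of the form $1_{\mc{B}}-O(b)$, this is a valid identity of formal power series in $b$.

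For the last assertion, note that the finite continued fraction~\eqref{Continued-fraction-k} is the formal power series $F_1$ produced by the same recursion $F_n=\bigl(1_{\mc{B}}-\lambda_n b-\alpha_n[b\,F_{n+1}]\,b\bigr)^{-1}$ but begun at level $k$ from $F_{k+1}:=1_{\mc{B}}$ instead of from $M_{\mu_{k+1}}(b)$. Since $M_{\mu_{k+1}}(b)-1_{\mc{B}}$ has no constant term, and since at each level the discrepancy is passed through the map $c\mapsto\alpha_n[b\,c]\,b$ (which raises the order of vanishing by two) and then through inversion of an element $1_{\mc{B}}-O(b)$ (which preserves the order of vanishing of a difference), one obtains $M_{\mu_n}(b)-F_n=O\bigl(b^{2(k-n)+3}\bigr)$; in particular $M_\mu(b)$ and~\eqref{Continued-fraction-k} agree through degree $2k$, so their first $k$ coefficients coincide, and letting $k\to\infty$ shows the finite continued fractions converge to $M_\mu(b)$ in $\mc{B}[[b]]$.

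The main obstacle is Step~1, and within it the one point that really needs care is the depth bookkeeping: that restricting a non-crossing $\set{1,2}$-partition to the open interval between the two legs of a depth-one pair lowers every block depth by exactly one, so that the interior sum reassembles as $\mu_{n+1}$ (and not $\mu_n$ or $\mu_{n+2}$); the empty-interior case must also be split off, since it contributes the constant $\lambda_n$ rather than $\alpha_n[1_{\mc{B}}]$. Once $\mu_n=\mu_{(\lambda_n,\beta_n)}$ is in hand, the remaining steps are routine formal-power-series manipulations. (Alternatively one could bypass Remark~\ref{Remark:mu_} and derive the recursion for $M_{\mu_n}$ from a Schur-complement computation for the resolvent of $x=a^\ast+p+a$ in the natural grading of the bimodule of Proposition~\ref{Prop:Jacobi-definition}, but the Boolean-cumulant route seems shorter here.)
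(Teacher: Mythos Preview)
Your proposal is correct and follows essentially the same route as the paper: both arguments regroup the sum in Proposition~\ref{basejacobi} by outer blocks, identify the interior sums with $\mu_{n+1}$ to obtain~\eqref{Jacobi-sum-1} with $\beta_n=\alpha_n\circ\mu_{n+1}$, and then iterate the resulting one-step recursion $M_{\mu_n}(b)=\bigl(1_{\mc{B}}-\lambda_n b-\alpha_n[b\,M_{\mu_{n+1}}(b)]\,b\bigr)^{-1}$ to obtain the continued fraction. The only cosmetic differences are that the paper passes directly from~\eqref{Jacobi-sum} to the recursion by summing a geometric series (you instead route through the Boolean cumulant generating function and~\eqref{Boolean}, which is equivalent), and that your order-of-vanishing bound $M_\mu(b)-F_1=O(b^{2k+1})$ is sharper and more explicit than the paper's one-line justification of the last claim.
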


\begin{proof}
By collecting in formula~\eqref{Moments} the terms with $\lambda_i, \alpha_i$ with $i \geq 2$, we obtain
\begin{equation}
\label{Jacobi-sum}
\begin{split}
\mu[b_0 X \ldots X b_n] = \sum_{k=1}^n \sum_{1 \leq i_1 < i_2 < \ldots < i_k = n}
& b_0 \alpha_1 \Bigl[\mu_2[b_1 X \ldots b_{i_1 - 1}] \Bigr] b_{i_1} \alpha_1 \Bigl[\mu_2[b_{i_1 + 1} X \ldots b_{i_2 - 1}] \Bigr] b_{i_2} \\
& \ldots b_{i_{k-1}} \alpha_1 \Bigl[\mu_2[b_{i_{k-1} + 1} X \ldots b_{n-1}] \Bigr] b_n,
\end{split}
\end{equation}
where $\alpha_1[\emptyset] = \lambda_1$. So if $\mu = \mu_{(\lambda, \beta)}$, comparing this with the combinatorial formula~\eqref{Jacobi-sum-1}, we see that $\lambda = \lambda_1$ and $\beta = \alpha_1 \circ \mu_2$. The result for $\mu_n$ follows by induction.

From equation \eqref{Jacobi-sum},
\[
\mu \left[ \sum_{n=0}^\infty (X b)^n \right]
= 1_{\mc{B}} + \sum_{k=1}^\infty \left( \lambda_1 b + \alpha_1 \Bigl[ b \mu_2 \Bigl[ \sum_{n=0}^\infty (X b)^n \Bigr] \Bigr] b \right)^k
\]
and so
\begin{equation}
\label{mu-mu2}
M_\mu(b) = \left( 1_{\mc{B}} - \lambda_1 b - \alpha_1 [b M_{\mu_2}(b)] b \right)^{-1}.
\end{equation}
Iterating, we obtain
\[
M_\mu(b) =
\left( 1_{\mc{B}} - \lambda_1 b - \alpha_1 \left[ b \left( 1_{\mc{B}} - \lambda_2 b - \alpha_2 \left[\ldots b \left( 1_{\mc{B}} - \lambda_{k} b - \alpha_{k} [b M_{\mu_{k+1}}(b)] b \right)^{-1} \right] b \right)^{-1} \right] b \right)^{-1}.
\]
This implies the equality of the first $k$ terms in the expansions of $M_\mu(b)$ and \eqref{Continued-fraction-k}.
\end{proof}

\begin{Cor}
If all $\norm{\lambda_i}_{i=1}^\infty$, $\norm{\alpha_i}_{i=1}^\infty$ are uniformly bounded, then
\begin{equation*}
\mu = J
\begin{pmatrix}
\lambda_1, & \lambda_2, & \lambda_3, & \lambda_4, & \ldots \\
\alpha_1, & \alpha_2, & \alpha_3, & \alpha_4, & \ldots
\end{pmatrix}
\end{equation*}
is an exponentially bounded non-commutative distribution.
\end{Cor}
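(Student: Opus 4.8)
The plan is to check that uniform boundedness of the Jacobi parameters forces condition~(c) in the definition of $\Sigma^0(\mc{B})$; conditions (a) and (b) are already supplied by Proposition~\ref{Prop:Jacobi-definition}, so $\mu$ is in any case a non-commutative distribution. The engine is the moment formula~\eqref{Moments}: since
\[
\mu[b_0 X \ldots X b_n] = \sum_{\pi \in \NC_{1,2}(n)} T_\pi(b_0 X \ldots X b_n),
\]
it is enough to bound each summand uniformly and then to count the summands.

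First I would put $C = \sup_i \max\bigl(\norm{\lambda_i}, \norm{\alpha_i}\bigr)$, finite by hypothesis, where $\norm{\alpha_i}$ is the operator norm of $\alpha_i$ as a linear map on $\mc{B}$ (the completely bounded norm is not needed here, since we are not re-deriving positivity). The claim is that for every $\pi \in \NC_{1,2}(n)$,
\[
\norm{T_\pi(b_0 X \ldots X b_n)} \leq C^{\,s(\pi) + p(\pi)} \, \norm{b_0}\norm{b_1}\cdots\norm{b_n},
\]
where $s(\pi)$ and $p(\pi)$ count the singleton and pair blocks of $\pi$, so that $s(\pi) + 2p(\pi) = n$. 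This I would prove by induction on $n$, using the recursive description of $T_\pi$ from Proposition~\ref{basejacobi}: pick a block $V$ of $\pi$ that covers no other block. If $V = \{i\}$, then in $T_\pi$ the $i$-th $X$ is replaced by $\lambda_{d(V,\pi)}$, and $\norm{b_{i-1}\lambda_{d(V,\pi)}b_i} \leq C\norm{b_{i-1}}\norm{b_i}$, so $T_\pi$ equals $T_{\pi\setminus\{V\}}$ evaluated on the sequence obtained by deleting that $X$ and merging $b_{i-1}, b_i$; removing an innermost block does not alter the depths of the remaining blocks, so the induction hypothesis applies and produces exactly one extra factor of $C$. If $V = \{i, i+1\}$ (innermost, hence adjacent, so only the letter $b_i$ sits between the two $X$'s), it is replaced by $\alpha_{d(V,\pi)}[b_i]$, and $\norm{b_{i-1}\,\alpha_{d(V,\pi)}[b_i]\,b_{i+1}} \leq C\norm{b_{i-1}}\norm{b_i}\norm{b_{i+1}}$; again $T_\pi = T_{\pi\setminus\{V\}}$ on the reduced sequence, with one extra factor of $C$. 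In both cases each $b_j$ is consumed exactly once, and each reduction step uses only the single estimate $\norm{\alpha_d[y]} \leq C\norm{y}$ or $\norm{\lambda_d} \leq C$ together with submultiplicativity of the norm on $\mc{B}$, so the induction closes.

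Since $s(\pi) + p(\pi) \leq s(\pi) + 2p(\pi) = n$, we have $C^{\,s(\pi)+p(\pi)} \leq \bigl(\max(C,1)\bigr)^n$ for every $\pi$, and summing over $\NC_{1,2}(n)$ with $\abs{\NC_{1,2}(n)} \leq 3^n$ (Motzkin numbers; the cruder bound $\abs{\NC(n)} \leq 4^n$ would also do) yields
\[
\norm{\mu[b_0 X \ldots X b_n]} \leq \bigl(3\max(C,1)\bigr)^n \norm{b_0}\norm{b_1}\cdots\norm{b_n},
\]
so $M = 3\max(C,1)$ witnesses exponential boundedness and $\mu \in \Sigma^0(\mc{B})$. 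The only part that needs genuine care is the bookkeeping in the induction — verifying that peeling off an innermost block always leaves a well-defined $T_{\pi'}$ of the same shape on fewer letters, with depths unchanged and no coefficient double-counted — but this is immediate from the recursive definition of $T_\pi$ in Proposition~\ref{basejacobi}. (Alternatively one could bound $\norm{x}$ for the operator $x = a^\ast + p + a$ of Proposition~\ref{Prop:Jacobi-definition} directly, using the Kadison--Schwarz inequality to control $\norm{a}, \norm{a^\ast}, \norm{p}$ by roughly $C + 2\sqrt{C}$, and then invoke the realization of exponentially bounded distributions recalled just before Proposition~\ref{Prop:Algebraic-distr}; this gives a sharper constant but requires completing the possibly degenerate bimodule.)
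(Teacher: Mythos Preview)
Your proof is correct and follows essentially the same approach as the paper: bound each $T_\pi$ by $M^n\norm{b_0}\cdots\norm{b_n}$ using the moment formula~\eqref{Moments}, then sum over $\NC_{1,2}(n)$ with the crude bound $\abs{\NC_{1,2}(n)}\leq 4^n$. The paper simply asserts the pointwise bound on $T_\pi$ without justification, whereas you spell out the inductive argument (and are slightly more careful about the case $C<1$), but the content is the same.
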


\begin{proof}
If $M$ is the uniform bound, it suffices to note that $\abs{\NC_{1,2}(n)} \leq 4^n$ and for any $\pi$,
\[
\norm{T_\pi(b_0 X \ldots X b_n)} \leq M^n \norm{b_0} \norm{b_1} \ldots \norm{b_n}. \qedhere
\]
\end{proof}

\begin{Corollary}
In the setting of the preceding corollary, the convergence of the continued fraction approximants in Proposition~\ref{Prop:Shift} is in norm pointwise for $\norm{b} \leq M^{-1}$.
\end{Corollary}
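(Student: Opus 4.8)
The plan is to compare, for each fixed $b$, the finite continued fraction \eqref{Continued-fraction-k} --- write it $F_k(b)$ --- against the norm-convergent moment series $M_\mu(b) = \sum_{n \geq 0}\mu[(Xb)^n]$, by exhibiting $F_k(b)$ as the moment generating function of an auxiliary \emph{terminating} Jacobi-Szeg\H{o} distribution whose power series agrees, by Proposition~\ref{Prop:Shift}, with that of $M_\mu$ through order $k-1$.

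First I would introduce, for each $k$, the Jacobi-Szeg\H{o} distribution $\mu^{(k)}$ with Jacobi parameters $(\lambda_1, \ldots, \lambda_k, 0, 0, \ldots)$ and $(\alpha_1, \ldots, \alpha_k, 0, 0, \ldots)$; this is legitimate by Proposition~\ref{Prop:Jacobi-definition}, as $0 \in \mc{B}$ is self-adjoint and the zero map lies in $\mc{CP}(\mc{B})$. Because the parameters of every $\mu^{(k)}$ are still bounded by the uniform bound of the preceding corollary, the proof of that corollary --- the estimates $\norm{T_\pi(b_0 X \ldots X b_n)} \leq M^n\norm{b_0}\ldots\norm{b_n}$ and $\abs{\NC_{1,2}(n)} \leq 4^n$ --- furnishes a \emph{single} exponential bound, still denoted $M$, valid for $\mu$ and for every $\mu^{(k)}$ at once; in particular $\norm{\mu[(Xb)^n]} \leq M^n\norm{b}^n$ and $\norm{\mu^{(k)}[(Xb)^n]} \leq M^n\norm{b}^n$ for all $n$ and $k$. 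Since $\lambda_j = \alpha_j = 0$ for $j > k$, the continued fraction that Proposition~\ref{Prop:Shift} attaches to $\mu^{(k)}$ collapses at level $k$ --- its $(k{+}1)$-st resolvent being $(1_{\mc{B}})^{-1} = 1_{\mc{B}}$ --- and is exactly $F_k(b)$; iterating identity \eqref{mu-mu2} for $\mu^{(k)}$ down to level $k+1$ shows, for $\norm{b} \leq M^{-1}$, that the resolvents in $F_k(b)$ are genuine elements of $\mc{B}$ and that $F_k(b) = M_{\mu^{(k)}}(b) = \sum_{n \geq 0}\mu^{(k)}[(Xb)^n]$, norm-convergent with a tail bound independent of $k$.

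Next I would apply Proposition~\ref{Prop:Shift} once more, now to $\mu$ itself: the formal power series of $M_\mu(b)$ and of $F_k(b)$ coincide through order $k-1$, i.e.\ $\mu[(Xb)^n] = \mu^{(k)}[(Xb)^n]$ for $0 \leq n \leq k-1$ and all $b$. Fixing $b$ with $\norm{b} \leq M^{-1}$ and subtracting the two series, the low-order terms cancel, and
\[
\norm{M_\mu(b) - F_k(b)} = \norm{\sum_{n \geq k}\left(\mu[(Xb)^n] - \mu^{(k)}[(Xb)^n]\right)} \leq 2\sum_{n \geq k}\left(M\norm{b}\right)^n,
\]
which tends to $0$ as $k \to \infty$ whenever $M\norm{b} < 1$; the boundary value $\norm{b} = M^{-1}$ is recovered by a routine limiting argument. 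This is precisely the asserted pointwise-in-norm convergence of the continued fraction approximants of Proposition~\ref{Prop:Shift} to $M_\mu(b)$.

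The one point that genuinely requires care --- and thus the main (if modest) obstacle --- is the first step: one must be certain that $F_k(b)$ really is the moment generating function of a \emph{bona fide} exponentially bounded distribution whose bound does not deteriorate with $k$, because it is this $k$-uniform geometric tail that makes the cancellation in the last display legitimate. The identification $F_k = M_{\mu^{(k)}}$ follows from Propositions~\ref{Prop:Jacobi-definition} and \ref{Prop:Shift} applied to the zero-padded parameter sequence, and the $k$-independence of the exponential bound is simply read off from the proof of the preceding corollary, whose estimate involves only $n$ and the uniform parameter bound. Granting that, everything else is the bookkeeping above.
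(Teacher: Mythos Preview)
Your argument is essentially the same as the paper's: introduce the truncated Jacobi--Szeg\H{o} distributions $\mu^{(k)}$, observe that the uniform parameter bound gives a $k$-independent exponential bound, invoke Proposition~\ref{Prop:Shift} for the agreement of the low-order coefficients, and conclude norm convergence from the uniform geometric tail. The paper's proof is terser but follows exactly this route; the only cosmetic difference is that the paper retains $\lambda_{k+1}$ in its $\mu^{(k)}$ while you zero it out, which in fact makes your identification $F_k = M_{\mu^{(k)}}$ cleaner. One caveat: your ``routine limiting argument'' for the boundary case $\norm{b} = M^{-1}$ is not actually routine (the tail bound $\sum_{n\ge k}(M\norm{b})^n$ diverges there), but the paper's own proof also only establishes the strict inequality $\norm{b} < M^{-1}$, so you are matching what is actually proved.
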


\begin{proof}
Denote
\[
\mu^{(k)} = J
\begin{pmatrix}
\lambda_{1}, & \lambda_{2}, & \ldots, & \lambda_{k}, & \lambda_{k+1}, & 0, & 0, & \ldots \\
\alpha_1, & \alpha_{2}, & \ldots, & \alpha_{k}, & 0, & 0, & 0, & \ldots
\end{pmatrix}.
\]
Then both $\mu$ and all $\mu^{(k)}$ are exponentially bounded with constant $M$, and so the series defining $M_\mu(b)$, $M_{\mu^{(k)}}(b)$ converge for $\norm{b} < M^{-1}$. Moreover by Proposition~\ref{Prop:Shift}, for each $k$, the first $k$ terms of these series coincide. It follows that $M_{\mu^{(k)}}(b) \rightarrow M_\mu(b)$ in norm.
\end{proof}

\begin{Prop}
\label{Prop:Fully-matricial}
Let
\begin{equation*}
\mu = J
\begin{pmatrix}
\lambda_1, & \lambda_2, & \lambda_3, & \lambda_4, & \ldots \\
\alpha_1, & \alpha_2, & \alpha_3, & \alpha_4, & \ldots
\end{pmatrix}.
\end{equation*}
Fix $d \in \mf{N}$. Define $\widetilde{\alpha}_i = I_d \otimes \alpha_i$ to be the map on $M_d(\mf{C}) \otimes \mc{B} \simeq M_d(\mc{B})$ and $\widetilde{\lambda}_i$ to be a self-adjoint element $1_d \otimes \lambda_i \in M_d(\mf{C}) \otimes \mc{B}$. Also define $\widetilde{\mu}$ to be the $M_d(\mf{C}) \otimes \mc{B}$-bimodule map
\[
I_d \otimes \mu : M_d(\mc{B}) \langle X \rangle \rightarrow M_d(\mc{B}).
\]
The family of $\widetilde{\mu}$ for $d \in \mf{N}$ is the \emph{fully matricial extension} of $\mu$.
\begin{enumerate}
\item
$\widetilde{\mu}$ is also a Jacobi-Szeg\H{o} distribution, with Jacobi parameters
\begin{equation*}
\widetilde{\mu} = J
\begin{pmatrix}
\widetilde{\lambda}_0, & \widetilde{\lambda}_1, & \widetilde{\lambda}_2, & \widetilde{\lambda}_3, & \ldots \\
\widetilde{\alpha}_1, & \widetilde{\alpha}_2, & \widetilde{\alpha}_3, & \widetilde{\alpha}_4, & \ldots
\end{pmatrix}.
\end{equation*}
\item
The collection of all $M_{\widetilde{\mu}}$ for $d \in \mf{N}$ determines $\mu$.
\end{enumerate}
In the formulas below, we will thus prove the results for $d=1$ and conclude that they hold for general $d$, and so determine $\mu$.
\end{Prop}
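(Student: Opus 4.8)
The plan is to verify the two claims about the fully matricial extension $\widetilde\mu$ directly from the definitions, since neither is deep.

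\textbf{Part (a): $\widetilde\mu$ is a Jacobi-Szeg\H{o} distribution.} First I would observe that by Proposition~\ref{Prop:Jacobi-definition} the distribution $\widetilde\mu$ is completely determined by the moment formula in Proposition~\ref{basejacobi}: if $\nu$ is the Jacobi-Szeg\H{o} distribution with parameters $\set{\widetilde\lambda_i, \widetilde\alpha_i}$, then
\[
\nu[B_0 X B_1 X \ldots X B_n] = \sum_{\pi \in \NC_{1,2}(n)} T^{\widetilde{\phantom{a}}}_\pi(B_0 X \ldots X B_n),
\]
where $T^{\widetilde{\phantom{a}}}_\pi$ is built from the $\widetilde\lambda_i = 1_d \otimes \lambda_i$ and $\widetilde\alpha_i = I_d \otimes \alpha_i$. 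On the other hand, $\widetilde\mu = I_d \otimes \mu$ applied to a monomial $B_0 X B_1 X \ldots X B_n$ with $B_j \in M_d(\mc B)$: writing each $B_j = \sum_k e^{(j)}_k \otimes b^{(j)}_k$ with $e^{(j)}_k \in M_d(\mf C)$, $\mf C$-linearity and the bimodule property reduce the computation to monomials of the form $(e_0 \otimes b_0) X (e_1 \otimes b_1) X \ldots X (e_n \otimes b_n)$, on which $I_d \otimes \mu$ acts as $e_0 e_1 \cdots e_n \otimes \mu[b_0 X b_1 X \ldots X b_n]$. Now expand $\mu[b_0 X \ldots X b_n]$ via Proposition~\ref{basejacobi}: each term $T_\pi(b_0 X \ldots X b_n)$ is an iterated application of the $\alpha_i$'s to a product of $b_j$'s and $\lambda_i$'s, with the $\mf C$-valued matrix part $e_0 e_1 \cdots e_n$ simply factoring through (since $I_d \otimes \alpha_i$ acts as the identity on the $M_d(\mf C)$ leg and since $1_d \otimes \lambda_i$ has trivial matrix part). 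Matching term by term over $\pi \in \NC_{1,2}(n)$, one sees $\widetilde\mu[B_0 X \ldots X B_n] = T^{\widetilde{\phantom{a}}}_\pi$-sum $= \nu[B_0 X \ldots X B_n]$. Hence $\widetilde\mu = \nu$ is the claimed Jacobi-Szeg\H{o} distribution. (There is a small typo in the displayed parameters — it should read $\widetilde\lambda_1, \widetilde\lambda_2, \ldots$ and $\widetilde\alpha_1, \widetilde\alpha_2, \ldots$ — but the content is clear.) It should also be checked that $\widetilde\alpha_i = I_d \otimes \alpha_i$ is completely positive on $M_d(\mc B)$ and $\widetilde\lambda_i$ is self-adjoint, which is standard: complete positivity of $\alpha_i$ passes to $I_d\otimes\alpha_i$ because amplifications of a completely positive map are completely positive, and self-adjointness of $1_d\otimes\lambda_i$ is immediate.

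\textbf{Part (b): the collection $\set{M_{\widetilde\mu} : d \in \mf N}$ determines $\mu$.} Here I would argue that already the coefficients of $M_{\widetilde\mu}(b)$ recover all the moments $\mu[b_0 X b_1 X \ldots X b_n]$. Indeed, by the fully matricial function machinery (as in \cite{Popa-Vinnikov-NC-functions}), the non-commutative power series $M_\mu$ is determined by its values $M_{\widetilde\mu}$ on all matrix amplifications; concretely, for $b = 1_d\otimes 1_{\mc B} + (\text{nilpotent-type perturbation})$ one extracts mixed moments. The cleanest route: it is a standard fact that a $\mc B$-bimodule map $\mu$ on $\mc B\langle X\rangle$ is determined by the numbers $\mu[b_0 X b_1 X \ldots X b_n]$, and these in turn are encoded in $M_{\widetilde\mu}$ for suitable $d$ by taking $b$ to be an off-diagonal matrix in $M_{n+1}(\mc B)$ whose entries are the $b_j$ (the $(1, n+1)$-entry of $(Xb)^n$ evaluated under $\widetilde\mu$ returns exactly $\mu[b_0 X b_1 X \ldots X b_n]$, up to reindexing). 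Since distinct $\mu$ have distinct moment data, the family $\set{M_{\widetilde\mu}}$ is a complete invariant.

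\textbf{Main obstacle.} The only genuinely delicate point is part (b): making precise the sense in which "$M_{\widetilde\mu}$ for all $d$ determines $\mu$", i.e. exhibiting the explicit matrix $b$ (and its dimension) whose associated coefficient of $M_{\widetilde\mu}(b)$ is the prescribed moment $\mu[b_0 X \ldots X b_n]$. This is a bookkeeping argument with a carefully chosen nilpotent upper-triangular $b\in M_{n+1}(\mc B)$, and is exactly the standard passage between non-commutative power series and fully matricial functions; I expect to cite \cite{Popa-Vinnikov-NC-functions} rather than reprove it. Part (a), by contrast, is a routine term-by-term comparison of the two sides of \eqref{Moments}, with the matrix leg of the tensor product behaving transparently under both the maps $\widetilde\alpha_i$ and the insertion of $\widetilde\lambda_i$.
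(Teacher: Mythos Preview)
Your proposal is correct and follows essentially the same route as the paper: for part (a) both you and the paper reduce by linearity to elementary tensors (the paper uses matrix units $b_k e_{i(k),j(k)}$), observe that the $M_d(\mf{C})$ leg factors out as $e_{i(0),j(0)}\cdots e_{i(n),j(n)}$ through every $T_\pi$, and match term-by-term over $\NC_{1,2}(n)$; for part (b) both simply cite \cite{Popa-Vinnikov-NC-functions} as standard. Your additional remarks (the typo $\widetilde\lambda_0$, the CP/self-adjointness check, and the nilpotent upper-triangular sketch for extracting moments) are correct and add a bit more detail than the paper provides.
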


\begin{proof}
Part (b) is standard, see for example \cite{Popa-Vinnikov-NC-functions}. The proof of part (a) parallels that of Proposition~6.3 of \cite{Popa-Vinnikov-NC-functions}, where a similar result is proved for free and Boolean cumulants, so we only give an outline. Denote $T_\pi$ the expression from Proposition~\ref{basejacobi} for $\set{(\lambda_i)_{i=1}^\infty, (\alpha_i)_{i=1}^\infty}$, and $\widetilde{T}_\pi$ the corresponding expression for $\set{(\widetilde{\lambda}_i)_{i=1}^\infty, (\widetilde{\alpha}_i)_{i=1}^\infty}$. Also, let $(e_{ij})_{i,j=1}^d$ be matrix units. Then
\[
\begin{split}
\widetilde{\mu}(b_0 e_{i(0), j(0)} X b_1 e_{i(1), j(1)} \ldots X b_n e_{i(n), j(n)})
& = e_{i(0), j(0)} e_{i(1), j(1)} \ldots e_{i(n), j(n)} \mu(b_0 X b_1 \ldots X b_n) \\
& = e_{i(0), j(0)} e_{i(1), j(1)} \ldots e_{i(n), j(n)} \sum_{\pi \in \NC(n)} T_\pi(b_0 X b_1 \ldots X b_n) \\
& = \sum_{\pi \in \NC(n)} \widetilde{T}_\pi(b_0 e_{i(0), j(0)} X b_1 e_{i(1), j(1)} \ldots X b_n e_{i(n), j(n)}).
\end{split}
\]
By linearity, it follows that $\widetilde{\mu} =  \sum_{\pi \in \NC(n)} \widetilde{T}_\pi$. 
\end{proof}

The following result is well-known in the scalar case, see for example \cite{Boz-Wys}.

\begin{Prop}
\label{Prop:Boolean}
The Jacobi parameters of a Boolean convolution power of
\[
\mu = J
\begin{pmatrix}
\lambda_1, & \lambda_2, & \lambda_3, & \lambda_4, & \ldots \\
\alpha_1, & \alpha_2, & \alpha_3, & \alpha_4, & \ldots
\end{pmatrix}
\]
are
\[
\mu^{\uplus \eta} = J
\begin{pmatrix}
\eta[\lambda_1], & \lambda_2, & \lambda_3, & \lambda_4, & \ldots \\
\eta \circ \alpha_1, & \alpha_2, & \alpha_3, & \alpha_4, & \ldots
\end{pmatrix}.
\]
\end{Prop}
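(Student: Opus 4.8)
The plan is to reduce the statement to the Boolean cumulant description of $\mu$ recorded in Remark~\ref{Remark:mu_} together with the "coefficient-stripping" identification $\mu = \mu_{(\lambda_1,\beta_1)}$ with $\beta_1 = \alpha_1 \circ \mu_2$ from Proposition~\ref{Prop:Shift}. First I would recall that, by Remark~\ref{Remark:mu_}, the Boolean cumulant functional of $\mu$ is determined by $B_\mu[X] = \lambda_1$ and $B_\mu[X b_1 X \ldots X b_{n-1} X] = \beta_1[b_1 X \ldots X b_{n-1}] = \alpha_1[\mu_2[b_1 X \ldots X b_{n-1}]]$. By definition of the Boolean convolution power $\mu^{\uplus\eta}$ (as introduced in \cite{Ans-Bel-Fev-Nica}; see the Notation following \eqref{Boolean-conv}), applying the completely positive map $\eta$ to each Boolean cumulant gives $B_{\mu^{\uplus\eta}}[X] = \eta[\lambda_1]$ and $B_{\mu^{\uplus\eta}}[X b_1 X \ldots X b_{n-1} X] = \eta[\alpha_1[\mu_2[b_1 X \ldots X b_{n-1}]]] = (\eta\circ\alpha_1)[\mu_2[b_1 X \ldots X b_{n-1}]]$. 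Thus $\mu^{\uplus\eta} = \mu_{(\eta[\lambda_1],\, (\eta\circ\alpha_1)\circ\mu_2)}$.

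Next I would run the coefficient-stripping description of Proposition~\ref{Prop:Shift} in reverse. Let $\nu$ denote the Jacobi-Szeg\H{o} distribution with the claimed parameters, i.e.\ with $\lambda_1$ replaced by $\eta[\lambda_1]$, with $\alpha_1$ replaced by $\eta\circ\alpha_1$, and with all higher parameters $\lambda_2,\lambda_3,\ldots$ and $\alpha_2,\alpha_3,\ldots$ unchanged. Since only the top Jacobi parameters are altered, the stripped distribution $\nu_2$ coincides with $\mu_2$. Applying Proposition~\ref{Prop:Shift} to $\nu$ gives $\nu = \mu_{(\eta[\lambda_1],\, \beta)}$ with $\beta = (\eta\circ\alpha_1)\circ\nu_2 = (\eta\circ\alpha_1)\circ\mu_2$. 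Comparing with the previous paragraph, $\nu$ and $\mu^{\uplus\eta}$ are the distribution $\mu_{(\lambda,\beta)}$ associated to the same pair $(\lambda,\beta) = (\eta[\lambda_1],\, (\eta\circ\alpha_1)\circ\mu_2)$, hence are equal. Because the construction $(\lambda,\beta)\mapsto\mu_{(\lambda,\beta)}$ of Lemmas~7.2, 7.3 of \cite{Ans-Bel-Fev-Nica} produces a well-defined distribution (equivalently, since a non-commutative distribution is determined by its Boolean cumulant functional via \eqref{Boolean}), this equality of parameters forces $\mu^{\uplus\eta} = \nu$.

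Finally, by Proposition~\ref{Prop:Fully-matricial} it suffices to verify everything for $\mc{B}$ itself rather than its matrix amplifications, since both sides pass to the fully matricial extension compatibly (the amplification of $\mu^{\uplus\eta}$ is $\widetilde{\mu}^{\uplus\widetilde{\eta}}$, and amplification commutes with coefficient stripping), and the collection of the $M$-series over all matrix sizes determines the distribution; so working with formal power series in a single $b \in \mc{B}$ loses nothing. The one point that requires a little care—and the main obstacle—is the precise definition of $\mu^{\uplus\eta}$ for a completely positive $\eta$ that is not multiplicative: one must check that "apply $\eta$ to every Boolean cumulant" is exactly the operation defined in \cite{Ans-Bel-Fev-Nica} and that the resulting functional is again a bona fide non-commutative distribution (complete positivity), which is where the complete positivity of $\eta$, and of the $\alpha_i$, enters. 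Granting that—which is precisely the content of the cited lemmas—the argument is a direct comparison of Boolean cumulants, and in particular it reproves the scalar statement of \cite{Boz-Wys}.
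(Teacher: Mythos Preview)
Your proposal is correct and follows essentially the same route as the paper: identify $\mu = \mu_{(\lambda_1,\beta_1)}$ with $\beta_1 = \alpha_1\circ\mu_2$ via Proposition~\ref{Prop:Shift}, apply the Boolean power to get $\mu^{\uplus\eta} = \mu_{(\eta[\lambda_1],\eta\circ\beta_1)}$, and then run Proposition~\ref{Prop:Shift} in reverse. The only difference is packaging: the paper invokes Theorem~7.5 of \cite{Ans-Bel-Fev-Nica} directly for the identity $(\mu_{(\lambda,\beta)})^{\uplus\eta} = \mu_{(\eta[\lambda],\eta\circ\beta)}$, whereas you unpack that identity in terms of Boolean cumulants and add an (unnecessary but harmless) fully matricial remark.
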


\begin{proof}
According to Theorem~7.5 of \cite{Ans-Bel-Fev-Nica},
\[
\mu^{\uplus \eta}_{(\lambda_1, \beta_0)}
= \mu_{(\eta[\lambda_1], \eta \circ \beta_0)}
= \mu_{(\eta[\lambda_1], (\eta \circ \alpha_1) \circ \mu_1)}.
\]
The result follows from Proposition~\ref{Prop:Shift}.
\end{proof}

The remainder of the section treats examples of specific $\mc{B}$-valued Jacobi-Szeg\H{o} distributions.

\begin{Prop}
For $\lambda \in \mc{B}$ self-adjoint, the atomic distribution $\delta_\lambda$ has Jacobi parameters
\[
\mu = J
\begin{pmatrix}
\lambda, & 0, & 0, & 0, & \ldots \\
0, & 0, & 0, & 0, & \ldots
\end{pmatrix}.
\]
\end{Prop}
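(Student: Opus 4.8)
The plan is to reduce everything to the explicit moment formula of Proposition~\ref{basejacobi}. First I would fix the meaning of $\delta_\lambda$: it is the distribution of the ``constant'' self-adjoint random variable equal to $\lambda$, i.e. the element of $\Sigma(\mc{B})$ determined by
\[
\delta_\lambda[b_0 X b_1 X \ldots X b_n] = b_0 \lambda b_1 \lambda \cdots \lambda b_n .
\]
(That this is a genuine non-commutative distribution is immediate from Proposition~\ref{Prop:Algebraic-distr}, taking the pre-Hilbert bimodule to be $\mc{B}$ itself with $\ip{b}{c}=b^\ast c$ and $x$ the map $b \mapsto \lambda b$, which is symmetric since $\lambda$ is self-adjoint.) So the claim is exactly the identity $\mu[b_0 X b_1 X \ldots X b_n] = b_0 \lambda b_1 \lambda \cdots \lambda b_n$ for all $n$ and all $b_0,\ldots,b_n \in \mc{B}$, where $\mu$ is the Jacobi--Szeg\H{o} distribution with $\lambda_1 = \lambda$, all other $\lambda_i = 0$, and all $\alpha_i = 0$.

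Next I would apply formula~\eqref{Moments}. Since every $\alpha_i$ is the zero map on $\mc{B}$, the term $T_\pi(b_0 X \ldots X b_n)$ vanishes for every $\pi \in \NC_{1,2}(n)$ that contains at least one pair block, because such a $T_\pi$ is built by applying some $\alpha_i \equiv 0$. The unique surviving partition is therefore $\pi = \set{\set{1},\set{2},\ldots,\set{n}}$, all of whose blocks are singletons of depth $1$. For that $\pi$, each $X$ is replaced by $\lambda_{d(V,\pi)} = \lambda_1 = \lambda$, so $T_\pi(b_0 X b_1 X \ldots X b_n) = b_0 \lambda b_1 \lambda \cdots \lambda b_n$. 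Hence $\mu[b_0 X \ldots X b_n] = b_0 \lambda b_1 \lambda \cdots \lambda b_n = \delta_\lambda[b_0 X \ldots X b_n]$, as desired.

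As an alternative route (and a cross-check) I would instead invoke Proposition~\ref{Prop:Shift}: with $\lambda_1 = \lambda$ and $\alpha_1 = 0$ the continued fraction collapses to $M_\mu(b) = (1_{\mc{B}} - \lambda b)^{-1} = \sum_{n\ge 0}(\lambda b)^n$, which coincides with $M_{\delta_\lambda}(b) = \sum_{n\ge 0}\delta_\lambda[(Xb)^n] = \sum_{n\ge 0}(\lambda b)^n$; the same computation applies verbatim to all matricial amplifications, so by Proposition~\ref{Prop:Fully-matricial}(b) this already forces $\mu = \delta_\lambda$.

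I do not expect any real obstacle here. The only points requiring a moment's care are pinning down $\delta_\lambda$ as an honest element of $\Sigma(\mc{B})$ (handled by Proposition~\ref{Prop:Algebraic-distr}), and observing that in the all-singletons partition every block genuinely has depth $1$, so that the vanishing of $\lambda_i$ for $i \ge 2$ plays no role.
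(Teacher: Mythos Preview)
Your proposal is correct. Your alternative route via Proposition~\ref{Prop:Shift} and the fully matricial extension is exactly the paper's argument: the paper computes $M_{\widetilde{\mu}}(b) = (1_{\mc{B}} - \widetilde{\lambda} b)^{-1}$ and reads off the moments. Your primary route via the combinatorial formula~\eqref{Moments} is a slight variant: it is more direct, since by arguing moment-by-moment you avoid any appeal to matricial amplifications or to Proposition~\ref{Prop:Fully-matricial}(b). Both arguments are one-liners once the machinery is in place, so the difference is purely cosmetic.
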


\begin{proof}
If $\mu$ is the distribution with these Jacobi parameters, then
\[
M_{\widetilde{\mu}}(b) = (1_{\mc{B}} - {\widetilde{\lambda}} b)^{-1},
\]
so that
\[
\mu[X b_1 X \ldots X b_n] = \lambda b_1 \lambda \ldots \lambda b_n
\]
and $\mu[P] = P(\lambda)$ for any $P \in \mc{B} \langle X \rangle$.
\end{proof}

\begin{Prop}
\label{Prop:Mean}
Let $\lambda \in \mc{B}$ be self-adjoint, and
\begin{equation*}
\mu = J
\begin{pmatrix}
\lambda_1, & \lambda_2, & \lambda_3, & \lambda_4, & \ldots \\
\alpha_1, & \alpha_2, & \alpha_3, & \alpha_4, & \ldots
\end{pmatrix}.
\end{equation*}
Then
\[
\mu \boxplus \delta_{\lambda}  = J
\begin{pmatrix}
\lambda_1 + \lambda, & \lambda_2 + \lambda, & \lambda_3 + \lambda, & \lambda_4 + \lambda, & \ldots \\
\alpha_1, & \alpha_2, & \alpha_3, & \alpha_4, & \ldots
\end{pmatrix}.
\end{equation*}
\end{Prop}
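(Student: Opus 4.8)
The plan is to realize both sides of the stated equality as distributions of operators in one and the same $\mc{B}$-valued probability space, exploiting the elementary fact that an element of $\mc{B}$ is $\mc{B}$-free from everything.

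Concretely, I would use the pre-Hilbert $\mc{B}$-bimodule $H$ of Propositions~\ref{Prop:Algebraic-distr} and~\ref{Prop:Jacobi-definition}: $H$ is $\mc{B}\langle X\rangle$ equipped with the inner product~\eqref{Inner-product}, with cyclic vector $\xi = 1_{\mc{B}}$, and $x = a^\ast + p + a$ is the symmetric operator whose distribution with respect to the vector state $E = \ip{\xi}{\cdot\, \xi}$ is $\mu$. Let $\mc{A}$ be the algebra generated by $x$ and the left multiplication action of $\mc{B}$ on $H$, so that $(\mc{A}, E, \mc{B})$ is a $\mc{B}$-valued probability space (a $C^\ast$-one if the Jacobi parameters are uniformly bounded, by the corollaries to Proposition~\ref{Prop:Shift}, and an algebraic one otherwise, which suffices) in which $x$ has distribution $\mu$. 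Now observe that the inner product~\eqref{Inner-product} depends only on the sequence $(\alpha_i)$, so the bimodule $H$, the cyclic vector $\xi$, and the operators $a^\ast$ and $a$ attached by Proposition~\ref{Prop:Jacobi-definition} to the shifted parameters $\set{(\lambda_i + \lambda)_{i=1}^\infty, (\alpha_i)_{i=1}^\infty}$ are literally the same as for $\mu$. Only $p$ changes, and since $(p + L_\lambda)(b_0 X b_1 \ldots X b_n) = \lambda_n b_0 X \ldots X b_n + \lambda b_0 X \ldots X b_n = (\lambda_n + \lambda) b_0 X \ldots X b_n$, where $L_\lambda$ is left multiplication by $\lambda$, the Jacobi operator of the shifted parameters is exactly $x' = x + L_\lambda$ on $H$. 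Hence the right-hand side of the proposition is the distribution of $x + L_\lambda$ with respect to $E$.

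It then remains to identify this distribution with $\mu \boxplus \delta_\lambda$. The operator $L_\lambda$ is the image of $\lambda \in \mc{B}$ under the embedding $\mc{B} \hookrightarrow \mc{A}$, so it has distribution $\delta_\lambda$, and it is $\mc{B}$-free from $x$: for any $P \in \mc{B}\langle X\rangle$ one has $P(L_\lambda) \in \mc{B}$ and $E[P(L_\lambda)] = P(L_\lambda)$, so a polynomial in $L_\lambda$ is centered only when it equals $0$, and the alternating-centered-product condition defining $\mc{B}$-freeness holds vacuously. By the definition of free convolution, $x + L_\lambda$ therefore has distribution $\mu \boxplus \delta_\lambda$, which together with the previous paragraph proves the claim.

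There is essentially no obstacle here: the construction of the probability space around the possibly degenerate inner product~\eqref{Inner-product} was already dealt with in Proposition~\ref{Prop:Algebraic-distr}, and the fact that $\mc{B}$ is free from everything is immediate from the definition. If one preferred to stay on the level of generating functions, an alternative is to induct on the continued-fraction recursion~\eqref{mu-mu2} of Proposition~\ref{Prop:Shift}, using that $\delta_\lambda$ has free cumulant generating function $R_{\delta_\lambda}(b) = \lambda b$, so that additivity of $R$-transforms gives $R_{\mu \boxplus \delta_\lambda}(b) = R_\mu(b) + \lambda b$ and hence the replacement $\lambda_1 \mapsto \lambda_1 + \lambda$ at every level of the fraction; but tracking this propagation is heavier than the operatorial argument above.
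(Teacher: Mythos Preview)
Your argument is correct and is genuinely different from the paper's. Both proofs begin with the same observation—that $\lambda\in\mc{B}$ is automatically $\mc{B}$-free from anything, so $\mu\boxplus\delta_\lambda$ is the distribution of $X+\lambda$—but then diverge. You stay in the Fock-type model of Proposition~\ref{Prop:Jacobi-definition} and note that the inner product~\eqref{Inner-product} depends only on the $\alpha_i$, so adding $\lambda$ to every $\lambda_i$ replaces $p$ by $p+L_\lambda$ and hence replaces the Jacobi operator $x$ by $x+L_\lambda$; the identification of distributions is then immediate at the level of moments, with no generating functions needed. The paper instead computes $M_{\mu\boxplus\delta_\lambda}(b)=(1-\lambda b)^{-1}M_\mu\bigl(b(1-\lambda b)^{-1}\bigr)$, feeds this into the recursion~\eqref{mu-mu2} to see that $\lambda_1$ is shifted by $\lambda$ and $\mu_2$ is replaced by $\mu_2\boxplus\delta_\lambda$, iterates, and then invokes the fully matricial extension of Proposition~\ref{Prop:Fully-matricial} to conclude. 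Your route is shorter and bypasses both the continued-fraction manipulation and the passage to matrix amplifications; the paper's route has the side benefit of recording the explicit substitution formula for the moment generating function of a translate, which may be of independent use.
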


\begin{proof}
Let $X$ be an operator in a $\mc{B}$-valued probability space $(\mc{A}, E, \mc{B})$ (see the Introduction). Then directly from the definition of freeness, $\lambda$ and $X$ are $\mc{B}$-free, so that $\mu \boxplus \delta_{\lambda}$ is the distribution of $X + \lambda$. Thus
\[
\begin{split}
M_{\mu \boxplus \delta_{\lambda}}(b)
& = \sum_{n=0}^\infty \mu\left[ ((X + \lambda) b)^n \right] \\
& = \sum_{k=0}^\infty (1 - \lambda b)^{-1} \mu \left[ (X b (1 - \lambda b)^{-1})^k \right]
= (1 - \lambda b)^{-1} M_\mu(b (1 - \lambda b)^{-1}).
\end{split}
\]
Plugging this into equation~\eqref{mu-mu2}, we get
\[
\begin{split}
M_{\mu \boxplus \delta_{\lambda}}(b)
& = (1 - \lambda b)^{-1} \Bigl( 1 - \lambda_1 b (1 - \lambda b)^{-1} - \alpha_1 \left[ b (1 - \lambda b)^{-1} M_{\mu_2}(b (1 - \lambda b)^{-1}) \right] b (1 - \lambda b)^{-1} \Bigr)^{-1} \\
& = \Bigl( 1 - (\lambda_1 + \lambda) b - \alpha_1 \left[ b M_{\mu_2 \boxplus \delta_{\lambda}}(b) \right] b \Bigr)^{-1}.
\end{split}
\]
Repeating this calculation for $\mu_2, \mu_3$, etc., in the fully matrical setting of Proposition~\ref{Prop:Fully-matricial}, we obtain the conclusion.
\end{proof}

\begin{Prop}
For $\alpha \in \mc{CP}(\mc{B})$, the semicircular distribution with covariance $\alpha$ has Jacobi parameters
\[
\mu = J
\begin{pmatrix}
0, & 0, & 0, & 0, & \ldots \\
\alpha, & \alpha, & \alpha, & \alpha, & \ldots
\end{pmatrix}.
\]
\end{Prop}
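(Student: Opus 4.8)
The plan is to recognize $\mu = J\begin{pmatrix} 0,0,0,\ldots \\ \alpha,\alpha,\alpha,\ldots \end{pmatrix}$ as the $\mc{B}$-valued semicircular element of covariance $\alpha$, either by matching moments or by matching Cauchy transforms. The observation underlying both routes is that this pair of Jacobi sequences is invariant under the shift $i \mapsto i+1$, so in the notation of Proposition~\ref{Prop:Shift} one has $\mu_n = \mu$ for every $n$, and in particular $\mu_2 = \mu$.

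For the moment computation: by Proposition~\ref{basejacobi}, $\mu[b_0 X \ldots X b_n] = \sum_{\pi \in \NC_{1,2}(n)} T_\pi(b_0 X \ldots X b_n)$. Since $\lambda_i = 0$ for all $i$, every $\pi$ containing a singleton block contributes $0$; thus the sum runs over $\pi \in \NC_2(n)$ only (so that all odd moments vanish), and since all $\alpha_i$ equal the single map $\alpha$, the definition of $T_\pi$ says precisely that $T_\pi(b_0 X \ldots X b_n)$ is formed by inserting nested copies of $\alpha$ according to the nesting structure of $\pi$, with no dependence on the depths. This is exactly the moment expansion of the $\mc{B}$-valued semicircular distribution with covariance $\alpha$ (see \cite{SpeHab}), which identifies $\mu$ as claimed; equivalently, the only nonvanishing free cumulant of $\mu$ is the second one, $\kappa_2^\mu[X b X] = \alpha[b]$, another standard characterization.

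For the Cauchy transform route, which parallels the preceding treatments of $\delta_\lambda$ and $\mu \boxplus \delta_\lambda$, one substitutes $\lambda_1 = 0$, $\alpha_1 = \alpha$ and $M_{\mu_2} = M_\mu$ into \eqref{mu-mu2} to obtain the fixed-point equation $M_\mu(b) = \bigl( 1_{\mc{B}} - \alpha[\, b M_\mu(b)\,]\, b \bigr)^{-1}$, equivalently $G_\mu(b) = (b - \alpha[G_\mu(b)])^{-1}$; the same reasoning applies verbatim in the fully matricial setting of Proposition~\ref{Prop:Fully-matricial} (where, by part (a), the parameters become $(0; I_d \otimes \alpha)$), which determines $\mu$, and the standard characterization of the $\mc{B}$-valued semicircular distribution by this subordination equation then finishes the proof. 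The argument has no real obstacle: the only genuine points are the elementary identity $\mu_2 = \mu$, which turns \eqref{mu-mu2} into a true fixed point, and the invocation of the standard description of the operator-valued semicircular — either as a sum over non-crossing pair partitions or through its Cauchy transform — against which the Jacobi-Szeg\H{o} distribution is matched.
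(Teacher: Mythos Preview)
Your proposal is correct, and your Cauchy-transform route is essentially the paper's own argument: the paper derives the same fixed-point equation $M_{\widetilde{\mu}}(b) = (1_{\mc{B}} - \widetilde{\alpha}[b M_{\widetilde{\mu}}(b)] b)^{-1}$ from the shift-invariance $\mu_2 = \mu$, rewrites it as $b G_{\widetilde{\mu}}(b) = 1_{\mc{B}} + \widetilde{\alpha}[G_{\widetilde{\mu}}(b)] G_{\widetilde{\mu}}(b)$, and then identifies this with equation~(1.2) of \cite{Helton-Far-Speicher} (and notes $R_\mu(b) = \alpha[b] b$). Your moment route via Proposition~\ref{basejacobi} and the $\NC_2$ expansion from \cite{SpeHab} is a valid alternative the paper does not spell out, but it leads to the same conclusion with no additional difficulty.
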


\begin{proof}
If $\mu$ is the distribution with these Jacobi parameters, then
\begin{equation}
\label{Semicircular1}
M_{\widetilde{\mu}}(b) = \left( 1_{\mc{B}} - {\widetilde{\alpha}}[b M_{\widetilde{\mu}}(b)] b \right)^{-1},
\end{equation}
or equivalently
\[
M_{\widetilde{\mu}}(b) = 1_{\mc{B}} + {\widetilde{\alpha}}[b M_{\widetilde{\mu}}(b)] b M_{\widetilde{\mu}}(b).
\]
In terms of the Cauchy transform, this says
\[
b G_{\widetilde{\mu}}(b) = 1_{\mc{B}} + {\widetilde{\alpha}}[G_{\widetilde{\mu}}(b)] G_{\widetilde{\mu}}(b),
\]
which is equation (1.2) from \cite{Helton-Far-Speicher} (with $\eta$ from that paper being our $\alpha$). So $\mu$ is the centered $\mc{B}$-valued semicircular distribution with covariance $\alpha$. Note also that its free cumulant generating function is $R_\mu(b) = \alpha[b] b$, as it should be.
\end{proof}

\begin{Example}
\label{Ex:Bernoulli}
For $\lambda_1, \lambda_2 \in \mc{B}$ self-adjoint, and $\alpha \in \mc{CP}(\mc{B})$, we define a general $\mc{B}$-valued Bernoulli distribution via its Jacobi parameters
\[
\mu = J
\begin{pmatrix}
\lambda_1, & \lambda_2, & 0, & 0, & \ldots \\
\alpha, & 0, & 0, & 0, & \ldots
\end{pmatrix}.
\]
More explicitly,
\[
M_\mu(b) = \left( 1_{\mc{B}} - \lambda_1 b - \alpha \left[ b \left( 1_{\mc{B}} - \lambda_2 b \right)^{-1} \right] b \right)^{-1}.
\]
The name is justified by two particular cases. First, if all $\lambda_i \equiv 0$, then
\[
M_{\widetilde{\mu}}(b) = \left( 1_{\mc{B}} - {\widetilde{\alpha}} \left[ b \right] b \right)^{-1}.
\]
Comparing this with Corollary 2.2 from \cite{Belinschi-Popa-Vinnikov-Semicircle} (with slightly different notation), we see that $\mu$ is the centered $\mc{B}$-valued Bernoulli law with covariance $\alpha$. The second particular case is given in the following proposition.
\end{Example}

\begin{Prop}
For $0 < t < 1$ and $a, c \in \mc{B}^{sa}$, the distribution
\[
t \delta_a + (1-t) \delta_c
\]
is of the form in the preceding example, with
\begin{align*}
\lambda_1 &= t a + (1-t) c, \\
\lambda_2 & = (1-t) a + t c, \\
\alpha[b] & = t (1-t) (a - c) b (a-c).
\end{align*}
\end{Prop}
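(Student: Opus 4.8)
The plan is to compute the moment generating function of $\nu := t\delta_a + (1-t)\delta_c$ directly and to match it against the explicit continued fraction displayed in Example~\ref{Ex:Bernoulli}. Note first that, since the atomic distributions act by $\delta_a[P] = P(a)$ and $\delta_c[P] = P(c)$, the convex combination $\nu$ is again a non-commutative distribution, with $\nu[P] = t\,P(a) + (1-t)\,P(c)$; in particular it is exponentially bounded. By Proposition~\ref{Prop:Fully-matricial}(b) it suffices to identify $M_{\widetilde{\nu}}$ with the corresponding matricial continued fraction, and since the computation below is insensitive to replacing $\mc{B}$ by $M_d(\mc{B})$ (and $a, c, \lambda_1, \lambda_2, \alpha$ by their amplifications), I will suppress the tildes and work over $\mc{B}$. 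Then, as a formal power series,
\[
M_\nu(b) = \sum_{n=0}^\infty \bigl( t\,(ab)^n + (1-t)(cb)^n \bigr) = t\,(1-ab)^{-1} + (1-t)\,(1-cb)^{-1}.
\]

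Next I would write $M_\nu(b)$ as a product. Using the trivial identities $(1-ab)^{-1}(1-cb)(1-cb)^{-1} = (1-ab)^{-1}$ and $(1-ab)^{-1}(1-ab)(1-cb)^{-1} = (1-cb)^{-1}$, one gets
\[
M_\nu(b) = (1-ab)^{-1}\,\bigl[\, t(1-cb) + (1-t)(1-ab)\,\bigr]\,(1-cb)^{-1} = (1-ab)^{-1}\,(1-\lambda_2 b)\,(1-cb)^{-1},
\]
because $t c + (1-t) a = \lambda_2$. Hence $M_\nu(b)^{-1} = (1-cb)\,(1-\lambda_2 b)^{-1}\,(1-ab)$, and the remaining task is to expand this expression and recognize the right-hand side of the continued fraction in Example~\ref{Ex:Bernoulli}, namely $1_{\mc{B}} - \lambda_1 b - \alpha[b(1-\lambda_2 b)^{-1}]b$.

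For the expansion I would abbreviate $s := a - c$ and $W := (1-\lambda_2 b)^{-1}$, and use the elementary relations $a = \lambda_2 + t s$, $c = \lambda_2 - (1-t)s$, $\lambda_1 = \lambda_2 + (2t-1)s$ (all immediate from the definitions of $\lambda_1,\lambda_2$) together with the resolvent identities $W\lambda_2 b = \lambda_2 b\,W = W - 1_{\mc{B}}$. Substituting the expressions for $a$ and $c$ into $(1-cb)W(1-ab) = W - Wab - cbW + cbWab$ and repeatedly applying these identities, every term containing a single factor of $W$ cancels, and what remains is
\[
M_\nu(b)^{-1} = 1_{\mc{B}} - \lambda_2 b + (1-2t)\, s b - t(1-t)\, s b\, W\, s b.
\]
Since $-\lambda_2 b + (1-2t) s b = -\bigl(\lambda_2 + (2t-1)s\bigr)b = -\lambda_1 b$, and $t(1-t)\,sb\,W\,sb = \alpha\bigl[b(1-\lambda_2 b)^{-1}\bigr]\,b$ with $\alpha[y] = t(1-t)(a-c)\,y\,(a-c)$, this is exactly $1_{\mc{B}} - \lambda_1 b - \alpha[b(1-\lambda_2 b)^{-1}]b$. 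Thus $M_\nu$ coincides with the moment generating function of the Jacobi--Szeg\H{o} distribution of Example~\ref{Ex:Bernoulli} with the stated parameters, and the same being true in every matrix amplification, Proposition~\ref{Prop:Fully-matricial}(b) finishes the proof. The computation is entirely routine; the only point demanding care is the non-commutative bookkeeping in the last expansion, where one must not commute $b$ past $a$, $c$, or $\lambda_2$, and must track on which side of a factor $\lambda_2 b$ a given resolvent $W$ sits when applying $W\lambda_2 b = W - 1_{\mc{B}}$ versus $\lambda_2 b\,W = W - 1_{\mc{B}}$, so I do not anticipate any genuine obstacle.
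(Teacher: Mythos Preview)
Your argument is correct and complete; the algebraic manipulation checks out, and you are right to invoke Proposition~\ref{Prop:Fully-matricial} so that the identity of moment generating functions in every matrix amplification pins down the distribution.

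Your route, however, is genuinely different from the paper's. The paper first uses Proposition~\ref{Prop:Mean} to translate by $-c$ and reduce to the case $c=0$, and then works directly with the \emph{moments}: it expands $t\,b_0 a b_1 a \cdots a b_n$ by writing each interior factor of $a$ as $(t+(1-t))a$, thereby producing exactly the sum in formula~\eqref{Jacobi-sum} for $\lambda_1 = ta$, $\lambda_2 = (1-t)a$, $\alpha[b]=t(1-t)\,aba$ and $\mu_2=\delta_{\lambda_2}$. That argument never touches the generating function and therefore never needs the fully-matricial step; the identification happens at the level of the combinatorial moment formula. Your approach instead packages everything into the single identity $M_\nu(b)^{-1}=(1-cb)(1-\lambda_2 b)^{-1}(1-ab)$ and expands it algebraically; this is slicker and avoids the preliminary translation to $c=0$, at the modest cost of having to appeal to Proposition~\ref{Prop:Fully-matricial} at the end and of the bookkeeping you flag in your last paragraph. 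Either method is short; yours highlights the resolvent/continued-fraction structure, the paper's highlights the interval-partition structure of the Bernoulli moments.
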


\begin{proof}
By translation, it suffices to prove this for $c=0$. So let $\lambda_1 = t a$, $\lambda_2 = (1-t) a$, $\alpha[b] = t (1-t) a b a$. Then
\[
\begin{split}
& \Bigl((1-t) \delta_0 + t \delta_a \Bigr) [b_0 X \ldots X b_n]
= t b_0 a \ldots a b_n
= b_0 (t + (1-t)) a \ldots (t + (1-t)) a b_{n-1} t a b_n \\
&\qquad = \sum_{k=1}^n \sum_{1 \leq i_1 < i_2 < \ldots < i_k = n}
b_0 \Bigl((1-t) a b_1 (1-t) a b_2 \ldots t a \Bigr) b_{i_1} \\
&\qquad\qquad \Bigl((1-t) a b_{i_1 + 1} (1-t) a b_{i_1 + 2} \ldots t a \Bigr) b_{i_2} \ldots \Bigl((1-t) a b_{i_{k-1} + 1} (1-t) a b_{i_{k-1} + 2} \ldots t a \Bigr) b_n \\
&\qquad = \sum_{k=1}^n \sum_{1 \leq i_1 < i_2 < \ldots < i_k = n}
b_0 \Bigl(a b_1 (1-t) a b_2 \ldots t (1-t) a \Bigr) b_{i_1} \\
&\qquad\qquad \Bigl(a b_{i_1 + 1} (1-t) a b_{i_1 + 2} \ldots t (1-t) a \Bigr) b_{i_2} \ldots \Bigl(a b_{i_{k-1} + 1} (1-t) a b_{i_{k-1} + 2} \ldots t (1-t) a \Bigr) b_n \\
&\qquad = \sum_{k=1}^n \sum_{1 \leq i_1 < i_2 < \ldots < i_k = n}
b_0 \alpha[b_1 \lambda_2 b_2 \ldots b_{i_1 -1}] b_{i_1} \\
&\qquad\qquad \alpha[b_{i_1 + 1} \lambda_2 b_{i_1 + 2} \ldots b_{i_2 - 1}] b_{i_2} \ldots \alpha[b_{i_{k-1} + 1} \lambda_2 b_{i_{k-1} + 2} \ldots b_{n-1}] b_n.
\end{split}
\]
where $\alpha[\emptyset] = t a = \lambda_1$, which is precisely formula~\eqref{Jacobi-sum} for $\lambda_1, \lambda_2, \alpha$ as above and $\mu_2 = \delta_{\lambda_2}$.
\end{proof}


\begin{Prop}
The centered free Poisson distribution with parameters $(\lambda, \alpha)$ has Jacobi parameters
\[
\mu = J
\begin{pmatrix}
0, & \lambda, & \lambda, & \lambda, & \ldots \\
\alpha, & \alpha, & \alpha, & \alpha, & \ldots
\end{pmatrix}.
\]
\end{Prop}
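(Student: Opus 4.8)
The plan is to identify the Jacobi-Szeg\H{o} distribution with the given parameters as the centered free Poisson (compound Poisson) distribution by matching either the continued-fraction expansion of the moment generating function against the known $R$-transform, or equivalently by recognizing it as a shift of a semicircular distribution. I would work throughout in the fully matricial setting of Proposition~\ref{Prop:Fully-matricial}, so that it suffices to verify the scalar-coefficient identities; the matricial statement then follows verbatim.

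The cleanest route: recall from the preceding proposition that $\nu = J\left(\begin{smallmatrix} 0, & 0, & 0, & \ldots \\ \alpha, & \alpha, & \alpha, & \ldots\end{smallmatrix}\right)$ is the centered $\mc{B}$-valued semicircular with covariance $\alpha$, so by that proof $M_\nu$ satisfies $M_\nu(b) = (1_{\mc{B}} - \alpha[b M_\nu(b)] b)^{-1}$. Now observe that the distribution $\mu$ in the statement is related to $\nu$ by coefficient stripping (Proposition~\ref{Prop:Shift}): writing $\mu_2 = J\left(\begin{smallmatrix} \lambda, & \lambda, & \ldots \\ \alpha, & \alpha, & \ldots\end{smallmatrix}\right)$, equation~\eqref{mu-mu2} gives
\[
M_\mu(b) = \left( 1_{\mc{B}} - \alpha[b M_{\mu_2}(b)] b \right)^{-1},
\]
and by Proposition~\ref{Prop:Mean}, $\mu_2 = \nu_2 \boxplus \delta_\lambda$ where $\nu_2 = \nu$ (the tail of a constant-parameter semicircular is itself). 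Hence $M_{\mu_2}(b) = (1 - \lambda b)^{-1} M_\nu(b(1-\lambda b)^{-1})$. Substituting and simplifying should collapse the expression to the fixed-point equation characterizing the free Poisson, namely $M_\mu(b) = (1_{\mc{B}} - \alpha[b(1_{\mc{B}} - \lambda b)^{-1} M_\mu(b)\cdots])$ — more precisely, I expect to land on the relation
\[
b G_\mu(b) = 1_{\mc{B}} + \alpha\left[ (1_{\mc{B}} - \lambda G_\mu(b))^{-1} \right] G_\mu(b),
\]
or the equivalent statement that the free cumulant generating function is $R_\mu(b) = \alpha[(1_{\mc{B}} - \lambda b)^{-1} b]$, which by definition (e.g.\ in \cite{Ans-Bel-Fev-Nica}, cf.\ the discussion after \eqref{Free-cumulant-GF}) is that of the centered compound free Poisson with rate/jump data $(\lambda,\alpha)$. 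Alternatively, one can invoke Proposition~\ref{Prop:Mean} together with the standard fact that a free Poisson with parameter $(\lambda,\alpha)$ is $\boxplus$-infinitely divisible with free cumulants $\kappa_{n+2}(\cdot) = \alpha[\lambda^{\,\cdot}\,\lambda \cdots]$ for $n \geq 0$ (and $\kappa_1 = 0$ after centering), reading these cumulants off the continued fraction via the moment–cumulant relation.

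The main obstacle is purely the algebra of the continued-fraction manipulation in the non-commutative setting: one must be careful that all the ``$b$'' factors and the completely positive maps $\alpha$ are inserted in the correct order, since $\mc{B}$ is non-commutative and $\alpha$ is only a positive map, not a homomorphism. Concretely, the step where $M_{\mu_2}(b) = (1-\lambda b)^{-1}M_\nu(b(1-\lambda b)^{-1})$ gets fed back into $M_\mu(b) = (1 - \alpha[bM_{\mu_2}(b)]b)^{-1}$ requires rewriting $\alpha[\,b(1-\lambda b)^{-1}M_\nu(\cdots)\,]$ so that it matches the self-referential free-Poisson equation for $M_\mu$ itself; verifying this fixed-point consistency (rather than just formally iterating) is where care is needed. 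Everything else — reduction to $d=1$, convergence of the formal series, self-adjointness — is handled by results already established above.
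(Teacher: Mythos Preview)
Your approach is essentially the paper's: strip one level of the continued fraction, recognize the tail $\mu_2$ as a mean-$\lambda$ semicircular, eliminate its moment generating function to get a self-referential equation for $M_\mu$, and read off $R_\mu(b)=\alpha[b(1-\lambda b)^{-1}]b$. The only difference is that the paper avoids your detour through Proposition~\ref{Prop:Mean} and the variable change $b\mapsto b(1-\lambda b)^{-1}$: it writes the tail $\nu:=\mu_2$ directly via the non-centered semicircular equation $M_\nu(b)=(1-\lambda b-\alpha[bM_\nu(b)]b)^{-1}$ alongside $M_\mu(b)=(1-\alpha[bM_\nu(b)]b)^{-1}$, so that $M_\mu^{-1}-M_\nu^{-1}=\lambda b$ and hence $M_\nu=M_\mu(1-\lambda b M_\mu)^{-1}$; substituting this back yields the free-Poisson fixed point in one line. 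Your route works too, but the algebra you flag as the ``main obstacle'' evaporates once you compare $M_\mu^{-1}$ and $M_{\mu_2}^{-1}$ directly rather than feeding the shifted centered-semicircular formula through. (Also, your target $R$-transform is missing a trailing $b$: it should be $R_\mu(b)=\alpha[b(1-\lambda b)^{-1}]\,b$.)
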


\begin{proof}
If $\mu$ is the distribution with these Jacobi parameters, then
\begin{equation}
\label{Shift}
M_{\mu}(b) = \left( 1_{\mc{B}} - \alpha \left[ b \left( 1_{\mc{B}} - \lambda b - \alpha [b \ldots] b \right)^{-1} \right] b \right)^{-1}
= \left( 1_{\mc{B}} - \alpha \left[ b M_{\nu}(b) \right] b \right)^{-1},
\end{equation}
where $\nu$ is a semicircular distribution with mean $\lambda$ and covariance $\alpha$. So
\begin{equation}
\label{Semicircular}
M_{\nu}(b) = \left( 1_{\mc{B}} - \lambda b - \alpha \left[ b M_{\nu}(b) \right] b \right)^{-1}.
\end{equation}
Combining equations \eqref{Shift} and \eqref{Semicircular}, we get
\[
M_\nu(b) = M_\mu(b) (1_{\mc{B}} - \lambda b M_\mu(b))^{-1},
\]
so
\[
M_\mu(b) = \left( 1_{\mc{B}} - \alpha \left[ b M_\mu(b) (1_{\mc{B}} - \lambda b M_\mu(b))^{-1} \right] b \right)^{-1}.
\]
Thus
\[
M_\mu(b) = 1_{\mc{B}} + \alpha \left[ b M_\mu(b) (1_{\mc{B}} - \lambda b M_\mu(b))^{-1} \right] b M_\mu(b).
\]
So
\[
R_\mu(b) = \alpha[b (1_{\mc{B}} - \lambda b)^{-1}] b = \sum_{n=0}^\infty \alpha[b (\lambda b)^n].
\]
By applying the arguments above to $\widetilde{\mu}$ as in Proposition~\ref{Prop:Fully-matricial}, and comparing with Definition~9.3 in \cite{Ans-Bel-Fev-Nica} (which extends Definition~4.4.1 in \cite{SpeHab}), we see that $\mu$ is the $\mc{B}$-valued free Poisson distribution with parameters $(\lambda, \alpha)$.
\end{proof}

See Corollary~\ref{Cor:Poisson} for a follow-up.

\begin{Remark}
For general (not necessarily self-adjoint) $\set{\lambda_i}$ and general (not necessarily positive) $\set{\alpha_i}$, we may still define
\[
\mu = J
\begin{pmatrix}
\lambda_1, & \lambda_2, & \lambda_3, & \lambda_4, & \ldots \\
\alpha_1, & \alpha_2, & \alpha_3, & \alpha_4, & \ldots
\end{pmatrix}.
\]
via the combinatorial formula in Proposition~\ref{basejacobi}. This $\mu$ is now only an algebraic non-commutative distribution. Then numerous results above still hold. We may also define $\mc{B}$-valued semicircular, free Poisson etc. distributions with such more general Jacobi parameters.
\end{Remark}

\begin{Remark}
\label{Remark:Phi-B}
The following objects were defined and studied in Section~6 of \cite{Ans-Bel-Fev-Nica}. For any linear map $\alpha : \mc{B} \rightarrow \mc{B}$, one defined a transformation $\mf{B}_\alpha: \Sigma_{alg}(\mc{B}) \rightarrow \Sigma_{alg}(\mc{B})$, which satisfies
\begin{equation}
\label{B_eta}
\left( \mf{B}_\alpha[\mu] \right)^{\uplus (I + \alpha)} = \mu^{\boxplus (I + \alpha)}.
\end{equation}
For such $\alpha$ and a self-adjoint $\lambda \in \mc{B}$, we can define an (algebraic, not necessarily positive) semicircular distribution $\gamma_{\lambda, \alpha}$ with mean $\lambda$ and variance $\alpha$. Then for a certain transformation $\Phi : \Sigma_{alg}(\mc{B}) \rightarrow \Sigma_{alg}(\mc{B})$ also defined there, and any algebraic distribution $\mu$,
\begin{equation}
\label{copy}
\mf{B}_\eta[\Phi[\mu]] = \Phi[\mu \boxplus \gamma_{0, \eta}].
\end{equation}
We will not need the precise definition of $\Phi$ (see Definition~6.8 of \cite{Ans-Bel-Fev-Nica}), but only the following property.
\end{Remark}

\begin{Cor}
\label{Cor:Phi}
If $\mu$ is an algebraic non-commutative distribution with
\[
\mu = J
\begin{pmatrix}
\lambda_1, & \lambda_2, & \lambda_3, & \lambda_4, & \ldots \\
\alpha_1, & \alpha_2, & \alpha_3, & \alpha_4, & \ldots
\end{pmatrix}.
\]
then
\[
\Phi[\mu] = J
\begin{pmatrix}
0, & \lambda_1, & \lambda_2, & \lambda_3, & \lambda_4, & \ldots \\
I, & \alpha_1, & \alpha_2, & \alpha_3, & \alpha_4, & \ldots
\end{pmatrix}.
\]
\end{Cor}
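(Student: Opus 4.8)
The plan is to verify the identity for the moment generating functions, since by Proposition~\ref{Prop:Fully-matricial}(b) it suffices to check the $d=1$ case and moreover it suffices to identify $M_{\Phi[\mu]}(b)$. The strategy is to combine equation~\eqref{copy} of Remark~\ref{Remark:Phi-B} with the relation~\eqref{B_eta} and the explicit effect of $\mf{B}_\eta$ and of free convolution with a semicircular element on Jacobi parameters. First I would observe that the Jacobi-Szeg\H{o} distribution $\nu := J\left(\begin{smallmatrix} 0, & \lambda_1, & \lambda_2, & \ldots \\ I, & \alpha_1, & \alpha_2, & \ldots \end{smallmatrix}\right)$ has moment generating function, by the coefficient-stripping formula~\eqref{mu-mu2} applied at the first level with $\lambda = 0$ and $\alpha = I$,
\[
M_\nu(b) = \left( 1_{\mc{B}} - b M_\mu(b) \, b \right)^{-1},
\]
since stripping the first row leaves exactly $\mu$. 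So it remains to show $M_{\Phi[\mu]}(b) = (1_{\mc{B}} - b M_\mu(b) b)^{-1}$.

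Next I would compute $M_{\Phi[\mu]}$ using~\eqref{copy}. Write $\eta$ for an arbitrary element of $\mc{CP}(\mc{B})$ to be specialized, or better, work directly: by~\eqref{copy} with a generic $\eta$, $\mf{B}_\eta[\Phi[\mu]] = \Phi[\mu \boxplus \gamma_{0,\eta}]$, and by~\eqref{B_eta}, $(\Phi[\mu \boxplus \gamma_{0,\eta}])^{\uplus(I+\eta)} = (\Phi[\mu])^{\boxplus(I+\eta)}$. The cleanest route, however, is to use that $\Phi$ intertwines free and Boolean convolution in a way that pins down its action on moment GFs; concretely, from the defining property of $\Phi$ in~\cite{Ans-Bel-Fev-Nica} (Definition~6.8 and the surrounding lemmas) one has the relation $B_{\Phi[\mu]}(b) = b M_\mu(b) b$ at the level of Boolean cumulant generating functions — equivalently $M_{\Phi[\mu]}(b) = (1_{\mc{B}} - b M_\mu(b) b)^{-1}$ by~\eqref{Boolean}. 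Matching this against the formula for $M_\nu$ above gives $M_{\Phi[\mu]} = M_\nu$, hence $\Phi[\mu] = \nu$ by the fully matricial determinacy, which is the claim (after relabeling $\lambda_0 := 0$, $\alpha_0 := I$ and shifting indices, noting the depth of a block in the partition increases by $1$ under the prepended level).

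For the case where $\mu$ is only algebraic (not necessarily positive), the identity~\eqref{Moments} still defines $\mu$, $M_\mu(b)$ is a well-defined formal power series, and $\Phi$ is defined on all of $\Sigma_{alg}(\mc{B})$ with the same intertwining relation for Boolean cumulant generating functions as formal series, so the same computation goes through verbatim at the level of formal power series; the continued-fraction identity of Proposition~\ref{Prop:Shift} then matches the formal series of $J\left(\begin{smallmatrix} 0, & \lambda_1, & \ldots \\ I, & \alpha_1, & \ldots \end{smallmatrix}\right)$ term by term. I expect the main obstacle to be bookkeeping: correctly extracting from Section~6 of~\cite{Ans-Bel-Fev-Nica} the precise statement that $\Phi$ acts on the moment generating function by $M \mapsto (1 - b M b)^{-1}$ (as opposed to some near-variant with the indices or the identity map $I$ placed differently), and making sure the depth-shift of blocks in $\NC_{1,2}$ under prepending a new first level is accounted for so that $\lambda_i, \alpha_i$ land at depth $i+1$, matching the displayed parameters $0, \lambda_1, \lambda_2, \ldots$ and $I, \alpha_1, \alpha_2, \ldots$.
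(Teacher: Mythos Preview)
Your proposal is correct and takes essentially the same approach as the paper: both extract from \cite{Ans-Bel-Fev-Nica} the characterizing property of $\Phi$ and match it against the coefficient-stripping of Proposition~\ref{Prop:Shift}. The paper's proof is simply the two-line observation that Corollary~7.11 of \cite{Ans-Bel-Fev-Nica} gives $\Phi[\nu] = \mu_{(0,\nu)}$ in the notation of Remark~\ref{Remark:mu_} (which is precisely your identity $B_{\Phi[\mu]}(b) = b\,M_\mu(b)\,b$ rephrased), after which Proposition~\ref{Prop:Shift} with $\lambda_1 = 0$, $\alpha_1 = I$, $\mu_2 = \mu$ finishes the argument; so your initial detour through \eqref{B_eta} and \eqref{copy} is unnecessary, and the bookkeeping obstacle you anticipated is resolved by that specific citation rather than by Definition~6.8.
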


\begin{proof}
By Corollary~7.11 of \cite{Ans-Bel-Fev-Nica},
\[
\Phi[\nu] = \mu_{(0, \nu)}.
\]
So the result follows from Proposition~\ref{Prop:Shift}.
\end{proof}

Scalar-valued free Meixner distributions were defined in \cite{AnsMeixner} and have been extensively studied since. They are, in a certain precise sense, free analogs of the classical Meixner class, which contains most of the explicit distributions encountered in probability theory.

\begin{Example}
Let $\lambda \in \mc{B}$ be self-adjoint, $\eta \in \mc{CP}(\mc{B})$, and $\alpha : \mc{B} \rightarrow \mc{B}$ a linear map such that $\eta + \alpha \in \mc{CP}(\mc{B})$. A (centered) \emph{free Meixner distribution with parameters $(\lambda, \alpha; \eta)$} is the distribution
\begin{equation}
\label{Free-Meixner}
\mathrm{fM}(\lambda, \alpha; \eta) = J
\begin{pmatrix}
0, & \lambda, & \lambda, & \lambda, & \ldots \\
\eta, & \eta + \alpha, & \eta + \alpha, & \eta + \alpha, & \ldots
\end{pmatrix}.
\end{equation}
Note that $\mathrm{fM}(0, 0; \eta)$ are the semicircular distributions; $\mathrm{fM}(\lambda, 0; \eta)$ the free Poisson distributions; and (as discussed in Remark~\ref{Remark:Bernoulli}) $\mathrm{fM}(\lambda, -\eta; \eta)$ the Bernoulli distributions. In particular, $\alpha$ is not assumed to itself be positive.
\end{Example}

\begin{Prop}
\label{Prop:Meixner}
For fixed $\lambda, \alpha$, free Meixner distributions form a free convolution semigroup with respect to parameter $\eta$: whenever $\alpha + \eta_1, \alpha + \eta_2 \in \mc{CP}(\mc{B})$,
\[
\mathrm{fM}(\lambda, \alpha; \eta_1) \boxplus \mathrm{fM}(\lambda, \alpha; \eta_2) = \mathrm{fM}(\lambda, \alpha; \eta_1 + \eta_2)
\]
and if $I + \alpha \in \mc{CP}(\mc{B})$, then $\mathrm{fM}(\lambda, \alpha; \eta) = \mathrm{fM}(\lambda, \alpha; I)^{\boxplus \eta}$. It also follows that for such $\alpha$ and the transformation $\mf{B}_\eta$,
\[
\mf{B}_\eta[\mathrm{fM}(\lambda, \alpha; I)] = \mathrm{fM}(\lambda, \eta + \alpha; I)
\]
\end{Prop}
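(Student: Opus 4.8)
The plan is to compute the free cumulant generating function $R_{\mathrm{fM}(\lambda,\alpha;\eta)}$ and to observe that for fixed $\lambda,\alpha$ it depends \emph{linearly} on $\eta$. The semigroup law (1) is then immediate from the additivity of free cumulant generating functions under $\boxplus$, and (2) and (3) follow with light bookkeeping using results already established above.

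\emph{Main computation.} Fix $\mu=\mathrm{fM}(\lambda,\alpha;\eta)$; its coefficient-stripped distribution $\mu_2$ is the semicircular distribution of mean $\lambda$ and covariance $\alpha+\eta$, itself invariant under further stripping. Equation \eqref{mu-mu2}, applied to $\mu$ and to $\mu_2$, gives
\[
M_\mu(b)=\bigl(1_{\mc{B}}-\eta[bM_{\mu_2}(b)]\,b\bigr)^{-1},\qquad M_{\mu_2}(b)=\bigl(1_{\mc{B}}-\lambda b-(\alpha+\eta)[bM_{\mu_2}(b)]\,b\bigr)^{-1}.
\]
Set $v=bM_{\mu_2}(b)$ and $w=bM_\mu(b)$ (formal power series, or non-commutative analytic functions for $\norm{b}$ small, via Proposition~\ref{Prop:Fully-matricial}). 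Inverting the two identities yields $v^{-1}=b^{-1}-\lambda-(\alpha+\eta)[v]$ and $b^{-1}=w^{-1}+\eta[v]$; substituting the second into the first, the two occurrences of $\eta$ cancel, leaving
\[
v^{-1}=w^{-1}-\lambda-\alpha[v],
\]
an equation free of $\eta$ that determines $v$ as a power series in $w$ with $v=w+O(w^2)$. Meanwhile \eqref{Free-cumulant-GF} gives $R_\mu(w)=M_\mu(b)-1_{\mc{B}}=\eta[v]\,w$. Renaming the argument back to $b$, we get
\[
R_{\mathrm{fM}(\lambda,\alpha;\eta)}(b)=\eta[U(b)]\,b,
\]
where $U(b)=b\,M_{\gamma_{\lambda,\alpha}}(b)$ is the power series solving $U=b(1_{\mc{B}}-\lambda b-\alpha[U]b)^{-1}$, $\gamma_{\lambda,\alpha}$ the (possibly merely algebraic) semicircular distribution of mean $\lambda$ and covariance $\alpha$; crucially $U$ depends only on $\lambda,\alpha$. (For $\alpha=0$ this reads $R=\eta[b(1_{\mc{B}}-\lambda b)^{-1}]b$, the free Poisson formula.)

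\emph{Consequences.} For (1): since $\boxplus$ adds free cumulant generating functions and these determine the distribution, $R_{\mathrm{fM}(\lambda,\alpha;\eta_1)}(b)+R_{\mathrm{fM}(\lambda,\alpha;\eta_2)}(b)=(\eta_1+\eta_2)[U(b)]\,b=R_{\mathrm{fM}(\lambda,\alpha;\eta_1+\eta_2)}(b)$; one uses $\alpha+\eta_i\in\mc{CP}(\mc{B})$ (hence also $\alpha+\eta_1+\eta_2\in\mc{CP}(\mc{B})$) only to know the three objects are genuine distributions. For (2): when $I+\alpha\in\mc{CP}(\mc{B})$, $\mathrm{fM}(\lambda,\alpha;I)$ is a distribution with $R_{\mathrm{fM}(\lambda,\alpha;I)}(b)=U(b)\,b$, and since $\mu^{\boxplus\eta}$ has free cumulants $\eta\circ\kappa^\mu_n$ by \cite{Ans-Bel-Fev-Nica}, its free cumulant generating function is $\eta[U(b)]\,b=R_{\mathrm{fM}(\lambda,\alpha;\eta)}(b)$; equivalently, by \eqref{B_eta} with auxiliary map $\eta-I$, together with (3) and Proposition~\ref{Prop:Boolean}, $\mathrm{fM}(\lambda,\alpha;I)^{\boxplus\eta}=\bigl(\mf{B}_{\eta-I}[\mathrm{fM}(\lambda,\alpha;I)]\bigr)^{\uplus\eta}=\mathrm{fM}(\lambda,\alpha+\eta-I;I)^{\uplus\eta}=\mathrm{fM}(\lambda,\alpha;\eta)$. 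For (3): by Corollary~\ref{Cor:Phi}, $\mathrm{fM}(\lambda,\alpha;I)=\Phi[\gamma_{\lambda,I+\alpha}]$, so \eqref{copy}, together with the additivity of semicircular distributions under $\boxplus$ (means and covariances add — from $R_{\gamma_{\lambda,\beta}}(b)=\lambda b+\beta[b]\,b$ and additivity of $R$, or from Proposition~\ref{Prop:Mean}), gives $\mf{B}_\eta[\mathrm{fM}(\lambda,\alpha;I)]=\Phi[\gamma_{\lambda,I+\alpha}\boxplus\gamma_{0,\eta}]=\Phi[\gamma_{\lambda,I+\alpha+\eta}]=\mathrm{fM}(\lambda,\alpha+\eta;I)$.

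\emph{Main obstacle.} The delicate step is the main computation: carrying out the elimination rigorously — tracking invertibility of $b$, $M_\mu$, $M_{\mu_2}$ and the change of variable $b\leftrightarrow bM_\mu(b)$, which is unambiguous at the level of formal power series or fully matricial analytic functions — and verifying that the two appearances of $\eta$ genuinely cancel. Once $R_{\mathrm{fM}(\lambda,\alpha;\eta)}(b)=\eta[U(b)]\,b$ is in hand, everything else is routine.
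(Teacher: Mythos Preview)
Your proof is correct. The main computation---that $R_{\mathrm{fM}(\lambda,\alpha;\eta)}(b)=\eta[U(b)]\,b$ with $U$ independent of $\eta$---checks out: the cancellation of the two $\eta$'s is genuine, and the identification $U(b)=b\,M_{\gamma_{\lambda,\alpha}}(b)$ follows from the fixed-point equation. (In fact your formula for general $\eta$ is the natural generalization of the quadratic relation for $R_{\mathrm{fM}(\lambda,\alpha;I)}$ that the paper proves immediately after this proposition.)

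Your route to (1) differs from the paper's. The paper does not compute $R$ directly; instead it works at the algebraic level with $\mathrm{fM}(\lambda,\alpha;I)=\Phi[\gamma_{\lambda,I+\alpha}]$ (Corollary~\ref{Cor:Phi}), applies \eqref{B_eta} and \eqref{copy} to obtain $\mathrm{fM}(\lambda,\alpha;I)^{\boxplus\eta}=\mathrm{fM}(\lambda,\alpha;\eta)$, and then reads off the semigroup law. Your approach is more elementary and self-contained for (1)---it avoids the $\Phi$/$\mf{B}_\eta$ machinery entirely and gives the explicit cumulant formula as a bonus---while the paper's approach packages everything through the single identity $\mu^{\boxplus\eta}=\mathrm{fM}(\lambda,\alpha;\eta)$, from which all three parts flow uniformly. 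For (2) and (3) your argument coincides with the paper's (your ``equivalently'' clause for (2) and your proof of (3) are exactly what the paper does). One small caution: your first justification of (2), that $\mu^{\boxplus\eta}$ has cumulants $\eta\circ\kappa_n^\mu$, presumes a particular characterization of the operator-valued convolution power from \cite{Ans-Bel-Fev-Nica}; since you also give the route via \eqref{B_eta}, (3), and Proposition~\ref{Prop:Boolean}, the argument stands regardless.
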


\begin{proof}
Let $\mu = \mathrm{fM}(\lambda, \alpha; I)$ be defined via equation \eqref{Free-Meixner}; since we are not assuming that $I + \alpha \in \mc{CP}(\mc{B})$, we may only conclude that $\mu \in \Sigma_{alg}(\mc{B})$. Nevertheless, Corollary~\ref{Cor:Phi} applies, and states that $\mu = \Phi[\gamma_{\lambda, I + \alpha}]$. Now applying identities~\eqref{B_eta} and \eqref{copy}, and using the free convolution property of semicircular distributions,
\[
\Phi[\gamma_{\lambda, I + \alpha}]^{\boxplus \eta}
= \left( \mf{B}_{\eta - I}[\Phi[\gamma_{\lambda, I + \alpha}]] \right)^{\uplus \eta}
= \left( \Phi[\gamma_{\lambda, I + \alpha} \boxplus \gamma_{0, \eta - I}] \right)^{\uplus \eta}
= \left( \Phi[\gamma_{\lambda, \alpha + \eta}] \right)^{\uplus \eta}.
\]
In other words, using also Proposition~\ref{Prop:Boolean},
\[
\mu^{\boxplus \eta} = J
\begin{pmatrix}
0, & \lambda, & \lambda, & \lambda, & \ldots \\
\eta, & \eta + \alpha, & \eta + \alpha, & \eta + \alpha, & \ldots
\end{pmatrix}
\]
i.e. $\mu^{\boxplus \eta} = \mathrm{fM}(\lambda, \alpha; \eta)$. The semigroup property follows. For the final statement, we again observe that
\[
\mf{B}_\eta[\mathrm{fM}(\lambda, \alpha; I)]
= \mf{B}_{\eta}[\Phi[\gamma_{\lambda, I + \alpha}]]
= \Phi[\gamma_{\lambda, I + \alpha + \eta}]
= \mathrm{fM}(\lambda, \eta + \alpha; I) \qedhere
\]
\end{proof}

In the scalar-valued case, the following proposition says that $R(z)$ satisfies a quadratic equation, a well-known result, see Theorem~3(c) in \cite{AnsFree-Meixner}.

\begin{Prop}
If $\mu$ is a free normalized Meixner distribution $\mathrm{fM}(\lambda, \alpha; I)$, then
\[
b^{-1} R_\mu(b) b^{-1} = 1_{\mc{B}} + \lambda R_\mu(b) b^{-1} + \alpha[R_\mu(b) b^{-1}] R_\mu(b) b^{-1}.
\]
\end{Prop}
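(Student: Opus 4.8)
The plan is to reduce the identity to the defining functional equations of $M_\mu$ and of an auxiliary semicircular law, and then substitute. Since $\mu=\mathrm{fM}(\lambda,\alpha;I)$ has Jacobi parameters $(0,\lambda,\lambda,\ldots\,;\,I,I+\alpha,I+\alpha,\ldots)$, coefficient stripping — Proposition~\ref{Prop:Shift}, i.e.\ equation~\eqref{mu-mu2} with $\lambda_1=0$ and $\alpha_1=I$ — gives
\[
M_\mu(b)=\left(1_{\mc{B}}-b\,M_\nu(b)\,b\right)^{-1},
\]
where $\nu=J\left(\begin{smallmatrix}\lambda,&\lambda,&\ldots\\ I+\alpha,&I+\alpha,&\ldots\end{smallmatrix}\right)$ is the $\mc{B}$-valued semicircular distribution with mean $\lambda$ and covariance $I+\alpha$. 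By the semicircular computation above together with additivity of free cumulants (Proposition~\ref{Prop:Mean}, using $R_{\delta_\lambda}(b)=\lambda b$), its free cumulant generating function is $R_\nu(c)=\lambda c+(I+\alpha)[c]\,c$, so the relation~\eqref{Free-cumulant-GF} for $\nu$ reads $M_\nu(b)=1_{\mc{B}}+\lambda\,bM_\nu(b)+(I+\alpha)[bM_\nu(b)]\,bM_\nu(b)$. As usual I would work in the fully matricial setting of Proposition~\ref{Prop:Fully-matricial}, so that the resulting non-commutative power series identity determines $\mu$.

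Next I would eliminate $\nu$. From $M_\mu(b)^{-1}=1_{\mc{B}}-bM_\nu(b)b$ one gets $bM_\nu(b)b=1_{\mc{B}}-M_\mu(b)^{-1}$, hence $M_\nu(b)=b^{-1}\bigl(1_{\mc{B}}-M_\mu(b)^{-1}\bigr)b^{-1}$ and $bM_\nu(b)=\bigl(1_{\mc{B}}-M_\mu(b)^{-1}\bigr)b^{-1}$. Now introduce $w=bM_\mu(b)$, an admissible change of variable since $w=b\bigl(1_{\mc{B}}+O(b^2)\bigr)$, so that $M_\mu(b)=1_{\mc{B}}+R_\mu(w)$ by~\eqref{Free-cumulant-GF}. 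Because $R_\mu(w)=M_\mu(b)-1_{\mc{B}}$ commutes with $M_\mu(b)$, one obtains $1_{\mc{B}}-M_\mu(b)^{-1}=R_\mu(w)M_\mu(b)^{-1}$ and $b^{-1}=M_\mu(b)\,w^{-1}$, whence $\bigl(1_{\mc{B}}-M_\mu(b)^{-1}\bigr)b^{-1}=R_\mu(w)w^{-1}$ and $M_\nu(b)=M_\mu(b)\,w^{-1}R_\mu(w)w^{-1}$. Substituting these into the functional equation for $M_\nu$ and expanding $(I+\alpha)[\,\cdot\,]$, the left side becomes $\bigl(1_{\mc{B}}+R_\mu(w)\bigr)w^{-1}R_\mu(w)w^{-1}=w^{-1}R_\mu(w)w^{-1}+R_\mu(w)w^{-1}R_\mu(w)w^{-1}$, while the right side is $1_{\mc{B}}+\lambda R_\mu(w)w^{-1}+R_\mu(w)w^{-1}R_\mu(w)w^{-1}+\alpha[R_\mu(w)w^{-1}]\,R_\mu(w)w^{-1}$; the common quadratic term cancels, leaving
\[
w^{-1}R_\mu(w)w^{-1}=1_{\mc{B}}+\lambda R_\mu(w)w^{-1}+\alpha[R_\mu(w)w^{-1}]\,R_\mu(w)w^{-1}.
\]
Renaming $w$ as $b$ gives the claim, and applying the same reasoning to the matricial extensions of Proposition~\ref{Prop:Fully-matricial} yields it for all $d$.

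The point requiring care is the non-commutative bookkeeping. One must check every inverse is legitimate: all expressions are honest non-commutative formal power series, since for a free Meixner law $R_\mu(b)=b^2+O(b^3)$, so $R_\mu(b)b^{-1}=b+O(b^2)$ and $b^{-1}R_\mu(b)b^{-1}=1_{\mc{B}}+O(b)$ involve only non-negative powers, and $M_\mu(0)=1_{\mc{B}}$ makes $M_\mu(b)$ invertible. The essential algebraic fact is that $R_\mu(w)=M_\mu(b)-1_{\mc{B}}$ commutes with $M_\mu(b)$; it is precisely this commutation that makes the quadratic terms on the two sides coincide and cancel. No analytic estimate is needed — the only real obstacle is keeping the order of the factors straight through the substitution.
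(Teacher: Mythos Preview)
Your proof is correct and follows essentially the same route as the paper's: both reduce to the coefficient-stripping relation $M_\mu(b)=(1_{\mc{B}}-bM_\nu(b)b)^{-1}$ with $\nu$ semicircular of mean $\lambda$ and covariance $I+\alpha$, express $bM_\nu(b)$ as $(M_\mu(b)-1_{\mc{B}})(bM_\mu(b))^{-1}=R_\mu(w)w^{-1}$, and substitute into the semicircular functional equation. The only cosmetic differences are that the paper cancels the identity part of $(I+\alpha)$ earlier, via the auxiliary relation $M_\mu(b)^{-1}M_\nu(b)=M_\nu(b)-bM_\nu(b)bM_\nu(b)$, whereas you carry $(I+\alpha)$ through and cancel the resulting quadratic term at the end; and the paper obtains the equation for $M_\nu$ directly from Proposition~\ref{Prop:Shift} rather than via $R_\nu(c)=\lambda c+(I+\alpha)[c]c$.
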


\begin{proof}
If
\[
\mu = J
\begin{pmatrix}
0, & \lambda, & \lambda, & \lambda, & \ldots \\
I, & I + \alpha, & I + \alpha, & I + \alpha, & \ldots
\end{pmatrix},
\]
then by Proposition~\ref{Prop:Shift},
\begin{equation*}
M_\mu(b) = \left( 1_{\mc{B}} - b M_\nu(b) b \right)^{-1}
\end{equation*}
and
\[
M_\mu(b)^{-1} M_\nu(b) = M_\nu(b) - b M_\nu(b) b M_\nu(b),
\]
where $\nu$ is a semicircular distribution with mean $\lambda$ and covariance $I + \alpha$. By the same proposition,
\[
M_\nu(b) = 1_{\mc{B}} + \lambda b M_\nu(b) + (I + \alpha)[b M_\nu(b)] b M_\nu(b),
\]
thus
\[
M_\mu(b)^{-1} M_\nu(b) = 1_{\mc{B}} + \lambda b M_\nu(b) + \alpha[b M_\nu(b)] b M_\nu(b).
\]
Now using
\[
b M_\nu(b) = (1_{\mc{B}} - M_\mu(b)^{-1}) b^{-1} = (M_\mu(b) - 1_{\mc{B}}) (b M_\mu(b))^{-1},
\]
we get
\[
\begin{split}
& M_\mu(b)^{-1} b^{-1} (1_{\mc{B}} - M_\mu(b)^{-1}) b^{-1} \\
&\qquad = 1_{\mc{B}} + \lambda (1_{\mc{B}} - M_\mu(b)^{-1}) b^{-1} + \alpha[(1_{\mc{B}} - M_\mu(b)^{-1}) b^{-1}] (1_{\mc{B}} - M_\mu(b)^{-1}) b^{-1},
\end{split}
\]
or equivalently
\begin{multline*}
b M_\mu(b))^{-1} (M_\mu(b) - 1_{\mc{B}}) (b M_\mu(b))^{-1} \\
= 1_{\mc{B}} + \lambda (M_\mu(b) - 1_{\mc{B}}) (b M_\mu(b))^{-1} + \alpha[(M_\mu(b) - 1_{\mc{B}}) (b M_\mu(b))^{-1}] (M_\mu(b) - 1_{\mc{B}}) (b M_\mu(b))^{-1}.
\end{multline*}
Using the definition \eqref{Free-cumulant-GF} of the free cumulant generating function,
\[
b^{-1} R_\mu(b) b^{-1} = 1_{\mc{B}} + \lambda R_\mu(b) b^{-1} + \alpha[R_\mu(b) b^{-1}] R_\mu(b) b^{-1}. \qedhere
\]
\end{proof}

\begin{Remark}
\label{Remark:Bernoulli}
Proposition~\ref{Prop:Meixner} is most interesting in the somewhat subtle case of (centered) Bernoulli distributions, which according to Example~\ref{Ex:Bernoulli} are
\[
\mu = J
\begin{pmatrix}
0, & \lambda, & 0, & 0, & \ldots \\
\alpha, & 0, & 0, & 0, & \ldots
\end{pmatrix}.
\]
Since the Jacobi parameter $\alpha_2 = 0$, the values of $\lambda_i, i \geq 3$ can in fact be defined arbitrarily and still give the same distribution. Exactly one choice will make $\mu$ a free Meixner distribution, namely
\[
\mu = J
\begin{pmatrix}
0, & \lambda, & \lambda, & \lambda, & \ldots \\
\alpha, & 0, & 0, & 0, & \ldots
\end{pmatrix}
\]
so that $\mu = \mathrm{fM}(\lambda, -\alpha; \alpha)$.
\end{Remark}

\begin{Cor}
\label{Cor:Poisson}
Let $\mu_N$ be a Bernoulli distribution with Jacobi parameters
\[
\mu_N = J
\begin{pmatrix}
\frac{1}{N} \lambda_1 + o(\frac{1}{N}), & \frac{1}{N} \lambda_1 + \lambda + o(1), & 0, & 0, & \ldots \\
\frac{1}{N} \alpha + o(\frac{1}{N}), & 0, & 0, & 0, & \ldots
\end{pmatrix}.
\]
Then $\mu_N^{\boxplus N} \rightarrow \nu$, where $\nu$ is a free Poisson distribution with mean $\lambda_1$ and parameters $(\lambda, \alpha)$,
\[
\nu = J
\begin{pmatrix}
\lambda_1, & \lambda_1 + \lambda, & \lambda_1 + \lambda, & \lambda_1 + \lambda, & \ldots \\
\alpha, & \alpha, & \alpha, & \alpha, & \ldots
\end{pmatrix}.
\]
\end{Cor}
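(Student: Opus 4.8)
The plan is to reduce this to a computation with moment generating functions in the fully matricial setting, using the coefficient-stripping formula from Proposition~\ref{Prop:Shift} together with the Bernoulli description in Example~\ref{Ex:Bernoulli} and the free-convolution-power identity built into the $\boxplus$ operation. First I would record, using Example~\ref{Ex:Bernoulli}, the explicit moment generating function of $\mu_N$: since $\mu_N$ is Bernoulli with top Jacobi parameters $(\lambda_1^{(N)}, \mu_2^{(N)})$ and first variance $\alpha_1^{(N)}$, where $\lambda_1^{(N)} = \tfrac{1}{N}\lambda_1 + o(\tfrac1N)$, $\mu_2^{(N)} = \delta_{\frac1N \lambda_1 + \lambda + o(1)}$, and $\alpha_1^{(N)} = \tfrac1N \alpha + o(\tfrac1N)$, equation~\eqref{mu-mu2} gives
\[
M_{\mu_N}(b) = \left(1_{\mc{B}} - \lambda_1^{(N)} b - \alpha_1^{(N)}\!\left[b\,M_{\mu_2^{(N)}}(b)\right] b\right)^{-1},
\]
with $M_{\mu_2^{(N)}}(b) = (1_{\mc{B}} - (\tfrac1N\lambda_1+\lambda) b)^{-1} + o(1) \to (1_{\mc{B}} - \lambda b)^{-1}$. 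Thus $M_{\mu_N}(b) = 1_{\mc{B}} + \tfrac1N\bigl(\lambda_1 b + \alpha[b(1_{\mc{B}}-\lambda b)^{-1}] b\bigr) + o(\tfrac1N)$, so the Boolean cumulant generating function satisfies $B_{\mu_N}(b) = \tfrac1N\bigl(\lambda_1 b + \alpha[b(1_{\mc{B}}-\lambda b)^{-1}] b\bigr) + o(\tfrac1N)$.

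Next I would pass to free convolution powers. By the additivity of the free cumulant generating function under $\boxplus$ (equation~\eqref{Free-cumulant-GF} together with the definition of $\mu^{\boxplus\eta}$ for scalar $\eta = N$ from \cite{Ans-Bel-Fev-Nica}), we have $R_{\mu_N^{\boxplus N}}(b) = N\,R_{\mu_N}(b)$. From the expansion above, $R_{\mu_N}(b) = B_{\mu_N}(b) + o(\tfrac1N) = \tfrac1N\bigl(\lambda_1 b + \alpha[b(1_{\mc{B}}-\lambda b)^{-1}]b\bigr) + o(\tfrac1N)$ (to first order, free and Boolean cumulants agree). Hence
\[
R_{\mu_N^{\boxplus N}}(b) \longrightarrow \lambda_1 b + \alpha\!\left[b(1_{\mc{B}}-\lambda b)^{-1}\right] b.
\]
On the other hand, from the proof of the free Poisson proposition above, the free cumulant generating function of the free Poisson distribution with parameters $(\lambda,\alpha)$ is $\alpha[b(1_{\mc{B}}-\lambda b)^{-1}]b$; adding the mean $\lambda_1$ via Proposition~\ref{Prop:Mean} shifts each $\lambda_i$ by $\lambda_1$ and, at the level of $R$, adds $\lambda_1 b$. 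So the limiting $R$ is exactly that of $\nu$, and since a distribution is determined by its free cumulant generating function (in the fully matricial sense, Proposition~\ref{Prop:Fully-matricial}), we get $\mu_N^{\boxplus N} \to \nu$ with the stated Jacobi parameters.

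The main obstacle is making the convergence rigorous rather than formal. Two points need care: (i) justifying that the $o(\tfrac1N)$ error terms in the Jacobi parameters propagate to $o(\tfrac1N)$ errors in $M_{\mu_N}$ and then, after multiplying by $N$ in $R_{\mu_N^{\boxplus N}}$, vanish in the limit — this requires controlling the relevant quantities uniformly on a fixed ball $\norm{b} \leq \rho$, which follows since the $\mu_N$ are exponentially bounded with a uniform constant (the Corollary after Proposition~\ref{Prop:Shift}), so $R_{\mu_N}$ is analytic on a common domain and the power-series coefficients converge; and (ii) verifying that $N R_{\mu_N}$ and $N B_{\mu_N}$ have the same limit, i.e. that the discrepancy between free and Boolean cumulants is $O(\tfrac1{N^2})$ per coefficient — this is because both agree with the moments to leading order and the corrections are quadratic in the lower cumulants. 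Once the limit of $R_{\mu_N^{\boxplus N}}$ is identified, convergence of moments (hence of the distributions in the appropriate topology) is immediate, and reading off the Jacobi parameters of $\nu$ is just the free Poisson proposition combined with Proposition~\ref{Prop:Mean}.
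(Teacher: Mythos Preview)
Your argument is correct, but it takes a genuinely different route from the paper's. The paper never touches cumulant generating functions here. Instead it uses the structural machinery developed earlier in the section: it shifts off the mean via Proposition~\ref{Prop:Mean} to write $\mu_N = \delta_{\frac{1}{N}\lambda_1 + o(\frac{1}{N})} \boxplus \bar{\mu}_N$ with $\bar{\mu}_N$ centered, recognizes $\bar{\mu}_N$ as a free Meixner distribution $\mathrm{fM}\bigl(\lambda + o(1),\, -\tfrac{1}{N}\alpha + o(\tfrac{1}{N});\, \tfrac{1}{N}\alpha + o(\tfrac{1}{N})\bigr)$ via Remark~\ref{Remark:Bernoulli}, and then invokes the semigroup property of Proposition~\ref{Prop:Meixner} to compute the Jacobi parameters of $\bar{\mu}_N^{\boxplus N}$ \emph{exactly} for each finite $N$. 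The limit of the Jacobi parameters is then read off directly. This approach buys something extra: an explicit Jacobi-parameter description of $\mu_N^{\boxplus N}$ at every stage, with no approximation or error-control needed. Your cumulant approach is the classical Poisson-limit argument (compute $R_{\mu_N}$ to leading order, multiply by $N$, identify the limit); it is more elementary and works independently of the free-Meixner theory, but at the cost of the analytic bookkeeping you flag in points (i) and (ii), and it gives no information about $\mu_N^{\boxplus N}$ before the limit.
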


\begin{proof}
We first use Proposition~\ref{Prop:Mean} and Remark~\ref{Remark:Bernoulli} to write
\[
\mu_N = \delta_{\frac{1}{N} \lambda_1 + o(\frac{1}{N})} \boxplus \bar{\mu}_N,
\]
where
\[
\begin{split}
\bar{\mu}_N
& = J
\begin{pmatrix}
0, & \lambda + o(1), & 0, & 0, & \ldots \\
\frac{1}{N} \alpha + o(\frac{1}{N}), & 0, & 0, & 0, & \ldots.
\end{pmatrix} \\
& = J
\begin{pmatrix}
0, & \lambda + o(1), & \lambda + o(1), & \ldots \\
\frac{1}{N} \alpha + o(\frac{1}{N}), & 0, & 0, & \ldots.
\end{pmatrix}.
\end{split}
\]
Then
\[
\mu_N^{\boxplus N} = \delta_{\lambda_1 + o(1)} \boxplus \bar{\mu}_N^{\boxplus N}
\]
and, applying Proposition~\ref{Prop:Meixner},
\[
\bar{\mu}_N^{\boxplus N} = J
\begin{pmatrix}
0, & \lambda + o(1), & \lambda + o(1), & \ldots \\
\alpha + N o(\frac{1}{N}), & \frac{N-1}{N} \alpha + (N-1) o(\frac{1}{N}), & \frac{N-1}{N} \alpha + (N-1) o(\frac{1}{N}), & \ldots.
\end{pmatrix}.
\]
We conclude that $\mu_N^{\boxplus N} \rightarrow \nu$.
\end{proof}

Note that Theorem~4.4.3 in \cite{SpeHab} proves the usual compound Poisson limit theorem, which implies a particular case of the above with $\lambda_1 = t a$, $\lambda_2 = a$  and $\alpha[b] = t a b a$.

\begin{Example}
If
\[
\mu = J
\begin{pmatrix}
0, & 0, & 0, & 0, & \ldots \\
2 \alpha, & \alpha, & \alpha, & \alpha, & \ldots
\end{pmatrix},
\]
in other words $\mu = \mathrm{fM}(0, - \alpha; 2 \alpha)$, then it is natural to call $\mu$ the $\mc{B}$-valued arcsine distribution. Indeed, recall that in the scalar setting, the arcsine law is the Boolean convolution square of a semicircular distribution, and also free convolution square of a Bernoulli distribution. For $\mu$ as above, the Boolean cumulant generating function from equation~\eqref{Boolean} is
\[
B_\mu(b) = 2 \alpha[b M_\nu(b)] b,
\]
where $\nu$ is the centered semicircular distribution with variance $\alpha$. Since for the semicircular distribution, by \eqref{Semicircular1}, $B_\nu(b) = \alpha[b M_\nu(b)] b$, it follows that
\[
\mu = \nu^{\uplus 2}.
\]
On the other hand, for the centered Bernoulli distribution $\rho$ with covariance $\alpha$, $\rho = \mathrm{fM}(0, -\alpha; \alpha)$. So by Proposition~\ref{Prop:Meixner}
\[
\rho^{\boxplus 2} = \mathrm{fM}(0, - \alpha; 2 \alpha) = \mu.
\]
It follows from Theorem~3.2 in \cite{Belinschi-Popa-Vinnikov-Semicircle} that in the case when $\alpha[b] = a b a$ for some $a$, this arcsine law is the same as in that paper, and in particular appears as the limit law in the monotone central limit theorem.
\end{Example}

\begin{Example}
If $\alpha, \eta - \alpha \in \mc{CP}(\mc{B})$, it is natural to call
\[
\mu = J
\begin{pmatrix}
0, & 0, & 0, & 0, & \ldots \\
\eta, & \eta - \alpha, & \eta - \alpha, & \eta - \alpha, & \ldots
\end{pmatrix},
\]
the $\mc{B}$-valued free binomial distributions, since in the particular case when $\eta = \tau \circ \alpha$,
\[
\mu = \mathrm{fM}(0, - \alpha; \tau \circ \alpha) = \mathrm{fM}(0, - \alpha; \alpha)^{\boxplus \tau}
\]
is a free convolution power of a $\mc{B}$-valued Bernoulli distribution.
\end{Example}

The following proposition computes explicitly the moments of free binomial distributions, arising as free convolution powers of (the distribution of) a special operator $a$. See Section \ref{section5} for a concrete example of such $a$.

\begin{Prop}
\label{Bernoulli}
Let $(\mc{A}, \mc{B}, E)$ be a non-commutative probability space. Let $a \in \mc{A}$ be such that $E[a] = 0$ and $a \mc{B} a \subset \mc{B}$, and denote $\alpha(b) = a b a$. Then
\begin{enumerate}
\item
$a$ has a Bernoulli distribution with parameter $\alpha$.
\item
\label{Bernoulli2}
Taking all $\lambda_i = 0$ and $\alpha_i = \alpha$, for even $n$, $T_\pi(b_0 X b_1 \ldots X b_n) = b_0 a b_1 \ldots a b_n$, and in particular does not depend on $\pi$.
\item
For $t \geq 1$, the odd moments of $\mu_a^{\boxplus t}$ are zero, and the even ones are
\[
\mu_a^{\boxplus t}[b_0 X \ldots X b_n]
= m_n(t) b_0 a b_1 \ldots a b_n.
\]
Here $m_n(2) = \binom{2n}{n}$, and in general for $n > 0$,
\[
m_n(t) = t^{2n} - \frac{t}{2} \sum_{k=1}^n \frac{1}{2k-1} \binom{2k}{k} (t-1)^k t^{2(n-k)}.
\]
\end{enumerate}
\end{Prop}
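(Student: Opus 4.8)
The plan is to compute $\mu_a[b_0 X b_1 X \cdots X b_n] = E[b_0 a b_1 a \cdots a b_n]$ directly. Since $a \mc{B} a \subseteq \mc{B}$, scanning the word from the left lets one repeatedly absorb $a b_{2j-1} a = \alpha(b_{2j-1}) \in \mc{B}$: for even $n$ this leaves the $\mc{B}$-element $b_0 \alpha(b_1) b_2 \alpha(b_3) \cdots \alpha(b_{n-1}) b_n$, which $E$ fixes, while for odd $n$ a single $a$ is left adjacent to $b_n$, so $\mc{B}$-bimodularity of $E$ together with $E[a] = 0$ forces the moment to vanish. Comparing with formula~\eqref{Moments} for the Jacobi parameters $\lambda_i \equiv 0$, $\alpha_1 = \alpha$, $\alpha_i = 0$ $(i \ge 2)$ --- for which $T_\pi$ is nonzero only for the single pair partition $\{(1,2),(3,4),\dots\}$ of an even number of points --- identifies $\mu_a$ with the centered $\mc{B}$-valued Bernoulli distribution with covariance $\alpha$ of Example~\ref{Ex:Bernoulli}; by Remark~\ref{Remark:Bernoulli}, $\mu_a = \mathrm{fM}(0,-\alpha;\alpha)$.

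\textbf{Part (b).} With all $\lambda_i = 0$, $T_\pi$ vanishes unless $\pi$ is a pair partition (so $n$ is even), and for such $\pi$ I would induct on the number of blocks. A non-crossing pairing always has an innermost block, necessarily of the form $\{i,i+1\}$, which contributes the factor $\alpha[b_i] = a b_i a$ to $T_\pi$. Deleting it and merging the coefficients $b_{i-1}, b_i, b_{i+1}$ into $b_{i-1}\alpha[b_i]b_{i+1} = b_{i-1}\, a\, b_i\, a\, b_{i+1}$ rewrites $T_\pi(b_0 X \cdots X b_n)$ as $T_{\pi'}$ of the shorter word, which by the inductive hypothesis equals that word with each $X$ replaced by $a$; re-expanding the merged coefficient then recovers $b_0 a b_1 \cdots a b_n$. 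The base case $n=0$ is trivial.

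\textbf{Part (c).} By part (a), Remark~\ref{Remark:Bernoulli}, and the Example presenting the $\mc{B}$-valued free binomial distributions as free convolution powers of Bernoulli distributions (applied with $\tau = t I$, legitimate for $t \ge 1$ since then $t\alpha - \alpha = (t-1)\alpha \in \mc{CP}(\mc{B})$; alternatively via Proposition~\ref{Prop:Meixner}),
\[
\mu_a^{\boxplus t} = \mathrm{fM}(0,-\alpha;t\alpha) = J\begin{pmatrix} 0, & 0, & 0, & \ldots \\ t\alpha, & (t-1)\alpha, & (t-1)\alpha, & \ldots \end{pmatrix}.
\]
Feeding these parameters into formula~\eqref{Moments}: only pair partitions survive (so moments over an odd number of $X$'s vanish), and by part (b) the substitutions $\alpha \mapsto t\alpha$ on outer blocks and $\alpha \mapsto (t-1)\alpha$ on inner blocks merely pull out the scalars $t$ and $t-1$, whence $T_\pi(b_0 X \cdots X b_{2n}) = t^{o(\pi)}(t-1)^{i(\pi)}\, b_0 a b_1 \cdots a b_{2n}$ with $o(\pi), i(\pi)$ the numbers of outer and inner blocks of $\pi$. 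Summing over $\pi \in \NC_2(2n)$ yields $\mu_a^{\boxplus t}[b_0 X b_1 \cdots X b_{2n}] = m_n(t)\, b_0 a b_1 \cdots a b_{2n}$ with $m_n(t) = \sum_{\pi \in \NC_2(2n)} t^{o(\pi)}(t-1)^{i(\pi)}$. To obtain the closed form I would pass to $M(z) = \sum_{n \ge 0} m_n(t) z^{2n}$, which by the standard dictionary between $\NC_{1,2}$-weighted sums and continued fractions (as in Theorem~\ref{Thm;Favard}, valid formally for arbitrary parameters) equals $\bigl(1 - t z^2 N(z)\bigr)^{-1}$, where the tail $N(z)$ has all parameters $t-1$, hence solves $N = \bigl(1 - (t-1)z^2 N\bigr)^{-1}$ and equals $\dfrac{1 - \sqrt{1-4(t-1)z^2}}{2(t-1)z^2}$. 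Substituting and simplifying,
\[
M(z) = \frac{2(t-1)}{(t-2) + t\sqrt{1-4(t-1)z^2}} = \frac{1}{1-t^2 z^2}\Bigl(1 - \tfrac{t}{2} + \tfrac{t}{2}\sqrt{1-4(t-1)z^2}\,\Bigr),
\]
the second equality being a short check after clearing denominators. Expanding with $\dfrac{1}{1-t^2 z^2} = \sum_j t^{2j}z^{2j}$ and the classical identity $1 - \sqrt{1-4x} = \sum_{k \ge 1}\dfrac{1}{2k-1}\binom{2k}{k}x^k$, the coefficient of $z^{2n}$ is exactly $t^{2n} - \dfrac{t}{2}\sum_{k=1}^n \dfrac{1}{2k-1}\binom{2k}{k}(t-1)^k t^{2(n-k)}$. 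For $t=2$ everything collapses: $N(z) = \dfrac{1-\sqrt{1-4z^2}}{2z^2}$, so $1 - 2z^2 N(z) = \sqrt{1-4z^2}$ and $M(z) = (1-4z^2)^{-1/2} = \sum_n \binom{2n}{n}z^{2n}$, giving $m_n(2) = \binom{2n}{n}$.

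\textbf{Main obstacle.} The only genuine content beyond bookkeeping is the reduction in part (c) --- that the $\mc{B}$-valued moment is literally a scalar multiple of $b_0 a b_1 \cdots a b_{2n}$ --- which rests on $a\mc{B}a \subseteq \mc{B}$ through part (b); granted that, what remains is a routine quadratic/continued-fraction computation. The one point demanding care is the identification $\mu_a^{\boxplus t} = \mathrm{fM}(0,-\alpha;t\alpha)$ for non-integer $t \ge 1$, which cannot be had by merely iterating the additive semigroup property of Proposition~\ref{Prop:Meixner} and must instead invoke the operator-valued free-convolution-power machinery of \cite{Ans-Bel-Fev-Nica}.
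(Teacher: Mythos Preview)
Your proof is correct and follows essentially the same route as the paper: direct moment computation for (a), the observation that nested applications of $\alpha(b)=aba$ collapse to $ab_1a\cdots ab_{n-1}a$ for (b), and for (c) identification of the Jacobi parameters of $\mu_a^{\boxplus t}$ via the free-Meixner machinery, followed by extraction of the scalar weight $\sum_{\pi\in\NC_2(2n)}t^{|\Outer(\pi)|}(t-1)^{|\Inner(\pi)|}$ and a continued-fraction/generating-function computation. The only cosmetic differences are that in (b) you induct on the number of blocks where the paper writes the key identity $\alpha(b_1\alpha(b_2)\cdots b_{n-1})=ab_1\cdots b_{n-1}a=\alpha(b_1)b_2\cdots\alpha(b_{n-1})$ directly, and in (c) you carry out the derivation of $M(z)$ and its power-series expansion in full whereas the paper simply records the closed form $M(z)=\dfrac{t-2-t\sqrt{1-4(t-1)z^2}}{2(t^2z^2-1)}$ and leaves the coefficient extraction to the reader.
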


\begin{proof}
For part (a), we verify that for $n$ even,
\[
E[b_0 a b_1 \ldots a b_n] = b_0 \alpha(b_1) b_2 \alpha(b_3) \ldots \alpha(b_{n-1}) b_n
\]
and for $n$ odd,
\[
E[b_0 a b_1 \ldots a b_n] = b_0 \alpha(b_1) b_2 \alpha(b_3) \ldots \alpha(b_{n-2}) b_{n-1} E[a] b_n = 0.
\]
For part (b), it suffices to note that
\[
\alpha(b_1 \alpha(b_2) b_3 \alpha(b_4) \ldots \alpha(b_{n-2}) b_{n-1})
= a b_1 a b_2 \ldots a b_{n-1} a
= \alpha(b_1) b_2 \alpha(b_3) b_4 \ldots b_{n-2} \alpha(b_{n-1}).
\]
For part (c), we note that
\[
\mu_a^{\boxplus t} = J
\begin{pmatrix}
0, & 0, & 0, & 0, & \ldots \\
t \alpha, & (t-1) \alpha, & (t-1) \alpha, & (t-1) \alpha, & \ldots
\end{pmatrix},
\]
and so for $n$ even
\[
\begin{split}
\mu_a^{\boxplus t}[b_0 X \ldots X b_n]
& = \sum_{\pi \in \NC_{2}(n)} T_\pi(b_0 X b_1 \ldots X b_n) \\
& = \sum_{\pi \in \NC_{2}(n)} t^{\abs{\Outer(\pi)}} (t-1)^{\abs{\Inner(\pi)}} b_0 a b_1 \ldots a b_n
= m_n(t) b_0 a b_1 \ldots a b_n.
\end{split}
\]
Here $\NC_2(n)$ are non-crossing pair partitions, $\Outer(\pi)$, $\Inner(\pi)$ are the outer, respectively, inner blocks of $\pi$, and $m_n(t)$ is the $n$'th moment of the scalar-valued free binomial distribution with parameter $t$. If $t=2$, it is the arcsine distribution, and
\[
m_n(2) = \binom{2n}{n}.
\]
In general, the moment generating function of this scalar-valued distribution is
\[
\sum_{n=0}^\infty m_n(t) z^n = \frac{t - 2 - t \sqrt{1 - 4 (t-1) z^2}}{2 (t^2 z^2 - 1)},
\]
from which the formula for the moments is easily deduced.
\end{proof}

\section{Joint Distributions of $\mathcal{B}$-free Jacobi-Szeg\H{o} distributions.}\label{section3}

Call a \emph{Jacobi-Szeg\H{o} random variable} a $\mc{B}$-valued random variable with a Jacobi-Szeg\H{o} distribution.
In this section, we provide a combinatorial description of the joint moments of $\mc{B}$-free Jacobi-Szeg\H{o} random variables.

\begin{Remark}
Define the set $\mathcal{TCNC}_{1,2}(n)$ to be
set of non-crossing partitions of the set $\{ 1,2, \ldots , n \}$ into blocks of size at most $2$ and where each of the blocks is also assigned one of two colors (red and blue, respectively). For each $i \in \{ 1,2, \ldots , n \}$, its color according to $\pi$ is the color of block of $\pi$ to which is belongs. We define the subset $\mathcal{TCNC}_{2}(n) \subset \mathcal{TCNC}_{1,2}(n)$
as those partitions with pairings and no singletons (this only works for $n$ even).

Setting notation, let $\pi \in \mathcal{NC}_{1,2}(n)$, $P(X) \in \mathcal{B}\langle X \rangle$ a monomial of degree $n$, and let $\mu$ denote a Jacobi-Szeg\H{o} distribution with parameters $\set{(\lambda_i)_{i=0}^\infty, (\alpha_{i})_{i=1}^{\infty}}$. The definition of $T_{\pi}(P(X))$, the moment associated to the partition $\pi$, was given in Proposition~\ref{basejacobi}. Next, let $\pi \in \mathcal{TCNC}_{1,2}(n)$.  Consider two Jacobi-Szeg\H{o} distributions with parameters $\set{(\lambda_i)_{i=1}^\infty, (\alpha_{i})_{i=1}^{\infty}}$ and
$\set{(\tau_i)_{i=1}^\infty, (\beta_{i})_{i=1}^{\infty}}$.  Then $E_{\pi}$ is the moment calculated according to this partition where blue blocks are associated to the first sequence of Jacobi parameters and red the second.  Thus, if
\[
\pi = \{ (1,8), (2,6), (4)\}_{b} \cup \{(3,5), (7) \}_{r}
\]
and
\[
\pi' = \{(1,8), (7) \}_{b} \cup \{(2,6), (3,5), (4)\}_{r}
\]
(the $b$ and the $r$ assign the color), then

\[
E_{\pi} (Xb_{1}Xb_{2}Xb_{3}Xb_{4}Xb_{5}X b_6 X b_7 X) = \alpha_1(b_{1} \alpha_2(b_{2} \beta_1(b_{3} \lambda_1 b_{4}) b_7) \tau_0 b_8)
\]
while
\[
E_{\pi'} (Xb_{1}Xb_{2}Xb_{3}Xb_{4}Xb_{5}X b_6 X b_7 X) = \alpha_1(b_{1} \beta_1(b_{2} \beta_2(b_{3} \tau_2 b_{4})b_{5}) b_6 \lambda_2 b_7).
\]
Crucially, nesting inside a pair of the opposite color implies that the algorithm for applying the automorphisms resets itself (that is, with partition $\pi$, $\beta_{1}$ is applied to $b_{3}$ as opposed to $\beta_{2}$).

More specifically, consider an element $k \in \{ 1, 2, \ldots , n \}$ and $\pi \in \mathcal{TCNC}_{1,2}(n)$.
Assume that $k$ is assigned a blue coloring by the partition $\pi$.
Assume that there exists a red pairing $(c , d)$ which is the red covering of $k$, in the sense that $c < k < d$ and if $(c' , d')$ is another red pairing satisfying the inequality $c' < k < d'$, then $c' < c < d < d'$.  The depth of $k$ is equal to $\ell +1 $ where $(a_{1} , b_{1}) , (a_{2} , b_{2}), \ldots , (a_{\ell} , b_{\ell})$ are a maximal collection of blue pairs in $\pi$ such that $$ c < a_{1} < a_{2} < \cdots < a_{\ell} < k < b_{\ell} < \cdots < b_{2} < b_{1} < d$$
with the convention that $k$ is depth $1$ if no such blue pairings exist.
If there exists no red pair $(c,d)$ in $\pi$ such that $c < k < d$ then the depth of $k$ is simply the number $\ell + 1$ where the blue pairs
$\{ (a_{i} , b_{i}) \}_{i = 1}^{\ell}$ are a maximal family satisfying
$$ a_{1} < a_{2} < \cdots < a_{\ell} < k < b_{\ell} < \cdots < b_{2} < b_{1} $$
with the convention that $k$ is depth $1$ if no such blue pairings exist.
If the number $k$ of depth $\ell + 1$ belongs to a pair $(a,b)$ in $\pi$ then this pair produces the automorphism $\alpha_{\ell + 1}$.
If the number $k$ belongs to a singleton $\{ k \}$ in $\pi$ then this singleton produces the element $\lambda_{\ell}$.
Note that in the special case that $\pi$ is only one color, this algorithm simply reduces to Proposition~\ref{basejacobi}.
\end{Remark}

\begin{Example}
Consider two Jacobi-Szeg\H{o} random variables $X_{1}$ and $X_{2}$
with Jacobi parameters $\set{(0)_{i=1}^\infty, (\alpha_{i})_{i = 1}^{\infty}}$ and  $\set{(0)_{i=1}^\infty, (\beta_{i})_{i = 1}^{\infty}}$ (since all their odd moments are zero, by analogy with the scalar case we could call them \emph{symmetric} Jacobi-Szeg\H{o} random variables).
Consider the expectation
\[
E(X_{1}b_{1}X_{1}b_{2}X_{2}b_{3}X_{2}b_{4}X_{1}b_{5}X_{1}b_{6}X_{2}b_{7}X_{2}).
\]
According to Theorem \ref{maintheorem} below, this moment should be equal to a sum of terms
\begin{equation}\label{demonstration}
E_{\pi}(X_{1}b_{1}X_{1}b_{2}X_{2}b_{3}X_{2}b_{4}X_{1}b_{5}X_{1}b_{6}X_{2}b_{7}X_{2})
\end{equation}
 where $\pi \in \mathcal{TCNC}_{1,2}(4)$ is such that the blue colors are assigned to the $X_{1}$'s and the red to the
$X_{2}$'s.  These correspond to the partitions in Figure~\ref{Three-pairs}, labeled A, B and C from left to right:

\begin{figure}[hhh]
\label{Three-pairs}
\includegraphics[width=\textwidth]{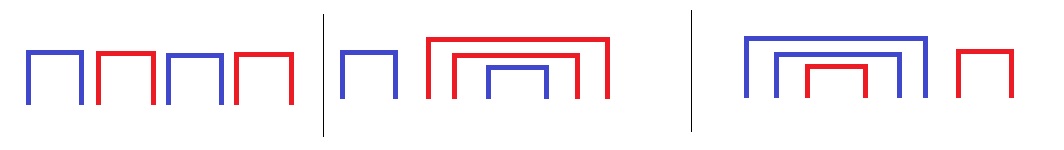}
\caption{Partitions A, B, and C.}
\end{figure}

which produces the  expectation
\[
\begin{split}
\alpha_{1}(b_{1})b_{2}\beta_{1}(b_{3})b_{4}\alpha_{1}(b_{5})b_{6}\beta_{1}(b_{7})
& +  \alpha_{1}(b_{1})b_{2}\beta_{1}(b_{3} \beta_{2}( b_{4}\alpha_{1}(b_{5}) b_{6}    )b_{7}) \\
& +  \alpha_{1}(b_{1} \alpha_{2}(b_{2} \beta_{1}(b_{3})b_{4}) b_{5})b_{6}   \beta_{1}(b_{7}).
\end{split}
\]
\end{Example}

\begin{theorem}\label{maintheorem}
Let $X_{1}$ and $X_{2}$ denote $\mathcal{B}$-free Jacobi-Szeg\H{o} random variables.
Then the joint moment $$E(X_{\epsilon_{1}} b_{1} X_{\epsilon_{2}}\cdots b_{d-1} X_{\epsilon_{d}}) $$ is equal to the sum of the terms
$$ E_{\pi}( X_{\epsilon_{1}} b_{1} X_{\epsilon_{2}}\cdots b_{d-1} X_{\epsilon_{d}} )$$
where $\pi \in \mathcal{TCNC}_{1,2}(d)$, $\epsilon_{i} \in \{1,2\}$ for $i = 1 , 2 , \ldots , d$ and the partition $\pi$ is such that all blue blocks consist of $X_{1}$'s and all red blocks consist of $X_{2}$'s.
\end{theorem}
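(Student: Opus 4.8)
The plan is to realize $X_1$ and $X_2$ concretely on a reduced amalgamated free product of pointed pre-Hilbert $\mc{B}$-bimodules, and then to expand each $X_{\epsilon_j}$ via its decomposition $x=a^\ast+p+a$ from Proposition~\ref{Prop:Jacobi-definition}, tracking Fock-space words exactly as in the proof of Proposition~\ref{basejacobi}. In detail: for $k=1,2$ let $(\mc{H}_k,\xi_k)$ be the pointed bimodule of Proposition~\ref{Prop:Algebraic-distr} attached to the Jacobi-Szeg\H{o} distribution of $X_k$ (so $\xi_k=1_{\mc{B}}$), and let $x_k=a_k^\ast+p_k+a_k$ be the symmetric operator of Proposition~\ref{Prop:Jacobi-definition}. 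Writing $\mc{H}_k=\mc{B}\xi_k\oplus\mc{H}_k^\circ$, form the amalgamated free product $(\mc{H},\xi)$ of $(\mc{H}_1,\xi_1)$ and $(\mc{H}_2,\xi_2)$ and let $X_k$ denote the canonical extension of $x_k$ to $\mc{H}$ (Voiculescu's free product construction with amalgamation, see \cite{Voiculescu1,SpeHab}; one may work in the purely algebraic category of Proposition~\ref{Prop:Algebraic-distr}). Then $X_1,X_2$ are $\mc{B}$-free with the prescribed marginals, so it suffices to evaluate $E(X_{\epsilon_1}b_1\cdots b_{d-1}X_{\epsilon_d})=\ip{\xi}{(X_{\epsilon_1}b_1\cdots b_{d-1}X_{\epsilon_d})\xi}$ inside this model.

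Next I would substitute $X_{\epsilon_j}=a_{\epsilon_j}^\ast+p_{\epsilon_j}+a_{\epsilon_j}$ and expand into $3^d$ terms indexed by tuples $(u_1,\dots,u_d)$ with $u_j\in\set{a^\ast,p,a}$ of color $\epsilon_j$. Applying the word to $\xi$ from the right, each intermediate vector is a sum of reduced tensors $\eta_1\otimes_{\mc{B}}\cdots\otimes_{\mc{B}}\eta_m$ with $\eta_i\in\mc{H}_{k_i}^\circ$, $k_1\neq k_2\neq\cdots$; as in the proof of Proposition~\ref{basejacobi}, an $a^\ast$ of color $c$ raises the $\mc{H}_c$-grading of the top leg by one, \emph{creating a fresh color-$c$ top leg of grading one whenever the current top leg has color $\neq c$}, a $p$ of color $c$ preserves the grading, and an $a$ of color $c$ lowers it by one and annihilates the vector whenever the top leg has color $\neq c$ or grading zero. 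Using orthogonality of distinct-grading components of $\mc{H}$ together with the requirement that the final vector pair nontrivially with $\xi$, one sees that a term is nonzero exactly when $\set{j:u_j\neq p}$, with each $a$ paired to the $a^\ast$ to its right that it pops, forms a non-crossing pairing all of whose pairs are monochromatic, and $\set{j:u_j=p}$ are the singletons. Thus the surviving terms are in bijection with the $\pi\in\mc{TCNC}_{1,2}(d)$ whose blue blocks lie in $\set{j:\epsilon_j=1}$ and red blocks in $\set{j:\epsilon_j=2}$; and every such $\pi$ occurs, since for a fixed $\pi$ the blocks nested between the two legs of any pair have zero net effect on the Fock word, so the matching annihilation always meets a top leg of the correct color and positive grading.

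Finally I would verify that the surviving term attached to $\pi$ equals $E_\pi(X_{\epsilon_1}b_1\cdots b_{d-1}X_{\epsilon_d})$, and \textbf{this index-bookkeeping is the step I expect to be the main obstacle}. When a color-$c$ pair $\set{i,j}$ of $\pi$ is opened by $a_c^\ast$ and later closed by $a_c$, the weight $\ell$ of the $\alpha_\ell$ (from the Jacobi sequence of color $c$) that it produces is the $\mc{H}_c$-grading of the top leg at the moment of creation; since a grading-one color-$c$ leg is spawned precisely when the opposite color intervenes, $\ell$ equals one plus the number of color-$c$ pairs of $\pi$ nesting $\set{i,j}$ that lie inside the innermost opposite-color block containing $\set{i,j}$ — exactly the depth assigned to that block in the definition of $E_\pi$ preceding the theorem. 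The same argument handles singletons and their $\lambda$'s, and the coefficients $b_j$ ride along as the left and right $\mc{B}$-actions on the legs, so the accumulated value is the nested expression defining $E_\pi$; summing over $\pi$ gives the claim. Matching these indices exactly — in particular checking that crossing a block of the opposite color is what re-initializes the count — is delicate and is really the content of the theorem; a secondary point is the careful statement of the free-product realization of $x_k$, which is neither a pure creation nor a pure annihilation operator. An induction on $d$ using coefficient stripping (Proposition~\ref{Prop:Shift}) together with the definition of $\mc{B}$-freeness is a viable alternative to the first two steps, but it only trades the bookkeeping above for an equally delicate analysis of how the recursion interacts with color changes, so the free-product model seems the cleaner route.
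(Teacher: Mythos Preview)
Your approach is correct and genuinely different from the paper's. The paper argues by induction on the number $n$ of maximal single-color intervals in the word: writing each interval $P_j(X_{i_j})$ as $\overline{P_j(X_{i_j})}+E(P_j(X_{i_j}))$ and invoking $E(\overline{P_1}\cdots\overline{P_n})=0$ from freeness gives an alternating sum over nonempty subsets $S\subset\{1,\dots,n\}$ of intervals, each term having strictly fewer intervals; after applying the inductive hypothesis and Proposition~\ref{basejacobi}, an inclusion--exclusion over which intervals in $S$ are covered by a same-colored block collapses the whole expression to the sum over $\mathcal{TCNC}_{1,2}(d)$. Your Fock-model route instead explains the combinatorics structurally: in the reduced free product the tensor legs record exactly the maximal same-color runs in the nested chain of currently open pairs, so the grading of the top leg when a block is closed \emph{is} the relative depth appearing in the definition of $E_\pi$, and the spawning of a fresh leg at a color change is precisely what resets the index. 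This makes the ``depth resets across the opposite color'' rule transparent rather than an outcome of cancellation, at the cost of having to set up the algebraic free product of pointed pre-Hilbert $\mathcal{B}$-bimodules and check that the extensions of $a_k^\ast,p_k,a_k$ act on the top leg as you describe---routine, but as you note not entirely formal since $x_k$ mixes gradings. The index-matching you flag as the main obstacle is genuinely the heart of the matter in both proofs; in your model it is a direct reading of the tensor word, whereas in the paper it is hidden inside the bookkeeping with the sets $V$ and $U(\tau,V)$.
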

\begin{proof}
Setting notation, we  consider a family of monomials $P_{i}(X) \in \mathcal{B}\langle X \rangle$ for $i = 1 , \ldots , n$. We prove our theorem for $$ E(P_{1}(X_{i_{1}}) P_{2}(X_{i_{2}}) \cdots P_{n}(X_{i_{n}})) $$
where  $i_{j} \in \{ 1, 2\}$ and $i_{1} \neq i_{2}, i_{2} \neq i_{3} , \ldots , i_{n-1} \neq i_{n}$.  We refer to  the monomial $ P_{j}(X_{i_{j}})$ as the \textit{$j$th interval}. Let $d_{j} = deg(P_{j})$, so that $d = \sum_{j=1}^{n} d_{j}$.
We say that $\pi \in \mathcal{TCNC}_{1,2}(d)$ \textit{fixes the $j$th interval} if the elements in the interval
$$I_j = [d_{1} + d_{2} + \cdots + d_{j-1} + 1, d_{1} + d_{2} + \cdots + d_{j-1} + d_{j}]$$ are singletons or paired with other elements from the same interval.

Proceeding by induction on the number of intervals, the case $n=1$ is simply the computational algorithm for the moments of Jacobi-Szeg\H{o} random variables in Proposition~\ref{basejacobi} since only one color will be permitted and $\mathcal{TCNC}_{1,2}(d)$ will therefore collapse to $\mathcal{NC}_{1,2}(\deg(P_{1}))$.  Thus, we assume that the theorem holds for any monomial with less than $n$ intervals.

Denoting $\overline{P_{j}(X_{i_{j}})}=  P_{j}(X_{i_{j}}) - E(P_{j}(X_{i_{j}}))$, since
\[
E(\overline{P_{1}(X_{i_{1}})} \overline{P_{2}(X_{i_{2}})} \ldots \overline{P_{n}(X_{i_{n}})}) = 0
\]
by freeness,
\begin{equation}
\label{Expansion}
E(P_{1}(X_{i_{1}}) P_{2}(X_{i_{2}}) \cdots P_{n}(X_{i_{n}}))
= \sum_{k=1}^n (-1)^{k+1} \sum_{\substack{S \subset \set{1, 2, \ldots, n} \\ \abs{S} = k}} E(A_1(S) A_2(S) \ldots A_n(S)),
\end{equation}
where
\[
A_j(S) =
\begin{cases}
E(P_{j}(X_{i_{j}})), & j \in S, \\
P_{j}(X_{i_{j}}), & j \not \in S.
\end{cases}
\]
Since $k \geq 1$, and each $E(P_{j}(X_{i_{j}})) \in \mc{B}$, each of the words $A_1(S) A_2(S) \ldots A_n(S)$ contains less than $n$ intervals of $X_1$'s and $X_2$'s. So we may apply the induction hypothesis to each of these words, and write each
\[
E(A_1(S) A_2(S) \ldots A_n(S)) = \sum_{\pi \in \mc{TCNC}_{1,2}\left( d - \sum_{j \in S} d_j \right)} E_\pi (A_1(S) A_2(S) \ldots A_n(S)).
\]
as the sum of the appropriate $E_\pi$. Let $V \subset S$ consist of those $j$ for which $E(P_{j}(X_{i_{j}}))$ is covered by a block of $\pi$ of the same color as $X_{i_j}$. For each $j \in S \setminus V$, we may use Proposition~\ref{basejacobi} to replace each $E(P_{j}(X_{i_{j}}))$ with the sum
\begin{equation}
\label{Replacement}
E(P_{j}(X_{i_{j}})) = \sum_{\sigma_j \in \NC(d_j)} T_{\sigma_j}(P_{j}(X_{i_{j}})),
\end{equation}
Now combine $\pi$ and $\sigma_j, j \in S \setminus V$ into a single partition
\[
\tau = \pi \cup \bigcup_{j \in S \setminus V }\sigma_j \in \mc{TCNC}_{1,2} \left(d - \sum_{j \in V} d_j \right)
\]
of $\bigcup_{j \not \in V} I_j$. Since for $j \in S \setminus V$, $E(P_{j}(X_{i_{j}}))$ is either not covered by $\pi$ or is covered by a block of $\pi$ of the opposite color from $X_{i_j}$, because of the way $E_\pi$ is defined,
\begin{equation}
\label{Subs}
E_\pi (B_1(S, V) B_2(S, V) \ldots B_n(S, V))
= E_{\pi \cup \bigcup_{j \in S \setminus V} \sigma_j} (A_1(V) A_2(V) \ldots A_n(V)),
\end{equation}
where
\[
B_j(S, V) =
\begin{cases}
E(P_{j}(X_{i_{j}})), & j \in V, \\
T_{\sigma_j}(P_{j}(X_{i_{j}})), & j \in S \setminus V, \\
P_{j}(X_{i_{j}}), & j \not \in S.
\end{cases}
\]
Note that \eqref{Subs} will \emph{not} in general hold if we use the substitution \eqref{Replacement} for any $j \in V$. Plugging \eqref{Subs} into \eqref{Expansion}, we obtain
\[
\begin{split}
& E(P_{1}(X_{i_{1}}) P_{2}(X_{i_{2}}) \cdots P_{n}(X_{i_{n}})) \\
& = \sum_{k=1}^n (-1)^{k+1} \sum_{\substack{S \subset \set{1, 2, \ldots, n} \\ \abs{S} = k}} \sum_{\pi \in \mc{TCNC}_{1,2}\left( d - \sum_{j \in S} d_j \right)} \sum_{\substack{\sigma_j \in \NC(d_j) \\ j \in S \setminus V}} E_{\pi \cup \bigcup_{j \in S \setminus V} \sigma_j} (A_1(V) A_2(V) \ldots A_n(V)).
\end{split}
\]
Each partition $\tau$ can be represented as $\pi \cup \bigcup_{j \in S \setminus V} \sigma_j$ in several ways. To account for this redundancy, fix $V \subset \set{1, \ldots, n}$, and let $\tau \in \mc{TCNC}_{1,2} \left(d - \sum_{j \in V} d_j \right)$. Assume that it fixes the intervals with indices exactly in the set $U(\tau, V) \subset \set{1,\ldots n} \setminus V$. For each of these intervals $I_j$, one can choose whether the restriction $\tau |_I$ comes from $\sigma_j$ (so that $j \in S \setminus V$) or directly from $\pi$ (and so $j \in U(\tau, V) \setminus S$). Thus the expansion above can be reorganized as
\[
= \sum_{V \subset \set{1, \ldots, n}} \sum_{\tau \in \mc{TCNC}_{1,2} \left(d - \sum_{j \in V} d_j \right)} \sum_{\substack{S: \abs{S} \geq 1 \\ V \subset S \subset V \cup U(\tau,V)}} (-1)^{\abs{S} + 1} E_\tau (A_1(V) A_2(V) \ldots A_n(V)).
\]
The sum over $S$ reduces to
\[
\sum_{\substack{S: \abs{S} \geq 1 \\ V \subset S \subset V \cup U(\tau,V)}} (-1)^{\abs{S} + 1}
= (-1)^{\abs{V} + 1} \sum_{k = \min(1 - \abs{V}, 0)}^{\abs{U(\tau, V)}} (-1)^k \binom{\abs{U(\tau, V)}}{k}
=
\begin{cases}
0, & \abs{V} \geq 1, \\
1, & \abs{V} = 0.
\end{cases}
\]
Thus the only terms which contribute to the sum are those for $V = \emptyset$. Noting that $A_j(\emptyset) = P_{j}(X_{i_{j}})$, we conclude that
\[
E(P_{1}(X_{i_{1}}) P_{2}(X_{i_{2}}) \cdots P_{n}(X_{i_{n}}))
= \sum_{\tau \in \mc{TCNC}_{1,2} \left(d \right)} E_\tau(P_1(X_{i_{1}}) P_{2}(X_{i_{2}}) \cdots P_{n}(X_{i_{n}})). \qedhere
\]
\end{proof}

\section{Analytic Computations}\label{section4}

In this section we will consider sums of $\mc{B}$-free random variables with truncated Jacobi-Szeg\H{o} distributions.  Let
\begin{equation}
\label{Jacobi-parameters}
\mu_{k} = J
\begin{pmatrix}
\lambda_1, & \lambda_2, & \lambda_3, & \lambda_4, & \ldots & \lambda_{k-1}, & \lambda_{k}, & 0, & \ldots  \\
\alpha_{1}, &\alpha_{2}, & \alpha_{3}, & \alpha_{4}, & \ldots & \alpha_{k-1}, & 0 & 0 & \ldots
\end{pmatrix}
\end{equation}
\begin{equation*}
\rho_{\ell} = J
\begin{pmatrix}
\tau_1, & \tau_2, & \tau_3, & \tau_4, & \ldots & \tau_{k-1}, & \tau_{k}, & 0, & \ldots  \\
\beta_{1}, &\beta_{2}, & \beta_{3}, & \beta_{4}, & \ldots & \beta_{\ell-1}, & 0 & 0 & \ldots
\end{pmatrix}
\end{equation*}
We say that $\mu_k$ has depth $k$. We will describe the non-zero moments of $\mu_{k} \boxplus \rho_{\ell}$.

We begin by considering subsets $\mathcal{NC}_{1,2}^{k}(n) \subset \mathcal{NC}_{1,2}(n)$ whose partitions only have pairs of depth less than $k$.
Consider  $\pi \in \mathcal{NC}_{1,2}(n)$. If there exist pairs $\{ (a_{i} , b_{i}) \}_{i=1}^{k} \subset \pi$.
such that $$ a_{1} < a_{2} < \cdots < a_{k} < b_{k} <  \cdots < b_{2} < b_{1}$$
then we have that $\pi \in \mathcal{NC}_{1,2}(n) \setminus \mathcal{NC}_{1,2}^{k}(n)$.

We define subsets of $\mathcal{TCNC}_{1,2}^{k,\ell}(n) \subset \mathcal{TCNC}_{1,2}(n)$ as those elements where the blue pairs have depth less than $k$ and the red pairs have depth less than $\ell$, in the following precise sense.
Let $$ \pi = \{ (a_{i} , b_{i} ) \}_{i=1}^{p} \cup \{ (c_{j} ,d_{j} ) \}_{j=1}^{\ell} \cup \{ e_{m} \}_{m=1}^{n - 2p - 2\ell}$$ where the pairs $(a_{i},b_{i})$ are blue and the pairs $(c_{j} ,d_{j} ) $ are red. $\pi \in \mathcal{TCNC}_{1,2}^{k,\ell}(n)$ if for any indices $i_{1} < i_{2} < \cdots < i_{k}$ such that $$ a_{i_{1}} < a_{i_{2}} < \cdots < a_{i_{k}} < b_{i_{k}} <  \cdots < b_{i_{2}} < b_{i_{1}},$$
there exists a pair $(c_{j},d_{j})$ such that
$$ a_{i_{1}} < c_{j}  < a_{i_{k}} < b_{i_{k}} <  d_{j} < b_{i_{1}}.$$
Moreover, if the red and blue are swapped then the same property must be true with $\ell$ replacing $k$.

We define the subsets $ \mathcal{TCNC}_{2}^{k,\ell}(n) \subset \mathcal{TCNC}_{1,2}^{k,\ell}(n)$ as those $\pi \in \mathcal{TCNC}_{1,2}^{k,\ell}(n)$ with no singletons.
These will be the main focus of the forthcoming computations.
For the readers convenience, the figure above
consists of the $20$ elements in $\mathcal{TCNC}_{2}^{2,2}(6)$.

\begin{figure}
\label{TCNC}
\includegraphics[width=\textwidth]{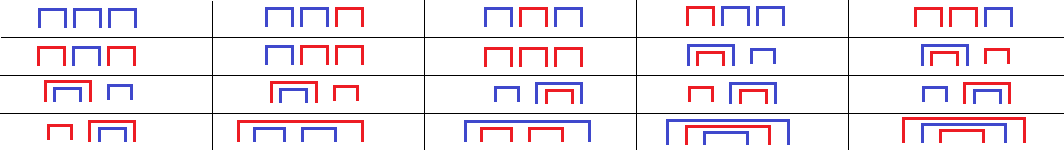}
\caption{$\mathcal{TCNC}^{2,2}_2(6)$}
\end{figure}

We provide a recursive  definition of these sets.
Indeed, $\mathcal{TCNC}_{2}^{k,\ell}(2n)$ is the set of all two-color non-crossing partitions $\pi $ of $\{ 1, 2 , \ldots , 2n\}$ whose coloring respects the pairing with the property that there exists an interval $I \subset \{ 1, 2 , \ldots , 2n\}$ such that
\begin{enumerate}
\item\label{rec1}  The elements of $I$ are blue (resp. red) and it is bordered by red (resp. blue) elements.
\item\label{rec2} $\pi|_{I} \in \mathcal{NC}_{k}(|I|)$ (resp.  $\pi|_{I} \in \mathcal{NC}_{\ell}(|I|))$.
\item\label{rec3} $ \pi|_{\{ 1, 2 , \ldots , 2n\} \setminus I } \in \mathcal{TCNC}_2^{k,\ell}(n - |I|/2)$ (with the obvious shift of the underlying set).
\end{enumerate}

\begin{Lemma}\label{basecase}
Let $X$ be a random variable with distribution $\mu_{k}$ from equation~\eqref{Jacobi-parameters}.
Then $$ E(Xb_{1}X \cdots b_{n-1} X) = \sum_{\pi \in \mathcal{NC}_{1,2}^{k}(n)} T_{\pi}(Xb_{1}X \cdots b_{n-1} X).$$
\end{Lemma}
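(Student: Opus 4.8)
The plan is to obtain the lemma directly from the general moment formula of Proposition~\ref{basejacobi}, by observing that the truncation $\alpha_j = 0$ for $j \geq k$ forces most of the terms $T_\pi$ to vanish. Applying Proposition~\ref{basejacobi} to $\mu_k$ with $b_0 = b_n = 1_{\mc{B}}$ gives
\[
E(X b_1 X \cdots b_{n-1} X) = \mu_k[X b_1 X \cdots b_{n-1} X] = \sum_{\pi \in \mathcal{NC}_{1,2}(n)} T_\pi(X b_1 X \cdots b_{n-1} X),
\]
where the maps $\alpha_j$ and elements $\lambda_j$ entering $T_\pi$ are now those of $\mu_k$. It therefore suffices to show that $T_\pi(X b_1 X \cdots b_{n-1} X) = 0$ for every $\pi \in \mathcal{NC}_{1,2}(n) \setminus \mathcal{NC}_{1,2}^{k}(n)$.

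Fix such a $\pi$. By the defining property of $\mathcal{NC}_{1,2}^{k}(n)$, it contains nested pairs $(a_1, b_1), \ldots, (a_k, b_k)$ with $a_1 < \cdots < a_k < b_k < \cdots < b_1$. The innermost pair $V = (a_k, b_k)$ then satisfies $(a_i, b_i) \succeq V$ for $i = 1, \ldots, k$, so its depth in $\pi$ is at least $k$. (Conversely, any pair of depth $\geq k$ gives rise to such a chain, since the blocks lying above a fixed block of a non-crossing partition form a chain, and a singleton is never strictly above another block, so all of them are pairs.) In the recipe for $T_\pi$ from Proposition~\ref{basejacobi}, the pair $V$ is replaced by an application of $\alpha_{d(V,\pi)}$ to the sub-expression built from the symbols lying strictly between positions $a_k$ and $b_k$; since $d(V,\pi) \geq k$, the map $\alpha_{d(V,\pi)}$ is zero and this sub-expression equals $0 \in \mc{B}$. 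The remainder of $T_\pi$ is assembled from this data by multiplications in $\mc{B}$ and by further applications of the $\mf{C}$-linear maps $\alpha_j$ (with the elements $\lambda_j$ inserted for singletons), nested according to $\pi$. A product in $\mc{B}$ one of whose factors vanishes is zero, and $\alpha_j[0] = 0$ by linearity, so the zero propagates through all the remaining operations and $T_\pi(X b_1 X \cdots b_{n-1} X) = 0$.

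Consequently only the partitions $\pi \in \mathcal{NC}_{1,2}^{k}(n)$ contribute to the displayed sum, which is the assertion of the lemma. The only point requiring a little care is the vanishing argument above — that a zero factor occurring at an arbitrary depth of nesting annihilates all of $T_\pi$ — but this is routine once the structure of $T_\pi$ prescribed by Proposition~\ref{basejacobi} is written out explicitly. Note that no separate discussion of singleton depths is needed: a singleton of depth $\geq k+1$ automatically lies below $k$ nested pairs, so the partition containing it is already excluded from $\mathcal{NC}_{1,2}^{k}(n)$ by the pair condition, and one never has to invoke $\lambda_j = 0$ for $j > k$.
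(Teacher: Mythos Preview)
Your proof is correct and follows essentially the same approach as the paper's own proof: both start from the full expansion of Proposition~\ref{basejacobi} and observe that any $\pi \in \mathcal{NC}_{1,2}(n) \setminus \mathcal{NC}_{1,2}^{k}(n)$ contains a pair of depth at least $k$, so that the corresponding $\alpha_j$ with $j \geq k$ vanishes and kills $T_\pi$. Your version is simply more explicit about why the zero propagates through the nested structure and about the role of singletons, but the underlying argument is identical.
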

\begin{proof}
We have that
$$ E(Xb_{1}X \cdots b_{n-1} X) = \sum_{\pi \in \mathcal{NC}_{1,2}(n)} T_{\pi}(Xb_{1}X \cdots b_{n-1} X) .$$
If $\pi \in \mathcal{NC}_{1,2}(n) \setminus \mathcal{NC}_{1,2}^{k}(n)$ then the partition has a pairing of depth of at least $k$.  This implies that
the $k$th completely positive map will be applied.  However, this is the $0$ automorphism so this term vanishes.  Our result follows.
\end{proof}

We have the following corollary to Theorem \ref{maintheorem}.

\begin{Corollary}\label{count0}
Assume that the random variables $X_{1}$ and $X_{2}$ in the statement of Theorem \ref{maintheorem} have distributions of depth $k$ and $\ell$ respectively.
Then the selection of $\pi$ may be restricted to the subset $\mathcal{TCNC}_{1,2}^{k,\ell}(n) \subset \mathcal{TCNC}_{1,2}(n) $.
\end{Corollary}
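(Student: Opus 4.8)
The plan is to derive this as a direct refinement of Theorem~\ref{maintheorem}. By that theorem, the joint moment $E(X_{\epsilon_1} b_1 X_{\epsilon_2} \cdots b_{n-1} X_{\epsilon_n})$ equals $\sum_{\pi \in \mathcal{TCNC}_{1,2}(n)} E_\pi(X_{\epsilon_1} b_1 \cdots X_{\epsilon_n})$, where in $E_\pi$ the blue blocks carry the Jacobi parameters of $\mu_k$ and the red blocks those of $\rho_\ell$. Thus it suffices to show that $E_\pi(X_{\epsilon_1} b_1 \cdots X_{\epsilon_n}) = 0$ for every $\pi \in \mathcal{TCNC}_{1,2}(n) \setminus \mathcal{TCNC}_{1,2}^{k,\ell}(n)$; this is the two-color counterpart of Lemma~\ref{basecase}.

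So fix such a $\pi$. After possibly interchanging the two colors, the blue clause in the definition of $\mathcal{TCNC}_{1,2}^{k,\ell}(n)$ fails: there are blue pairs $(a_{i_1},b_{i_1}) \supsetneq (a_{i_2},b_{i_2}) \supsetneq \cdots \supsetneq (a_{i_k},b_{i_k})$ for which no red pair $(c,d)$ satisfies $a_{i_1} < c < a_{i_k} < b_{i_k} < d < b_{i_1}$. The first step is a combinatorial claim: the innermost of these pairs, $(a_{i_k},b_{i_k})$, then has two-color depth at least $k$ in the sense of the Remark defining $E_\pi$. If no red pair covers $a_{i_k}$ this is immediate, since $(a_{i_1},b_{i_1}),\dots,(a_{i_{k-1}},b_{i_{k-1}})$ already form a chain of $k-1$ blue pairs strictly containing it. Otherwise let $(c,d)$ be the innermost red pair with $c < a_{i_k} < d$. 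If $c > a_{i_1}$, then $a_{i_1} < c < a_{i_k}$, and since $(c,d)$ crosses neither $(a_{i_1},b_{i_1})$ nor $(a_{i_k},b_{i_k})$ we are forced to have $b_{i_k} < d < b_{i_1}$, which is exactly the red pair assumed not to exist. Hence $c < a_{i_1}$, so all of $(a_{i_1},b_{i_1}),\dots,(a_{i_{k-1}},b_{i_{k-1}})$ lie inside $(c,d)$, and they again form a chain of $k-1$ blue pairs strictly containing $(a_{i_k},b_{i_k})$ that is contained in its innermost red covering. In either case the two-color depth of $(a_{i_k},b_{i_k})$ is at least $k$.

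To finish, I would read off from the $E_\pi$-algorithm of the Remark that a blue pair of two-color depth $m$ contributes the completely positive map $\alpha_m$, applied to the sub-word nested inside it. Since $X_1$ has distribution $\mu_k$, whose Jacobi parameters satisfy $\alpha_k = \alpha_{k+1} = \cdots = 0$, and here $m \geq k$, this contribution is the zero map. Because $E_\pi$ is assembled by plugging products of elements of $\mathcal{B}$ into nested $\mathbb{C}$-linear, $\mathcal{B}$-bimodular maps $\alpha_i,\beta_i$ (together with insertions of the scalars $\lambda_i,\tau_i$), a factor equal to $0 \in \mathcal{B}$ produced at any stage propagates outward and forces $E_\pi(X_{\epsilon_1} b_1 \cdots X_{\epsilon_n}) = 0$. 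The case in which the red clause fails is identical, using $\beta_\ell = \beta_{\ell+1} = \cdots = 0$ for $\rho_\ell$. This shows that only $\pi \in \mathcal{TCNC}_{1,2}^{k,\ell}(n)$ contribute to the sum in Theorem~\ref{maintheorem}, which is the assertion.

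The step I expect to be the main obstacle is the combinatorial claim of the second paragraph: converting the failure of the ``separating red pair'' condition into the existence of a blue pair that genuinely attains two-color depth $k$. This hinges on using the non-crossing property to pin down the innermost red covering of $a_{i_k}$, and is exactly the point where the two-color depth — with its ``reset at opposite-colored pairs'' feature — must be handled with care. Everything after that, namely unfolding $E_\pi$ and observing that a zero map annihilates the term, is routine and parallels Lemma~\ref{basecase}.
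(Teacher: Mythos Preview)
Your argument is correct and is precisely the elaboration the paper leaves implicit: the paper offers no proof beyond calling the statement a corollary of Theorem~\ref{maintheorem}, relying on the reader to carry over the reasoning of Lemma~\ref{basecase} to the two-color setting. You have done exactly that, and your handling of the one nontrivial point---showing that the failure of the separating-red-pair condition forces the innermost blue pair $(a_{i_k},b_{i_k})$ to have two-color depth at least $k$---is clean: the case split on whether the innermost red cover $(c,d)$ of $a_{i_k}$ satisfies $c>a_{i_1}$ (impossible by non-crossing, since it would produce the forbidden red pair) or $c<a_{i_1}$ (so all $k-1$ outer blue pairs lie inside $(c,d)$) is exactly right. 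The final step, that a zero map $\alpha_m$ with $m\ge k$ annihilates $E_\pi$, is routine as you say.
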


We now produce a specific example of a convolution of these distributions with $\mathcal{B} = \mathbb{C}$.
This will also provide a convenient method for counting the size of the sets $\mathcal{TCNC}_{2}^{k,k}(n)$ through free probabilistic methods.

Consider a non-commutative probability space $(M_{k}(\mathbb{C}), \phi_{k})$ where $\phi_{k} (X) = e_{1,1}Xe_{1,1}$.
Consider the self adjoint random variable $$X_{k} = \sum_{i=1}^{k-1} e_{i , i+1 } + e_{i+1 , i}.$$
Since $X$ is a finite tridiagonal matrix (with zero diagonal), the next proposition follows immediately from Theorem~\ref{Thm;Favard}.

\begin{Proposition}\label{count1}
We have that $\phi_{k}(X_{k}^{2n}) = |\mathcal{NC}_{k}(2n)|$.
\end{Proposition}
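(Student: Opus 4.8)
The plan is to recognize $X_k$ as exactly the finite tridiagonal matrix appearing in part~(b) of Theorem~\ref{Thm;Favard}, for a particular choice of scalar Jacobi parameters, and then read off the moments from the equivalence with part~(c).

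First I would observe that $X_k = \sum_{i=1}^{k-1}\left(e_{i,i+1} + e_{i+1,i}\right)$ is the $k\times k$ tridiagonal matrix all of whose diagonal entries vanish and all of whose super- and sub-diagonal entries equal $1$. In the notation of Theorem~\ref{Thm;Favard}, this is precisely the tridiagonal matrix attached to the finite Jacobi data $\lambda_1 = \lambda_2 = \cdots = 0$ and $\alpha_1 = \alpha_2 = \cdots = 1$, truncated at depth $k$ (so that $\alpha_k = 0$); equivalently, $X_k$ is a realization of the depth-$k$ truncated Jacobi--Szeg\H{o} distribution $\mu_k$ from Section~\ref{section4} over $\mc{B} = \mf{C}$ with $\lambda_i \equiv 0$, $\alpha_i \equiv 1$. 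Moreover $\phi_k(\,\cdot\,) = e_{1,1}(\,\cdot\,)e_{1,1}$ is the vector state corresponding to the top-left matrix entry.

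Next I would invoke the equivalence (b)$\Leftrightarrow$(c) of Theorem~\ref{Thm;Favard}, in the finite-sequence form recorded in its last sentence. This yields
\[
\phi_k\!\left(X_k^{\,n}\right)
= \sum_{\pi}\ \prod_{\substack{V\in\pi\\ |V|=1}} \lambda_{d(V,\pi)}\ \cdot\ \prod_{\substack{V\in\pi\\ |V|=2}} \alpha_{d(V,\pi)},
\]
the sum running over the depth-restricted family of non-crossing partitions of $\{1,\dots,n\}$ with blocks of size $\le 2$ corresponding to these truncated parameters. Since every $\lambda_i$ is $0$, any $\pi$ containing a singleton contributes $0$; hence all odd moments vanish, and for an even power $X_k^{2n}$ only the non-crossing pair partitions of the permitted depth survive. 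For each of them every factor equals $\alpha_{d(V,\pi)} = 1$, so that term contributes $1$, and summing over them all gives $\phi_k(X_k^{2n}) = \abs{\mathcal{NC}_k(2n)}$.

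The only point needing a little care --- and it is pure bookkeeping, which is why the statement ``follows immediately'' --- is to line up the indexing conventions so that the depth constraint emerging from Theorem~\ref{Thm;Favard} (namely that the first vanishing coefficient is $\alpha_k$) coincides exactly with the one built into the symbol $\mathcal{NC}_k$. A fully self-contained alternative that bypasses even this is to apply Lemma~\ref{basecase} to $\mu_k$: it expresses $E(X_k b_1 X_k \cdots b_{n-1} X_k) = \sum_{\pi\in\mathcal{NC}_{1,2}^k(n)} T_\pi(X_k b_1 X_k \cdots b_{n-1}X_k)$, which with $\lambda_i \equiv 0$, $\alpha_i \equiv 1$ collapses to the number of pair partitions in $\mathcal{NC}_{1,2}^k(2n)$, i.e.\ $\abs{\mathcal{NC}_k(2n)}$.
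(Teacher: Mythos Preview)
Your proposal is correct and follows exactly the paper's approach: the paper's proof is the single sentence that $X_k$ is a finite tridiagonal matrix with zero diagonal, so the result is immediate from Theorem~\ref{Thm;Favard}. You have simply spelled out the same argument in more detail, including the observation that $\lambda_i\equiv 0$ kills the singletons and $\alpha_i\equiv 1$ makes each surviving term equal to $1$; the alternative via Lemma~\ref{basecase} is also fine and amounts to the same thing.
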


\begin{Remark}
We refer to the probability measure arising  as the distribution of $X_{k}$ with respect to $\phi_{k}$ as $\nu_{k}$. In fact, we can compute $\nu_k$ (and so $|\mathcal{NC}_{k}(2n)|$) explicitly. Namely, from the continued fraction expansion it follows that
\[
G_{\nu_k}(z) = \frac{U_{k-1}(z)}{U_k(z)},
\]
where $U_k(2 \cos \theta) = \frac{\sin(k+1) \theta}{\sin \theta}$ are the Chebyshev polynomials of the second kind (this is a basic fact from the Gaussian quadrature approximation, see \cite{Chihara-book}). The measures can be recovered from this via Stieltjes inversion, while the moments are the coefficient in the asymptotic expansion around infinity. In fact, a short computation shows that
\[
\nu_k = \sum_{j=1}^k a_j \delta_{x_j},
\]
where $x_j = 2 \cos \frac{j}{k+1} \pi$ and $a_j = \frac{1}{k+1} \left( 1 - \cos \frac{2j}{k+1} \pi \right)$.
\end{Remark}

\begin{Corollary}\label{count2}
We have that $|\mathcal{TCNC}_{k,\ell}(2n)| = \nu_{k} \boxplus \nu_{\ell}(t^{2n})$ where $\nu_{k}$ and $\nu_{\ell}$ are the probability measures arising in Proposition \ref{count1}.
\end{Corollary}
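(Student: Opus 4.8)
The plan is to read off the identity from Theorem~\ref{maintheorem} --- in the form sharpened by Corollary~\ref{count0} --- applied to the concrete scalar variables of this section. First I would set $\mc{B} = \mf{C}$ and take $\mc{B}$-free (i.e.\ freely independent) self-adjoint random variables $X_1, X_2$ with distributions $\nu_k$ and $\nu_\ell$ respectively, say in the free product of $(M_k(\mf{C}),\phi_k)$ and $(M_\ell(\mf{C}),\phi_\ell)$. Since $X_1$ (a copy of $X_k$) is a finite tridiagonal matrix with zero diagonal and $1$'s off the diagonal, Theorem~\ref{Thm;Favard} identifies $\nu_k$ with the depth-$k$ truncated Jacobi-Szeg\H{o} distribution $\mu_k$ all of whose $\lambda_i$ vanish and all of whose $\alpha_i$ with $1 \le i \le k-1$ equal the identity map of $\mf{C}$; likewise $\nu_\ell$ is the depth-$\ell$ Jacobi-Szeg\H{o} distribution with all $\tau_i = 0$ and all $\beta_i = \mathrm{id}$.

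Next I would expand an even moment of the free convolution. Taking every $b_i$ equal to $1$,
\[
(\nu_k \boxplus \nu_\ell)(t^{2n}) = \phi\bigl((X_1 + X_2)^{2n}\bigr) = \sum_{\epsilon \in \{1,2\}^{2n}} \phi\bigl(X_{\epsilon_1} X_{\epsilon_2} \cdots X_{\epsilon_{2n}}\bigr).
\]
By Theorem~\ref{maintheorem} and Corollary~\ref{count0}, each summand equals $\sum_\pi E_\pi(X_{\epsilon_1}\cdots X_{\epsilon_{2n}})$, the sum over those $\pi \in \mathcal{TCNC}_{1,2}^{k,\ell}(2n)$ whose blue blocks occupy exactly the positions $i$ with $\epsilon_i = 1$ and whose red blocks occupy the positions with $\epsilon_i = 2$. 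The bookkeeping point is that this compatibility is really a bijection: each coloured partition $\pi$ is compatible with exactly one string $\epsilon$, the one obtained by reading off the colours of $\pi$. Hence summing over all $\epsilon$ collapses the double sum to
\[
(\nu_k \boxplus \nu_\ell)(t^{2n}) = \sum_{\pi \in \mathcal{TCNC}_{1,2}^{k,\ell}(2n)} E_\pi ,
\]
each $E_\pi$ evaluated on the word dictated by the colours of $\pi$.

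It then remains to compute these $E_\pi$. Every singleton of $\pi$ contributes a factor of some $\lambda_i = 0$ (if the singleton is blue) or some $\tau_i = 0$ (if red), so all partitions containing a singleton drop out, leaving precisely the pair partitions, i.e.\ $\pi \in \mathcal{TCNC}_2^{k,\ell}(2n)$. For such a $\pi$ every block produces an application of some $\alpha_i$ or $\beta_i$, each of which is the identity map, and every $b_i = 1$, so $E_\pi = 1$. This yields $(\nu_k \boxplus \nu_\ell)(t^{2n}) = |\mathcal{TCNC}_2^{k,\ell}(2n)|$, which is the assertion. (By the same singleton argument the odd moments of $\nu_k \boxplus \nu_\ell$ vanish, consistent with its symmetry.)

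I do not expect a serious obstacle. The two points meriting a careful word are the $\epsilon \leftrightarrow \pi$ accounting just described, and the matching of the depth conventions --- namely that a blue pair of depth $\ge k$ would invoke $\alpha_k = 0$ and a red pair of depth $\ge \ell$ the map $\beta_\ell = 0$, which is exactly the mechanism (already used in Lemma~\ref{basecase} and Corollary~\ref{count0}) that cuts $\mathcal{TCNC}_{1,2}$ down to $\mathcal{TCNC}_{1,2}^{k,\ell}$. Everything else is formal.
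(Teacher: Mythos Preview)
Your proposal is correct and follows essentially the same approach as the paper's proof, which is a one-line appeal to Corollary~\ref{count0} and Proposition~\ref{count1}; you have simply written out in full the expansion over $\epsilon$, the $\epsilon\leftrightarrow\pi$ bijection, and the evaluation $E_\pi=1$ that these citations implicitly encode. Your extra care with the singleton-vanishing step and the depth bookkeeping is warranted and accurate.
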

\begin{proof}
This is simply a combination of Corollary \ref{count0} and Proposition \ref{count1}.
\end{proof}

We establish the convention that $\nu_{1} = \delta_{0}$, the Dirac mass at $0$.

\begin{Lemma}\label{recurrsiveform}
For $n > 1$, we have that \begin{equation}
G_{\nu_{n}}(z) = \frac{1}{z -G_{\nu_{n - 1}}(z) }.
\end{equation}
\end{Lemma}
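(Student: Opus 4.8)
The plan is to deduce this from coefficient stripping, i.e. from Proposition~\ref{Prop:Shift} specialized to $\mc{B} = \mf{C}$. First I would record that $\nu_n$ is the Jacobi-Szeg\H{o} distribution with parameters $(\lambda_i = 0)_{i=1}^{n}$ and $(\alpha_i = 1)_{i=1}^{n-1}$ (and all later entries zero), since $X_n$ is the symmetric tridiagonal matrix with zero diagonal and all off-diagonal entries equal to $1$, so by Theorem~\ref{Thm;Favard}(b) it has exactly these parameters. Deleting the first column of this array produces the parameters $(\lambda_i = 0)_{i=1}^{n-1}$, $(\alpha_i = 1)_{i=1}^{n-2}$, which are precisely those of $\nu_{n-1}$. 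Hence, in the notation of Proposition~\ref{Prop:Shift}, the once-shifted distribution $(\nu_n)_2$ equals $\nu_{n-1}$; and for $n = 2$ this reads $(\nu_2)_2 = \nu_1 = \delta_0$, which is exactly why the convention $\nu_1 = \delta_0$ is the correct one.

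Next I would apply equation~\eqref{mu-mu2} with $\mu = \nu_n$, $\lambda_1 = 0$, and $\alpha_1$ the identity map of $\mf{C}$. Since all quantities are scalars and commute, this yields
\[
M_{\nu_n}(b) = \left( 1 - b^2 M_{\nu_{n-1}}(b) \right)^{-1}.
\]
Then I would pass to Cauchy transforms via $G_\mu(b) = b^{-1} M_\mu(b^{-1})$: substituting $b \mapsto z^{-1}$ and multiplying by $z^{-1}$,
\[
G_{\nu_n}(z) = z^{-1} M_{\nu_n}(z^{-1}) = z^{-1}\left( 1 - z^{-2} M_{\nu_{n-1}}(z^{-1}) \right)^{-1} = \left( z - z^{-1} M_{\nu_{n-1}}(z^{-1}) \right)^{-1} = \frac{1}{z - G_{\nu_{n-1}}(z)},
\]
which is the assertion.

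There is no genuinely hard step here; the only things to watch are the bookkeeping of the shift (checking it lands on $\nu_{n-1}$ and not on an off-by-one variant) and the base case $n = 2$, where one must invoke the stated convention $\nu_1 = \delta_0$, equivalently $G_{\nu_1}(z) = 1/z$. As an alternative argument, one can proceed purely algebraically from the displayed formula $G_{\nu_k}(z) = U_{k-1}(z)/U_k(z)$ together with the three-term recurrence $U_k(z) = z\,U_{k-1}(z) - U_{k-2}(z)$ for the Chebyshev polynomials of the second kind (with $U_{-1} = 0$, $U_0 = 1$): then
\[
\frac{1}{z - G_{\nu_{n-1}}(z)} = \frac{U_{n-1}(z)}{z\,U_{n-1}(z) - U_{n-2}(z)} = \frac{U_{n-1}(z)}{U_n(z)} = G_{\nu_n}(z),
\]
and the $k=1$ instance of the Chebyshev formula reproduces $G_{\delta_0}(z) = U_0(z)/U_1(z) = 1/z$, consistent with the convention.
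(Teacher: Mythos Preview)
Your proof is correct and follows essentially the same route as the paper: both deduce the recursion from the continued fraction expansion (coefficient stripping) combined with the relation $G_\mu(z) = z^{-1} M_\mu(1/z)$. Your write-up is simply more detailed, and the alternative Chebyshev argument you include is a pleasant bonus not in the paper.
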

\begin{proof}
This follows immediately from the continued fraction expansion in Theorem~\ref{Thm;Favard} and the relation
\[
G_\mu(z) = \frac{1}{z} M_\mu(1/z). \qedhere
\]
\end{proof}

Let $\mu_{n,n} = \nu_{n} \boxplus \nu_{n}$.  The following Corollary will prove useful in computing its distribution.
\begin{Corollary}\label{subordination}
For all $z \in \mathbb{C}^{+}$, we have
$$F_{\mu_{n,n}} (z + G_{\nu_{n-1}}(z)) = z - G_{\nu_{n-1}}(z). $$
\end{Corollary}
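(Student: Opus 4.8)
The plan is to derive Corollary~\ref{subordination} from the subordination description of scalar free convolution (recall that here $\mc{B} = \mf{C}$). By the subordination theorem of Voiculescu, Biane and Belinschi--Bercovici applied to $\mu_{n,n} = \nu_n \boxplus \nu_n$, there is an analytic function $\omega : \mf{C}^+ \to \mf{C}^+$ with $F_{\mu_{n,n}}(z) = F_{\nu_n}(\omega(z))$ for $z \in \mf{C}^+$; moreover, since $\mu_{n,n}$ is the free convolution of $\nu_n$ \emph{with itself}, uniqueness of subordination functions forces the two subordination functions of $\nu_n \boxplus \nu_n$ to coincide, so that the general relation $\omega_1(z) + \omega_2(z) = z + F_{\mu_{n,n}}(z)$ becomes
\[
2\,\omega(z) - z = F_{\mu_{n,n}}(z), \qquad F_{\nu_n}(\omega(z)) = F_{\mu_{n,n}}(z).
\]

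Next I would feed in Lemma~\ref{recurrsiveform}, which, after taking reciprocals, says $F_{\nu_n}(\zeta) = \zeta - G_{\nu_{n-1}}(\zeta)$. Combining the two displayed identities gives $2\,\omega(z) - z = F_{\nu_n}(\omega(z)) = \omega(z) - G_{\nu_{n-1}}(\omega(z))$, hence
\[
z = \omega(z) + G_{\nu_{n-1}}(\omega(z)),
\]
i.e. $\omega$ is a local inverse of the analytic map $\psi(\zeta) := \zeta + G_{\nu_{n-1}}(\zeta)$. Evaluating $F_{\mu_{n,n}}$ at the point $\psi(z) = z + G_{\nu_{n-1}}(z)$, for which $\omega(\psi(z)) = z$, we obtain
\[
F_{\mu_{n,n}}\bigl(z + G_{\nu_{n-1}}(z)\bigr) = F_{\nu_n}\bigl(\omega(\psi(z))\bigr) = F_{\nu_n}(z) = z - G_{\nu_{n-1}}(z),
\]
which is exactly the claimed identity.

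The point requiring care, and the one I expect to be the main obstacle, is the handling of domains. The subordination relations hold on $\mf{C}^+$, but $\omega$ and $\psi$ are genuine mutually inverse biholomorphisms only between suitable truncated cones (Stolz angles) at infinity; indeed $\psi(z) = z + G_{\nu_{n-1}}(z)$ need not lie in $\mf{C}^+$ -- already for $n=2$, where $\nu_1 = \delta_0$ and $\psi(z) = z + 1/z$, a neighborhood of $0$ is mapped out of the upper half-plane. So I would first establish the identity on such a truncated cone, where $\omega \circ \psi = \mathrm{id}$ is legitimate, and then observe that $\nu_n$ is finitely supported (it is the distribution of a finite matrix; see Proposition~\ref{count1}), so $\mu_{n,n} = \nu_n \boxplus \nu_n$ has an algebraic Cauchy transform and hence $F_{\mu_{n,n}}$ is algebraic. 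Consequently both sides of the desired equality are restrictions of algebraic functions, and the identity propagates by analytic continuation to all $z \in \mf{C}^+$ at which the relevant expressions are defined. Apart from this bookkeeping, the proof is the two-line computation above.
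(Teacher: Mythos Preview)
Your argument is correct, but it takes a different route from the paper's. The paper uses Voiculescu's linearization property directly: from $F_{\mu_{n,n}}^{\langle -1\rangle}(z)-z = 2\bigl(F_{\nu_n}^{\langle -1\rangle}(z)-z\bigr)$ one composes with $F_{\nu_n}$ to get $F_{\mu_{n,n}}^{\langle -1\rangle}(F_{\nu_n}(z)) = 2z - F_{\nu_n}(z)$, and then substitutes $F_{\nu_n}(z)=z-G_{\nu_{n-1}}(z)$ from Lemma~\ref{recurrsiveform}. You instead invoke the subordination formalism, deriving $\psi(\omega(z))=z$ with $\psi(\zeta)=\zeta+G_{\nu_{n-1}}(\zeta)$ and then inverting. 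Both proofs feed in Lemma~\ref{recurrsiveform} at the same point and both must finish with an analytic-continuation step to pass from a cone at infinity to all of $\mf{C}^+$; the difference is purely in which free-probability machine (the $R$-transform inverse versus subordination) does the linearization. Your approach has the mild advantage that the subordination relations are global on $\mf{C}^+$ from the start, so the only local step is the inversion $\omega\circ\psi=\mathrm{id}$; the paper's approach is a line shorter but begins with $F^{\langle -1\rangle}$, which is already only defined locally. One small streamlining of your write-up: rather than arguing that $\omega(\psi(z))=z$ on a cone, you can simply set $w=\omega(z)$ in $\psi(\omega(z))=z$ to read off $F_{\mu_{n,n}}(w+G_{\nu_{n-1}}(w))=w-G_{\nu_{n-1}}(w)$ for all $w$ in the range of $\omega$, and then continue analytically from there.
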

\begin{proof}
Recall \cite{VDN} that $F^{\langle -1 \rangle}(z) - z$ (where $\langle -1 \rangle$ denotes the inverse under composition) linearizes free convolution: on an appropriate domain,
\begin{equation}
\label{Linearization}
F_{\nu \boxplus \nu}^{\langle -1 \rangle} (z) - z
=  2 (F_{\nu}^{\langle -1 \rangle} (z) - z).
\end{equation}
So on such a domain, we have that \begin{align*}
& \ \ \ \ F^{\langle -1 \rangle}_{\mu_{n,n}}(z) = 2F_{\nu_{n}}^{\langle -1 \rangle} - z \\
\Rightarrow & \ \ \ \    F^{\langle -1 \rangle}_{\mu_{n,n}}(F_{\nu_{n}}(z))  = 2z - F_{\nu_{n}}(z) \\
\Rightarrow & \ \ \ \   F^{\langle -1 \rangle}_{\mu_{n,n}}( z - G_{\nu_{n-1}}(z) )  = z + G_{\nu_{n-1}}(z)
\end{align*}
where the last implication follows  from Lemma \ref{recurrsiveform}.  Our claim follows on an appropriate domain, and on all of $\mathbb{C}^{+}$ through continuation.
\end{proof}

We set notation before proving the main result of this section.
For $q < p$, we define $\mathcal{P_{O}}(p,q)$ to denote the set of interval partitions of $\{ 1 , 2 , \ldots , p \}$ into $q$ distinct blocks of odd length.
Given an element $\pi \in \mathcal{P_{O}}(p,q)$ we define $\pi_{i} $ to be the $i$th block, in ascending order, for $i = 1 , 2 , \ldots , q$.
We denote by $|\pi_{i}|$ the number of elements in  this interval.

The following theorem shows that the  formula for the Cauchy transform in Corollary \ref{subordination} translates to a recursive formula for the convolved measures $\mu_{k,k}$.  This is proven by stripping the coefficients of the Cauchy transform $G_{\mu_{k,k}}$.

\begin{theorem}\label{momentformula}
Let $M^{(k)}_{n}$ and $m^{(\ell)}_{n}$ denote the $nth$ moments for the measures $\mu_{k,k}$ and $\nu_{\ell}$, respectively.
The measure $\mu_{k,k}$ is symmetric and we have the following recursive formula for the even moments:

\begin{equation}\label{mess1}
 M^{(k)}_{2n} = S_{n,k} - T_{n,k}
\end{equation}
where
\begin{equation}\label{mess2}
 S_{n,k} = 2\sum_{i=n}^{2n -1}\binom{2n-1}{i} \sum_{\pi \in \mathcal{P_{O}}(i, 2n - i)} m_{|\pi_{1}| - 1}^{(k-1)} m_{|\pi_{2}| - 1}^{(k-1)}  \cdots
m_{|\pi_{2n - i}| - 1}^{(k-1)},
\end{equation}
\begin{equation}\label{mess3}
T_{n,k} =  \sum_{j=1}^{n - 2} M_{2j}^{(k)} \sum_{p= (n-j) - 1 }^{2(n-j) - 1}  \binom{2(n-j)-1}{p} \left[ R_{p+1, j ,n,k}  -  R_{p, j ,n,k} \right],
\end{equation}
and
\begin{equation}\label{mess4}
 R_{p, j ,n,k} =  \left( \sum_{\pi \in \mathcal{P_{O}}(p , 2(n-j) - p)}  (m_{|\pi_{1}| - 1}^{(k-1)} m_{|\pi_{2}| - 1}^{(k-1)}  \cdots
m_{|\pi_{2n - i}| - 1}^{(k-1)})  \right)
\end{equation}
\end{theorem}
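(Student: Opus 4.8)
The plan is to read the recursion off from Corollary~\ref{subordination} by ``stripping'' the Cauchy transform $G_{\mu_{k,k}}$ one Taylor coefficient at a time. Symmetry comes first and is easy: $\nu_k$ is the distribution of the zero-diagonal tridiagonal matrix $X_k$, so all of its Jacobi parameters $\lambda_i$ vanish and hence, by part (c) of Theorem~\ref{Thm;Favard}, all of its odd moments vanish; thus $\nu_k$ is a symmetric measure, and since free convolution of symmetric measures is symmetric, $\mu_{k,k}=\nu_k\boxplus\nu_k$ is symmetric and $M^{(k)}_{2j+1}=m^{(\ell)}_{2j+1}=0$. Write $g=G_{\nu_{k-1}}$. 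Taking reciprocals in Corollary~\ref{subordination} and applying Lemma~\ref{recurrsiveform} to the resulting right-hand side gives the closed functional equation
$$
G_{\mu_{k,k}}\bigl(z+g(z)\bigr)=\frac{1}{z-g(z)}=\frac1z\sum_{m\ge0}\Bigl(\frac{g(z)}{z}\Bigr)^{m}.
$$
Both sides are Cauchy transforms, or reciprocals thereof, of compactly supported measures, so they have genuine asymptotic expansions at $\infty$; writing $g(z)=\sum_{a\ge0}m^{(k-1)}_{2a}z^{-(2a+1)}$ and $G_{\mu_{k,k}}(z)=\sum_{n\ge0}M^{(k)}_{2n}z^{-(2n+1)}$ turns the identity into an equality of formal Laurent series in $1/z$, which we may compare coefficient by coefficient.

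The stripping step. Substituting the series for $G_{\mu_{k,k}}$ on the left and expanding each $\bigl(z+g(z)\bigr)^{-(2n+1)}=z^{-(2n+1)}\bigl(1+g(z)/z\bigr)^{-(2n+1)}$ by the generalized binomial theorem, the coefficient of $z^{-(2n+1)}$ on the left is $M^{(k)}_{2n}$ plus a combination of the $M^{(k)}_{2j}$ with $j<n$, the coefficient of $M^{(k)}_{2j}$ being an alternating binomial sum of products of $m^{(k-1)}_{2a}$'s. On the right, the coefficient of $z^{-(2n+1)}$ in $\frac1z\sum_m(g/z)^m$ is $\sum_{m\ge1}\ \sum_{a_1+\cdots+a_m=n-m}m^{(k-1)}_{2a_1}\cdots m^{(k-1)}_{2a_m}$. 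Encoding each tuple $(a_1,\dots,a_m)$ by the interval partition of $\{1,\dots,\sum_i(2a_i+1)\}$ into consecutive blocks of the odd sizes $2a_1+1,\dots,2a_m+1$ identifies every sum of this kind with a sum over the sets $\mathcal{P_{O}}(p,q)$ weighted by $\prod_\ell m^{(k-1)}_{|\pi_\ell|-1}$; these are exactly the quantities $R_{p,j,n,k}$ of \eqref{mess4} (and, for the right-hand side, the inner sums in \eqref{mess2}). Equating the two coefficients and solving for $M^{(k)}_{2n}$ yields a recursion of the shape $M^{(k)}_{2n}=(\text{sum of }\mathcal{P_{O}}\text{-weights})-(\text{correction linear in }M^{(k)}_{2j},\ j<n)$.

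It remains to recognize this raw recursion as \eqref{mess1}--\eqref{mess4}. The correction initially contains $M^{(k)}_0=1$ and $M^{(k)}_{2(n-1)}$ (the latter with coefficient $-(2n-1)$) alongside the other $M^{(k)}_{2j}$; moving the forced contributions of these two terms into the principal part — using the recursion itself at level $n-1$ for the $M^{(k)}_{2(n-1)}$ term — is what produces the weights $\binom{2n-1}{i}$ and the overall factor $2$ in $S_{n,k}$, while recasting the alternating binomial sums multiplying each surviving $M^{(k)}_{2j}$ in telescoped form produces the differences $R_{p+1,j,n,k}-R_{p,j,n,k}$ and the binomials $\binom{2(n-j)-1}{p}$ in $T_{n,k}$ and forces the upper limit $j=n-2$ there. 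I expect this last bookkeeping — verifying the exact binomial reweightings, the telescoping identities among the $\mathcal{P_{O}}$-sums, and all the summation ranges, with the convention $\nu_1=\delta_0$ anchoring the recursion in $k$ — to be the only real obstacle; the analytic content is entirely in Corollary~\ref{subordination}, and the coefficient comparison itself is immediate.
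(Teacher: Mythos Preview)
Your overall strategy is right and matches the paper's: use Corollary~\ref{subordination} to write $G_{\mu_{k,k}}(z+g)=1/(z-g)$ with $g=G_{\nu_{k-1}}$, then read off coefficients. The gap is in the last paragraph. Your direct expansion of $(z+g)^{-(2j+1)}$ via the negative binomial series produces a perfectly valid recursion for $M_{2n}^{(k)}$, but it is \emph{not} the recursion \eqref{mess1}--\eqref{mess4}: the coefficients of the $M_{2j}^{(k)}$ differ term by term. Your proposed fix --- substitute the level-$(n-1)$ recursion for $M_{2(n-1)}^{(k)}$ and absorb $M_0^{(k)}$ --- does not repair this. Already at $n=3$ one computes $C_{n-1,n}=-(2n-1)=-5$, $C_{0,3}=-m_4^{(k-1)}+2m_2^{(k-1)}-1$, and after moving $M_0$ and substituting $M_4^{(k)}=6+2m_2^{(k-1)}$ the would-be principal part is $32+10m_2^{(k-1)}+2m_4^{(k-1)}$, whereas $S_{3,k}=20+20m_2^{(k-1)}+2m_4^{(k-1)}$. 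Equality only holds after \emph{also} using the numerical value $M_2^{(k)}=2$, and the remaining coefficient of $M_2^{(k)}$ is $-(6-3m_2^{(k-1)})$, not the $-2m_2^{(k-1)}$ dictated by $T_{3,k}$. So your reorganization cannot recover the specific binomials $\binom{2n-1}{i}$, $\binom{2(n-j)-1}{p}$ or the factor $2$ in the stated form.

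What the paper does instead is a small but decisive trick: rather than comparing the coefficient of $z^{-(2n+1)}$ directly, it writes $M_{2n}^{(k)}=\lim_{|z|\to\infty} z^{2n+1}\bigl[G_{\mu_{k,k}}(z)-\sum_{j=0}^{n-1}M_{2j}^{(k)}z^{-(2j+1)}\bigr]$, substitutes $z\mapsto z+g(z)$, and then multiplies by $(z-g)/(z+g)\to1$. This clears the denominator $1/(z-g)$ and converts every $(z+g)^{-(2j+1)}$ into the \emph{positive} power $(z-g)(z+g)^{2(n-j)-1}$, so the ordinary binomial theorem applies and the coefficients $\binom{2n-1}{i}$, $\binom{2(n-j)-1}{p}$ fall out immediately; the factor $2$ comes from $(z+g)^{2n}-(z-g)(z+g)^{2n-1}=2g(z+g)^{2n-1}$. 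The $j=n-1$ term drops automatically because $(z-g)(z+g)=z^2-g^2$ has no constant term --- no recursive substitution is needed. If you insert this multiplication step before extracting coefficients, your argument goes through and lands exactly on \eqref{mess1}--\eqref{mess4}.
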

\begin{proof}
Consider the Cauchy transform \begin{equation}\label{basicCauchy} G_{\mu_{k,k}}(z) = \sum_{p=0}^{\infty} \frac{ M^{(k)}_{2p}}{z^{2p + 1}}.\end{equation}
Rewriting \eqref{basicCauchy} and taking limits, we have
\begin{align}
 M^{(k)}_{2n} &= \lim_{|z| \uparrow \infty} z^{2n + 1} \left[ G_{\mu_{k,k}}(z) - \frac{ M^{(k)}_{0}}{z} + \frac{ M^{(k)}_{2}}{z^{3}} + \cdots + \frac{ M^{(k)}_{2(n-1)}}{z^{2n -1} } \right] \\ \label{Limit1}
&= \lim_{|z| \uparrow \infty} (z + G_{\nu_{k-1}}(z))^{2n + 1} \left[ G_{\mu_{k,k}}((z + G_{\nu_{k-1}}(z))) - \sum_{j=0}^{n-1} \frac{ M^{(k)}_{2j}}{(z + G_{\nu_{k-1}}(z))^{2j+1}} \right] \\ \label{Limit2}
&= \lim_{|z| \uparrow \infty} \left[ \frac{z - G_{\nu_{k-1}}(z)}{z +G_{\nu_{k-1}}(z)} \right] (z + G_{\nu_{k-1}}(z))^{2n+1} \left[ \frac{1}{z - G_{\nu_{k-1}}(z)} - \sum_{j=0}^{n-1} \frac{ M^{(k)}_{2j}}{(z + G_{\nu_{k-1}}(z))^{2j+1}} \right] \\ \label{Limit3}
& = \lim_{|z| \uparrow \infty} \left[ (z + G_{\nu_{k-1}}(z))^{2n} - (z - G_{\nu_{k-1}}(z))(z + G_{\nu_{k-1}}(z))^{2n-1} \right] \\ \nonumber
& + [ M^{(k)}_{2}(z - G_{\nu_{k-1}}(z))(z + G_{\nu_{k-1}}(z))^{2n-3} + M^{(k)}_{4}(z - G_{\nu_{k-1}}(z))(z + G_{\nu_{k-1}}(z))^{2n-5} + \cdots  \\  \nonumber & + M^{(k)}_{2(n-1)}(z - G_{\nu_{k-1}}(z))(z + G_{\nu_{k-1}}(z)) ]
\end{align}
where equality \eqref{Limit1} is justified since $|z + G_{\nu_{k-1}}(z)| \uparrow \infty$ as $|z| \uparrow \infty$.  Equality \eqref{Limit2} is justified since
this is a product of convergent limits and
$$ \lim_{|z| \uparrow \infty} \frac{z - G_{\nu_{k-1}}(z)}{z +G_{\nu_{k-1}}(z)} = 1$$
as well as Corollary \ref{subordination}.
 Since convergence of \eqref{Limit3} is established, we need only identify the constant terms to identify the limit.
We break this into two pieces, letting
$$S(z) = (z + G_{\nu_{k-1}}(z))^{2n} - (z - G_{\nu_{k-1}}(z))(z + G_{\nu_{k-1}}(z))^{2n-1} $$
and setting the remaining terms in \eqref{Limit3} equal to $T(z)$.
We will establish our theorem by showing that the constant term for $S(z)$ is equal to $S_{n,k}$ and the constant term for $T(z)$ is equal to $T_{n,k}$.

We begin with $S(z)$.  Observe that
$$S(z) = 2G_{\nu_{k-1}}(z) (z + G_{\nu_{k-1}}(z))^{2n-1}= 2\sum_{i=0}^{2n - 1} \binom{2n-1}{i} z^{i}  G_{\nu_{k-1}}(z)^{2n - i }   $$
Now, isolating $  z^{i}  G_{\nu_{k-1}}(z)^{2n - i } $, our task devolves to identifying the constant term of this Laurent series.  As $G_{\nu_{k-1}}(z)^{2n - i } = O(z^{i-2n})$, we only receive contributions for  $i \geq n$ so that we focus on
$$ 2\sum_{i=n}^{2n - 1} \binom{2n-1}{i} z^{i}  G_{\nu_{k-1}}(z)^{2n - i }.   $$
Now observe that
$$G_{\nu_{k-1}}(z)^{2n - i } = \left( \sum_{p=0}^{\infty} \frac{m^{(k-1)}_{p}}{z^{p+1} }  \right)^{2n-i}$$
and we must identify the coefficient of the $z^{-i}$ term.  But this is exactly
$$\sum_{\pi \in \mathcal{P_{O}}(i, 2n - i)} m_{|\pi_{1}| - 1}^{(k-1)} m_{|\pi_{2}| - 1}^{(k-1)}  \cdots
m_{|\pi_{2n - i}| - 1}^{(k-1)} $$
since, given $\pi \in \mathcal{P_{O}}(i, 2n - i)$, by definition $\pi = \pi_{1} \cup \pi_{2} \cup \cdots \cup \pi_{2n - i}$
and $$z^{-|\pi_{1}|} z^{-|\pi_{2}|} \cdots z^{-|\pi_{2n-i }|} = z^{-( |\pi_{1}| + |\pi_{2}| + \cdots + |\pi_{2n-i }|) } = z^{-|\pi|} = z^{-i}$$
where the last equality also follows from the definition of $ \mathcal{P_{O}}(i, 2n - i)$.
Assembling the pieces, we have that the constant term of $S(z)$ is equal to $S_{n,k}$ , proving our first claim.

Our second claim is that the constant term for $T(z)$ is exactly $T_{n,k}$.
Observe that
$$T(z) =  \sum_{j=1}^{n-1}  M^{(k)}_{2j}(z - G_{\nu_{k-1}}(z))(z + G_{\nu_{k-1}}(z))^{2(n-j) - 1}.$$
We can immediately discard the $j= n-1$ term since this is equal to
$$ M^{(k)}_{2(n-1)}(z - G_{\nu_{k-1}}(z))(z + G_{\nu_{k-1}}(z)) = M^{(k)}_{2(n-1)}(z^{2} - G_{\nu_{k-1}}(z)^{2}) $$
and this has no constant term.

Isolating a single term for fixed $j$, we have that
\begin{align}
M^{(k)}_{2j}&(z - G_{\nu_{k-1}}(z))(z + G_{\nu_{k-1}}(z))^{2(n-j) - 1} \nonumber \\ & = M^{(k)}_{2j}(z - G_{\nu_{k-1}}(z)) \sum_{p=0}^{2(n-j) - 1} \binom{2(n-j) - 1}{p}  z^{p}G_{\nu_{k-1}}(z)^{2(n-j) - 1 - p } \nonumber \\ \label{Targ1}
&= M^{(k)}_{2j} \sum_{p=0}^{2(n-j) - 1} \binom{2(n-j) - 1}{p} \left[ z^{p+1}G_{\nu_{k-1}}(z)^{2(n-j) - 1 - p } - z^{p}G_{\nu_{k-1}}(z)^{2(n-j)  - p } \right]
\end{align}
Defining $R_{p,j}(z) := z^{p}G_{\nu_{k-1}}(z)^{2(n-j)  - p }$
we isolate a single term for fixed $p,j$
$$  z^{p+1}G_{\nu_{k-1}}(z)^{2(n-j) - 1 - p } - z^{p}G_{\nu_{k-1}}(z)^{2(n-j)  - p } = R_{p+1,j}(z) - R_{p,j}(z).$$
As in the case with $S(z)$, this will have no constant term unless $p+1 \geq 2(n-j)-(p+1)$ so that we may restrict the range in \ref{Targ1} to $p \geq n-j-1$ (for $p = n-j-1$, the term $R_{p+1 , j}(z)$ is generally non-zero whereas the constant term of $R_{p, j}(z)$ is equal to $ 0$  .
Moreover, also arguing as in the case of $S(z)$, we see that the constant term of $R_{p,j}(z)$ is equal to
$$\sum_{\pi \in \mathcal{P_{O}}(p, 2(n-j) - p)} m_{|\pi_{1}| - 1}^{(k-1)} m_{|\pi_{2}| - 1}^{(k-1)}  \cdots
m_{|\pi_{ 2(n-j) - p}| - 1}^{(k-1)} = R_{p,j,n,k} .$$
Putting the pieces together, we have proven our second claim and, therefore, the theorem.
\end{proof}

\section{Concrete Examples}\label{section5}
We now establish additional concrete results based on the Theorems proven in the previous sections.
We begin by calculating the values for $|\mathcal{TCNC}_{2}^{k,k}(n)|$ based on the recursive algorithm in Theorem \ref{momentformula}.

\begin{equation}\label{TCNCarray}
\begin{array}{c|c|c|c|c|c|c}
\ & n=2 & n=4 & n=6 & n=8 & n=10 & n=12 \\
\hline
k=2  & 2 & 6 & 20 & 70 & 252 & 924 \\
\hline
k=3  & 2 & 8 & 38 & 196 & 1062 & 5948  \\
\hline
k=4 & 2 & 8 & 40 & 222 & 1308 & 8014 \\
\hline
k=5  & 2 & 8 & 40 & 224  & 1342 & 8404 \\
\hline
k=6 & 2 & 8 & 40 & 224  & 1344 & 8446 \\
\hline
k > 6 & 2 & 8 & 40 & 224  & 1344 & 8448 \\
\end{array}
\end{equation}

Going through one of the computations that drives Theorem \ref{momentformula}, we consider $|\mathcal{TCNC}_{2}^{2,2}(6)| = M_{3}^{(2)}$.
Utilizing the same reasoning from equalities \eqref{Limit1} through \eqref{Limit3}, we have that this moment is equal to
\[
\begin{split}
\lim_{|z| \uparrow \infty}
& (z - G_{\nu_{1}}(z)) (z + G_{\nu_{1}}(z))^{6} \\
& \cdot \left(G_{\mu_{2,2}}(z + G_{\nu_{1}}(z)) - \frac{1}{z + G_{\nu_{1}}(z)} - \frac{2}{(z + G_{\nu_{1}}(z))^{3}} - \frac{6}{(z + G_{\nu_{1}}(z))^{5}} \right).
\end{split}
\]
Recalling that $G_{\mu_{2,2}}(z + G_{\nu_{1}}(z)) = z - G_{\nu_{1}}(z)$, we distribute these terms,
\begin{align}
M_{3}^{(2)} =& \lim_{|z| \uparrow \infty}   (z + G_{\nu_{1}}(z))^{6} - (z - G_{\nu_{1}}(z)) (z + G_{\nu_{1}}(z))^{5} \nonumber \\ - &  2 (z - G_{\nu_{1}}(z)) (z + G_{\nu_{1}}(z))^{3} - 6(z - G_{\nu_{1}}(z)) (z + G_{\nu_{1}}(z))
\end{align}
We need only isolate the constant terms.  Once again, $(z - G_{\nu_{1}}(z)) (z + G_{\nu_{1}}(z))$ contributes nothing.
Consider
\begin{align}
(z + G_{\nu_{1}}(z))^{6} - & (z - G_{\nu_{1}}(z)) (z + G_{\nu_{1}}(z))^{5} =  2 G_{\nu_{1}}(z) (z + G_{\nu_{1}}(z))^{5} \\
&=  2 G_{\nu_{1}}(z) (z^{5} + 10z^{4}G_{\nu_{1}}(z)) + 10z^{3}G_{\nu_{1}}^{2}(z) + \cdots)
\end{align}
and note that the  $\cdots$ terms make no contribution to the constant as their degree is too low.
The constant term is equal to $2 [ m^{(1)}_{4} + 5(2m_{0}^{(1)}m_{2}^{(1)}) + 10(m_{0}^{(1)})^{3}]$.
By a similar argument, the term
$$ 2 (z - G_{\nu_{2}}(z)) (z + G_{\nu_{2}}(z))^{3}$$ contributes
$-2(2m_{2}^{(1)})$ to the constant. Now, since $\nu_{1} = \delta_{0}$, we have $m_{0}^{(1)} = 1$ and $m_{i}^{(1)} = 0$ for all $i > 0$.
Thus, the only contributing term is $20(m_{0}^{(1)})^{3} = 20$, matching Figure $1$ and our table above.


\begin{Example}\label{simplest}
We isolate a special case of Proposition \ref{Bernoulli} as it is a simple concrete example of a non-commutative convolution that can be computed through the traditional Cauchy transform methodology.

Let $E: M_{2}(\mathbb{C}) \mapsto \mathcal{D}$ denote the non-commutative probability space generated by the conditional expectation of $M_{2}$ onto the diagonal subalgebra.  Let $X = e_{1,2} + e_{2,1}$.  Observe that for $b = \lambda e_{1,1} + \gamma e_{2,2}$ we have that
$$XbX = \alpha(b) = \gamma e_{1,1} + \lambda e_{2,2} \in \mathcal{D}$$
so that the hypotheses of Proposition \ref{Bernoulli} are satisfied.
We let $\mu$ denote the distribution of $X$.

Calculating the various transforms, we have
\begin{equation}
G_{\mu}( b) = \sum_{n=0}^{\infty} \left( \begin{array}{cc}
[\lambda(\gamma \lambda)^{n}]^{-1} & 0 \\
0 & [\gamma( \lambda \gamma)^{n}]^{-1} \\
\end{array} \right) = \left( \begin{array}{cc}
\frac{1}{\lambda - \gamma^{-1}} & 0 \\
0  &  \frac{1}{\gamma - \lambda^{-1}} \\
\end{array} \right)
\end{equation}
\begin{equation}
 F_{\mu}(b) = \left( \begin{array}{cc}
\lambda - \frac{1}{\gamma} & 0 \\
0 & \gamma - \frac{1}{\lambda} \\
\end{array} \right)
\end{equation}
\begin{equation}
F_{\mu}^{\langle -1 \rangle} (b) = \left( \begin{array}{cc}
\frac{1}{2} \left[\lambda  + \sqrt{ \lambda^{2} + 4\frac{\lambda}{\gamma} } \right] & 0 \\
0 & \frac{1}{2} \left[\gamma  + \sqrt{ \gamma^{2} + 4\frac{\gamma}{\lambda} } \right]  \\
\end{array} \right)
\end{equation}
Utilizing the operator-valued version of linearizing property \eqref{Linearization} proved in \cite{Voiculescu1,Popa-Vinnikov-NC-functions}, that is,
$$F_{\mu \boxplus \mu}^{\langle -1 \rangle} (b) =
 2F_{\mu }^{\langle -1 \rangle} (b) - b  $$
we conclude that
\begin{equation}
F_{\mu \boxplus \mu}^{\langle -1 \rangle} (b) = \left( \begin{array}{cc}
\sqrt{ \lambda^{2} + 4\frac{\lambda}{\gamma} }  & 0 \\
0 & \sqrt{ \gamma^{2} + 4\frac{\gamma}{\lambda} }   \\
\end{array} \right).
\end{equation}
Taking the compositional inverse, we have
\begin{equation}
 F_{\mu \boxplus \mu}(b) = \left( \begin{array}{cc}
\sqrt{\lambda^{2} - \frac{4\lambda}{\gamma}}  & 0 \\
0 & \sqrt{\gamma^{2} - \frac{4\gamma}{\lambda}}  \\ \end{array} \right).
\end{equation}
Letting $\lambda = \gamma = z$, the entries are precisely the $ F$-transform of the arcsine distribution.
This, coupled with observation \eqref{Bernoulli2} in Proposition \ref{Bernoulli} allows to reprove the main result in that proposition from more basic principles in this special case.
\end{Example}

\begin{Example}\label{counterexample}
We construct examples of Jacobi-Szeg\H{o} distributions $\mu_{1}$ and $\mu_{2}$ such that $\mu_{1} \boxplus \mu_{2}$ is not a Jacobi-Szeg\H{o} distribution.

Indeed, let $\mu_{1}$ and $\mu_{2}$ be symmetric Bernoulli distributions with respective morphisms $\alpha_{1}$ and $\alpha_{2}$.  That is,
\begin{equation}
\mu_{i} = J
\begin{pmatrix}
0, & 0, & 0, & 0, & \ldots \\
\alpha_{i} , & 0, & 0, & 0, & \ldots
\end{pmatrix}.
\end{equation}

We assume that
\begin{equation}
\mu = J
\begin{pmatrix}
0, & 0, & 0, & 0, & \ldots \\
\beta_1, & \beta_2, & \beta_3, & \beta_4, & \ldots
\end{pmatrix}.
\end{equation}
satisfies $\mu = \mu_{1} \boxplus \mu_{2}$
and show that $\alpha_{1}$ and $\alpha_{2}$ may be chosen so that this precipitates a contradiction.

By definition of the Jacobi parameters,
\begin{align}
\mu(Xb_{0}X) &= \beta_{1}(b_{0}), \label{auto1} \\
\mu(Xb_{1}Xb_{2}Xb_{3}X) &= \beta_{1}(b_{1} \beta_{2}(b_{2}) b_{3}) + \beta_{1}(b_{1})b_{2}\beta_{1}(b_{3}) \label{auto3}.
\end{align}
On the other hand, according to Theorem \ref{maintheorem},
\begin{align}
  \mu_{1} \boxplus \mu_{2} (Xb_{0}X) &= \alpha_{1}(b_{0}) + \alpha_{2}(b_{0}), \label{auto2} \\
 \mu_{1} \boxplus \mu_{2}(Xb_{1}Xb_{2}Xb_{3}X) &= \alpha_{1}( b_{1}\alpha_{2}(b_{2})b_{3} ) + \alpha_{2}( b_{1}\alpha_{1}(b_{2})b_{3} ) \label{auto4}
+\alpha_{1}(b_{1})b_{2}\alpha_{1}(b_{3}) \\ & \nonumber  + \alpha_{1}(b_{1})b_{2}\alpha_{2}(b_{3}) + \alpha_{2}(b_{1})b_{2}\alpha_{1}(b_{3}) +\alpha_{2}(b_{1})b_{2}\alpha_{2}(b_{3}).
\end{align}

Now, \eqref{auto1} and \eqref{auto2} combine to imply that $\beta_{1} = \alpha_{1} + \alpha_{2}$.  At this point, equality of expressions \eqref{auto3} and \eqref{auto4} becomes completely untenable in most non-commutative settings.  For example, letting $\alpha_{1} = \alpha$ from Example \ref{simplest}, $\alpha_{2} = I$ and $ b_{1} = b_{2} = b_{3} = e_{1,1}$ we obtain an easy contradiction.
\end{Example}



\def\cprime{$'$} \def\cprime{$'$}
\providecommand{\bysame}{\leavevmode\hbox to3em{\hrulefill}\thinspace}
\providecommand{\MRhref}[2]{%
  \href{http://www.ams.org/mathscinet-getitem?mr=#1}{#2}
}
\providecommand{\href}[2]{#2}

\end{document}